\DeclareMathAlphabet{\othercal}{U}{eus}{m}{n}
\title{Morita classes of microdifferential algebroids}
\author[A. D'Agnolo]{Andrea D'Agnolo}
\address{Dipartimento di Matematica Pura ed Applicata\\ 
Universit\`a di Padova\\ 
via Trieste 63\\
35121 Padova, Italy}
\email{dagnolo@math.unipd.it}
\author[P. Polesello]{Pietro Polesello} 
\address{Dipartimento di Matematica Pura ed Applicata\\ 
Universit\`a di Padova\\ 
via Trieste 63\\
35121 Padova, Italy}
\email{pietro@math.unipd.it}
\thanks{Partial support from the Fondazione Cariparo through the
project ``Differential methods in Arithmetic, Geometry and Algebra'', 
from Padova University through the grant CPDA122824/12, 
and from E.S.I. program ``Higher Structures in Mathematics and Physics" are acknowledged.
The first author wishes to thank R.I.M.S. of Kyoto University for
kind hospitality during the preparation of this paper.
The second author wishes to thank E.S.I. of Vienna and Paris 6 University for
kind hospitality during the preparation of this paper.}
\dedicatory{Dedicated to the memory of Louis Boutet de Monvel}   
\subjclass[2010]{58J15, 53C56, 32C38}
\keywords{Morita equivalence, algebroid stacks, microdifferential operators,  quantization, contact manifolds.}
\begin{document}

\maketitle

\begin{abstract} 
Projective cotangent bundles of complex manifolds are the local models of
complex contact manifolds. Such bundles are quantized by the algebra of
microdifferential operators (a localization of the algebra of differential
operators on the base manifold).

Kashiwara proved that any complex contact manifold $X$ is quantized by a
canonical microdifferential algebroid (a linear stack locally equivalent to an
algebra of microdifferential operators). Besides the canonical one, there can
be other microdifferential algebroids on $X$. Our aim is to classify them. More precisely,
let $Y$ be the symplectification of $X$. We prove that Morita (resp.
equivalence) classes of microdifferential algebroids on $X$ are described by
$H^2(Y;\C^\times)$. We also show that any linear stack locally equivalent to a
stack of microdifferential modules is in fact a stack of modules over a
microdifferential algebroid.

To obtain these results we use techniques of microlocal calculus, non-abelian cohomology and Morita theory for linear stacks.
\end{abstract}

\setcounter{tocdepth}{1}
\tableofcontents

\section*{Introduction}

In recent years there has been a lot of interest in the study of the deformation-quantizations of complex/algebraic symplectic manifolds, or more generally Poisson manifolds
(see for example \cite{Kon01,PS04,BGNT07,Pol08,CH11,BGNT11,KS12,Yek12}).
We are interested here in the study of (non-formal) quantizations of complex contact manifolds, which are locally modeled on the algebra of microdifferential operators of~\cite{S-K-K}, as initiated by Kashiwara~\cite{Kas96} (we refer to~\cite{PS04,KR08,DK11} for the relations between such quantizations and deformation-quantizations).

Let us describe such quantizations in a few more details.

Let $X$ be a complex contact manifold. By Darboux theorem, a local
model of $X$ is an open subset of the projective cotangent bundle $P^*M$
of a complex manifold $M$. Let $\she_{P^*M}$ be the sheaf of
microdifferential operators on $P^*M$ (a localization of the algebra of differential
operators on $M$). A microdifferential algebra ($\she$-algebra, for short) 
on
$X$ is a sheaf of $\C$-algebras locally isomorphic to $\she_{P^*M}$.

To quantize $X$ in the strict sense means to endow it with an $\she$-algebra.
(The relation between quantization and microdifferential operators is classical, see for example~\cite{Bou99,Pol15} for details.)
This might not be possible in general. 
However, Kashiwara~\cite{Kas96} proved that $X$ is endowed
with a canonical $\she$-algebroid $\stke_X$. This means the
following.
To an algebra $A$ one associates the linear category with one object
and elements of $A$ as its endomorphisms. Similarly, to a sheaf of algebras on $X$
one associates a linear stack. 
An $\she$-algebroid on $X$ is a $\C$-linear stack locally equivalent to one associated with an
$\she$-algebra. 
In this sense, a quantization of $X$ is the datum of an $\she$-algebroid on $X$.

Having to deal with an algebroid instead of an algebra is not very restrictive. 
For example, it makes sense to consider coherent modules over an $\she$-algebroid, and in particular regular modules along complex involutive subvarieties of $X$. The Lagrangian case is of particular interest, since these modules are the counterpart of microlocal perverse sheaves in the Riemann-Hilbert correspondence (see \cite{Kas96,Was05,DP05,DS07}).
A very interesting example of non-coherent module is the (twisted)
sheaf of microfunctions along a totally real, I-symplectic Lagrangian submanifold (see~\cite[\S4]{Kas96}).

A natural problem is to classify $\she$-algebroids on $X$.

The canonical $\she$-algebroid $\stke_X$ is endowed with an anti-involution,
corresponding to the operation of taking the formal adjoint of microdifferential operators. It is also endowed with a natural order filtration, and its associated graded algebroid is trivial. 
It is shown in~\cite{Pol07} that $\stke_X$ is unique among such $\she$-algebroids. 

Equivalence classes of filtered $\she$-algebroids with trivial graded (but not necessarily endowed with an anti-involution) are classified in~\cite{Pol15}.

In this paper, we classify arbitrary $\she$-algebroids on $X$.
These include in particular filtered 
$\she$-algebroids whose associated graded is non-trivial
(what would be called twisted quantizations of $X$ in the terminology of~\cite{BGNT07,Yek12}).

We also classify stacks of twisted $\she$-modules, i.e.~stacks locally equivalent to the stack of modules over an $\she$-algebra.
These are the most general stacks where the notion of microdifferential modules makes sense. 
Roughly, they are obtained by patching sheaves of $\she$-algebras  through Morita equivalences.

More precisely, in Theorems~\ref{thm:Emoritatrivial} and~\ref{thm:class}, and Corollary~\ref{cor:class}, we prove the following results:
\begin{itemize}
\item[(i)]
Two $\she$-algebroids are equivalent if and only
if they are Morita equivalent, i.e.~their stacks of modules are equivalent.
\item[(ii)]
Any stack of twisted $\she$-modules is globally equivalent to the stack of modules
over an $\she$-algebroid.
\item[(iii)]
The set of equivalence classes (resp. Morita classes) of $\she$-algebroids is canonically isomorphic to $H^2(Y;\C^\times_Y)$, for $Y$ the symplectification of $X$.
\item[(iv)]
The group of invertible $\stke_X$-bimodules is isomorphic to $H^1(Y;\C^\times_Y)$.
\end{itemize} 

We also give explicit realizations of the isomorphisms in (iii) and (iv).

\smallskip

To obtain our results, we use techniques  of microlocal calculus, non-abelian cohomology and  Morita theory for linear stacks.

\smallskip

The content of this paper is as follows.

In Section~\ref{se:nonab} we collect, without proofs, the main facts of non-abelian cohomology we need to prove our results. Note that cohomology with values in non-abelian groups was already used in~\cite{Bou99} to classify $\she$-algebras, and cohomology with values in 2-groups is used in~\cite{Pol08,Pol15} for the classification of algebroids.

In Section~\ref{se:alg} we give the basics of the theory of algebroids.

In Section~\ref{se:Mor},  we detail Morita theory for linear stacks. In particular, the notion of Picard good stacks allows us to recover results of~\cite{Low08}. Morita theory for linear categories is developed in~\cite{Mit65,Mit85}. The case of stacks of modules over sheaves of algebras is discussed in~\cite{KS06} (see also~\cite{DP04}).

In Section~\ref{se:mic} we recall some results from the theory of microdifferential operators. In particular, we provide a detailed proof of an unpublished result, due to Kashiwara, on the structure of invertible bimodules (see Theorem~\ref{thm:Eeloc} below). This allows us to prove the key Theorem~\ref{thm:localEPicardGood}.

In Section~\ref{se:results} we prove our main classification results, as (i)--(iv) above.

In Appendix~\ref{se:coc} we recall the cocycle description of algebroids and functors between them.

\smallskip

For symplectic manifolds, or more generally for Poisson manifolds, some results related to ours appeared in the literature: on a complex symplectic manifold, deformation quantization algebroids with anti-involution and trivial graded  have been classified up to equivalence in~\cite{Pol08} (see also \cite{BGNT07,BGNT11} for the possibly twisted case), whereas Morita-type results for deformation quantization algebras are obtained in~\cite{Bur02,BW04,BDW09} for real Poisson manifolds, and in~\cite{Yek10} in the algebraic setting.

\subsection*{Acknowledgments}
We express our gratitude to Masaki Kashiwara for useful discussions and
for communicating Theorem~\ref{thm:Eeloc} to us.
We would like to thank the referee for pointing out a mistake in a previous version of this paper.

\subsection*{Convention} In this text, when dealing with categories and stacks, we will
not mention any smallness condition (with respect to a given universe), leaving to the
reader the task to make it precise when necessary.

\section{Non-abelian cohomology}\label{se:nonab}

We are interested in classifying $\she$-algebroids and stacks of twisted $\she$-modules.
Thanks to the existence of a canonical $\she$-algebroid, 
this amounts to classify stacks locally
equivalent to a given one. To this end, we recall here some
techniques of cohomology with values in a stack of
$2$-groups. 

The classical reference for stacks is~\cite{Gir71}. 
A quick introduction can be found in~\cite{DP04} and~\cite[chapter~19]{KS06}.

For stacks of $2$-groups we refer to~\cite{Bre94} (where the term $gr$-stack is used instead) 
and to~\cite[\S1.4]{Del73} for the strictly commutative case. See also \cite{AN09} for an explicit description in terms of crossed modules. We follow the presentation of~\cite{Pol08}.

\medskip
Let $X$ be a topological space (or a site).

\subsection{Stacks}\label{se:linearstacks}
A prestack $\stkc$ on $X$ is a lax presheaf of categories. 
It is lax in the sense that for a chain of three open
subsets $W\subset V\subset U$ the restriction functor $\cdot |_W \colon \stkc(U)\to \stkc(W)$
coincides with the composition $\stkc(U)\to[\cdot |_V] \stkc(V)\to[\cdot |_W] \stkc(W)$ only up to
an invertible transformation (such transformations need to satisfy a natural cocycle
condition for chains of four open subsets). 

For $\objc,\objc'\in\stkc(U)$, denote by $\shHom[\stkc](\objc,\objc')$
the presheaf on $U$ given by $U \supset V\mapsto
\Hom[\stkc(V)](\objc|_V,\objc'|_V)$. 
One says that $\stkc$ is a separated prestack if $\shHom[\stkc](\objc,\objc')$
is a sheaf for any $\objc,\objc'$.
A stack on $X$ is a separated prestack satisfying a natural descent
condition, analogous to that for sheaves.

Given a stack $\stkc$, we denote by $\pi_0(\stkc)$ the sheaf associated to the presheaf
 $X \supset U\mapsto \{\text{isomorphism classes of objects in }\stkc(U)\}$, and by 
$\stkFun(\stkc,\stkc')$, for another stack $\stkc'$, the stack whose objects on $U\subset X$ are functors from $\stkc|_U$ to $\stkc'|_U$ and whose morphisms are transformations.

Let $\varphi\colon Y \to X$ be a continuous map (or a morphism of sites). For $\stkd$ a stack 
on $Y$ and $\stkc$ a stack on $X$, we denote by $\oim \varphi \stkd$ and $\opb \varphi \stkc$ the 
stack-theoretical direct and inverse image, respectively. 
Recall that $\opb \varphi \stkc$ is the stack on $Y$ associated to the separated prestack 
$\varphi^{+}\stkc$, defined on an open subset $V \subset Y$ by the category
\begin{align*}
\operatorname{Ob}(\varphi^{+}\stkc (V)) &= \DUnion_{U\supset \varphi(V) \atop U \text{
open} }\operatorname{Ob}(\stkc (U)),\\
\Hom[\varphi^{+}\stkc (V)](\objc_{U},\objc_{U'}) &= \sect(V;
\opb \varphi \shHom[\stkc](\objc_U|_{U \cap U'},\objc_{U'}|_{U \cap U'})).
\end{align*}
One checks that there is a natural equivalence (in fact, a $2$-adjunction)
\begin{equation}\label{eq:2adj}
\oim \varphi \stkFun(\opb \varphi \stkc,\stkd) \equi[]
\stkFun(\stkc,\oim \varphi \stkd).
\end{equation}
Hence there are adjunction functors
$$
\stkc \to \oim \varphi \opb \varphi \stkc, \qquad \opb \varphi \oim \varphi \stkd \to \stkd.
$$
By using the first adjunction, one gets an isomorphism of sheaves
\begin{equation}\label{eq:inv-pi_0}
\opb \varphi \pi_0(\stkc)\isoto  \pi_0(\opb \varphi\stkc).
\end{equation}

\subsection{Stacks of $2$-groups}\label{se:2-groups}

Let $\stkc$ be a stack on $X$. Denote by $\stkAut(\stkc)$ the substack of $\stkFun(\stkc,\stkc)$ whose objects are the auto-equivalences of $\stkc$, and whose morphisms are the {\em invertible} transformations.
For $\shu=\{U_{i}\}_{i\in I}$ an open cover of $X$, set
\[
U_{ij} = U_{i} \cap U_{j},\quad U_{ijk} = U_{i} \cap U_{j} \cap
U_{k},\quad\text{etc.}
\]
Proposition~\ref{pr:patch} for
$\stkc_i=\stkc'_i=\stkc|_{U_i}$ describes how to patch objects and morphisms
of $\stkAut(\stkc)$. With notations as in~Proposition~\ref{pr:patch}, let $H^1(\shu;
\stkAut(\stkc))$ be the pointed set of equivalence classes of pairs
$\pair{\functf_{ij}}{\transfa_{ijk}}{ijk\in I}$ satisfying the cocycle
condition~\eqref{eq:alpha}, modulo the coboundary relation given by 
pairs $\pair{\functg_i}{\transfb_{ij}}{ij\in J} $ satisfying~\eqref{eq:alpha2}. 
As we recall in Remark~\ref{rk:ref}, an open cover $\shv$ finer than $\shu$ induces a well defined map
$H^1(\shu; \stkAut(\stkc)) \to H^1(\shv; \stkAut(\stkc))$.
Hence one sets
\begin{equation}\label{eq:cohAutA}
H^1(X; \stkAut(\stkc)) = \ilim[\shu]H^1(\shu; \stkAut(\stkc)).
\end{equation}

By Proposition~\ref{pr:patch} it follows immediately
\begin{corollary}\label{cor:h1aut}
The pointed set $H^1(X; \stkAut(\stkc))$ is isomorphic to the pointed set of equivalence classes of stacks locally equivalent to $\stkc$.
\end{corollary}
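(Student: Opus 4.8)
The plan is to construct mutually inverse maps between the two pointed sets, both obtained from Proposition~\ref{pr:patch} applied with $\stkc_i = \stkc'_i = \stkc|_{U_i}$. Start with the map from cohomology to stacks. Given an open cover $\shu = \{U_i\}_{i\in I}$ of $X$ and a cocycle $\pair{\functf_{ij}}{\transfa_{ijk}}{ijk\in I}$ satisfying \eqref{eq:alpha}, Proposition~\ref{pr:patch} patches the local stacks $\stkc|_{U_i}$ into a stack $\stkd$ on $X$, together with equivalences $\varepsilon_i \colon \stkd|_{U_i} \simeq \stkc|_{U_i}$; in particular $\stkd$ is locally equivalent to $\stkc$. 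Two cohomologous cocycles --- related by a coboundary as in \eqref{eq:alpha2} --- give rise to equivalent patched stacks, again by Proposition~\ref{pr:patch}, and passing from $\shu$ to a finer cover restricts the $\varepsilon_i$ without altering $\stkd$ up to equivalence. Hence the assignment factors through the filtered colimit \eqref{eq:cohAutA} and yields a well-defined map from $H^1(X;\stkAut(\stkc))$ to the set of equivalence classes of stacks locally equivalent to $\stkc$.

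Conversely, let $\stkd$ be a stack locally equivalent to $\stkc$, and choose an open cover $\shu = \{U_i\}_{i\in I}$ together with equivalences $\varepsilon_i \colon \stkd|_{U_i} \simeq \stkc|_{U_i}$ and chosen quasi-inverses $\varepsilon_i^{-1}$. On $U_{ij}$ put $\functf_{ij} = \varepsilon_i \circ \varepsilon_j^{-1}$, an auto-equivalence of $\stkc|_{U_{ij}}$; on $U_{ijk}$, inserting the counit $\varepsilon_j^{-1} \circ \varepsilon_j \simeq \mathrm{id}$ into $\varepsilon_i \circ \varepsilon_j^{-1} \circ \varepsilon_j \circ \varepsilon_k^{-1}$ produces an invertible transformation $\transfa_{ijk}$ relating $\functf_{ij} \circ \functf_{jk}$ and $\functf_{ik}$. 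Coherence of the chosen units and counits makes $\pair{\functf_{ij}}{\transfa_{ijk}}{ijk\in I}$ satisfy \eqref{eq:alpha}. Changing the cover, the equivalences $\varepsilon_i$, or the quasi-inverses modifies this pair by a coboundary \eqref{eq:alpha2}; and an equivalence $\stkd \simeq \stkd'$, composed with the respective local trivializations on a common refinement, exhibits the two cocycles as cohomologous. Thus we obtain a well-defined map in the opposite direction, clearly pointed since the trivial cocycle corresponds to $\stkc$ itself.

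That these two maps are mutually inverse is a matter of unwinding the constructions: re-extracting descent data from a patched stack $\stkd$ recovers the original cocycle up to coboundary, because one may take the patching equivalences $\varepsilon_i$ as the chosen trivializations; conversely, re-patching the cocycle extracted from a stack $\stkd$ yields, by the universal property in Proposition~\ref{pr:patch}, a stack canonically equivalent to $\stkd$. The main obstacle is the $2$-categorical bookkeeping: one must keep track of all the invertible transformations --- units, counits, associativity and unit constraints of the auto-equivalences --- so that every ``only up to equivalence'' is witnessed by a specific $2$-morphism, and check that these assemble exactly into the identities \eqref{eq:alpha} and \eqref{eq:alpha2}. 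Once Proposition~\ref{pr:patch} is granted, this is routine though notationally heavy, which is why the statement is phrased as a corollary.
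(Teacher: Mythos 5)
Your proposal is correct and follows exactly the route the paper intends: the paper offers no written proof beyond ``By Proposition~\ref{pr:patch}, it follows'', and your two maps (patching a cocycle into a stack via part~(i), and extracting a cocycle from local trivializations and chosen quasi-inverses, with parts~(ii)--(iii) handling coboundaries and uniqueness) are precisely the unwinding of that proposition; the same extraction procedure is spelled out by the authors in Appendix~\ref{se:acocy} for algebroids. No gaps.
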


Let us recall how to make the construction~\eqref{eq:cohAutA} functorial.

\medskip

A $2$-group (also called a \emph{gr}-category in~\cite{Bre94,AN09}) is a category endowed with a group structure both on morphisms and on objects. More precisely, a category $\catg$ is a $2$-group if it is a groupoid  (i.e.~all morphisms are invertible) and it has a structure
$(\catg,\otimes,\mathbf{1})$ of monoidal category such that each object $\objc\in\catg$ has a chosen (right) inverse.
Functors of $2$-groups and transformations between them are monoidal functors and monoidal transformations, respectively.

A stack $\stkg$ is called a stack of $2$-groups (a \emph{gr}-stack in~\cite{Bre94,AN09}) if its categories of sections are $2$-groups, its restrictions are functors of $2$-groups and its
transformations between restriction functors are monoidal. Functors of stacks
of $2$-groups are functors of monoidal stacks.

Recall that one sets $\pi_1(\stkg) = \shHom[\stkg](\mathbf{1},\mathbf{1})$. Both $\pi_0(\stkg)$ and 
$\pi_1(\stkg)$ are sheaves of groups, the latter being necessarily commutative. 
Any functor of stacks of $2$-groups induces a group homomorphism at the level of $\pi_1$ and $\pi_0$.

\begin{example}\label{exa:g01}
For $\shg$ a sheaf of groups, denote by $\shg[0]$ the stack obtained by
trivially enriching $\shg$ with identity arrows, and by $\shg[1]$ the stack of right
$\shg$-torsors. Then $\shg[0]$ is a stack of $2$-groups, and $\shg[1]$ is a
stack of $2$-groups if and only if $\shg$ is commutative.
\end{example}

Another example of stack of $2$-groups is given by $\stkAut(\stkc)$ for a stack $\stkc$.
Let $\stkg$ be a stack of $2$-groups and $\shu$ an open cover of $X$. 
One can extend as follows the construction~\eqref{eq:cohAutA}, where one should 
read ``$\otimes$'' instead of ``$\circ$'' in all diagrams in Appendix~\ref{se:cocy}.

A $1$-cocycle with values in $\stkg$ is a pair $\pair{\objc_{ij}}{a_{ijk}}{ijk\in I}$ with
$\objc_{ij}\in\stkg(U_{ij})$ and $a_{ijk}\in
\Hom[\stkg(U_{ijk})](\objc_{ik},\objc_{ij}\tens \objc_{jk})$ satisfying
\eqref{eq:alpha}. Two such 1-cocycles $\pair{\objc_{ij}}{a_{ijk}}{ijk\in
I}$ and $\pair{\objc'_{ij}}{a'_{ijk}}{ijk\in I}$ are cohomologous if
there is a pair $\pair{\objd_i}{b_{ij}}{ij\in I}$ with
$\objd_i\in\stkg(U_i)$ and $b_{ij}\in\Hom[\stkg(U_{ij})]( \objc'_{ij}\tens \objd_{j}, \objd_{i}\tens \objc_{ij})$
satisfying \eqref{eq:alpha2}.

The first cohomology pointed set of $\stkg$ on $X$ is given by
\[
H^1(X; \stkg) = \ilim[\shu]H^1(\shu; \stkg),
\]
where $H^1(\shu; \stkg)$ denotes the pointed set of equivalence classes of
$1$-cocycles on $\shu$, modulo the relation of being cohomologous.
One can also define the cohomology in degree 0 and $-1$.
This construction is functorial in the sense that short exact
sequences of 2-groups induce long exact cohomology sequences (in a sense to be made
precise). In particular, equivalent 2-groups have isomorphic cohomologies.

With notation as in Example~\ref{exa:g01} one has
\begin{equation}\label{eq:HiG}
H^1(X; \shg[i]) \simeq H^{1+i}(X; \shg)
\quad\text{for }i=0,1,
\end{equation}
where $\shg$ is assumed to be abelian if $i=1$.
Here, the pointed set $H^1(X; \shg)$ is defined by Cech cohomology and $H^2(X;
\shg)$ is computed using hypercoverings.

\subsection{Crossed modules}\label{se:crossed-mod}
A crossed module is the data
\[
\shg^\bullet = (\shg^{-1} \to[d] \shg^0,\ \delta)
\]
of a complex of sheaves of groups and of a left action $\delta$ of $\shg^0$ on
$\shg^{-1}$ such that for any $f\in\shg^0$ and $a\in\shg^{-1}$
\[
d \circ \delta(f) =\ad(f) \circ d, \qquad
\delta\bigl(d(a)\bigr)=\ad(a),
\]
where $\ad(a)(b) = a b a^{-1}$. A morphism of crossed modules is a morphism
of complexes of sheaves of groups compatible with the left actions.

There is a functorial way of associating to a crossed module a stack of
$2$-groups as follows. For $\shg^\bullet$ a crossed module one denotes by $[\shg^\bullet]$ the
stack of $2$-groups associated to the separated prestack whose objects on $U\subset X$
are sections $f\in\shg^{0}(U)$ and whose morphisms $f\to f'$ are sections
$a\in \shg^{-1}(U)$ satisfying $f' = d(a) f$. Then $[\shg^\bullet]$ is a stack of
$2$-groups, with monoidal structure given by $f\tens g=fg$ at the level of
objects and by $a\tens b = a\delta(f)(b)$ at the level of morphisms, for
$a\colon f\to f'$ and $b\colon g\to g'$.

One checks that there are isomorphisms of groups
$$
\pi_i([\shg^\bullet])\simeq H^{-i}(\shg^\bullet), \qquad i=0,\,1
$$
and, with notation and conventions as in Example~\ref{exa:g01}, equivalences 
of stacks of 2-groups
\[
\bigl[\shg[i]\bigr]\equi[]\shg[i], \qquad i=0,\,1,
\]
where the structure of crossed module on the complex $\shg[i]$ (obtained by placing $\shg$ in $-i$ position) is the trivial one.

\subsection{Strictly abelian crossed modules}
Denote by $\catd^{[-1,0]}(\Z_X)$ the full subcategory of the
derived category of sheaves of abelian groups whose
objects have cohomology concentrated in degree $-1$ and $0$. Consider a complex of
abelian groups $\shf^\bullet \in \catc^{[-1,0]}(\Z_X)$ as a crossed module with
trivial left action. Then the functor $\shf^\bullet \mapsto [\shf^\bullet]$
factorizes through $\catd^{[-1,0]}(\Z_X)$ and one has
\begin{equation}\label{eq:h1commcross}
H^1(X;\shf^\bullet) = H^1(X;[\shf^\bullet]).
\end{equation}

Let $\psi\colon X \to Y$ be a continuous map (or a morphism of sites). 
The inverse and direct image of stacks of $2$-groups are again stacks of $2$-groups, and one has
\begin{equation}\label{eq:roim}
\opb\psi[\shg^\bullet] \equi[] [\opb\psi\shg^\bullet] \qquad 
\oim\psi[\shf^\bullet] \equi[] [\tau_{\leq 0}\roim\psi\shf^\bullet],
\end{equation}
where $\tau_{\leq 0}$ is the truncation functor. In particular, for a sheaf of abelian groups
$\shf$, one gets
\begin{equation}\label{eq:pi-roim}
\pi_i(\oim\psi (\shf[1]))\simeq R^{1-i}\oim\psi\shf,  \qquad i=0,\,1.
\end{equation}

\section{Algebroids}\label{se:alg}

Mitchell~\cite{Mit72} showed how algebras can be replaced by linear categories.
Similarly, sheaves of algebras can be replaced by linear stacks.
An algebroid is a linear stack locally equivalent to an algebra.
This notion, already implicit in~\cite{Kas96}, was introduced in~\cite{Kon01} 
and developed in~\cite{DP05} (see also \cite[\S2.1]{KS12} and \cite{DK11}).
It is the linear analogue of the notion of gerbe from~\cite{Gir71}: an
algebroid is to a gerbe as an algebra is to a group.

\medskip

Let $X$ be a topological space (or a site), and $\shr$ a sheaf of commutative
rings on $X$.

\subsection{Linear stacks}\label{sse:linearstacks}
A stack $\stkc$ on $X$ is called $\shr$-linear ($\shr$-stack, for short) if for any 
$\objc,\objc'\in\stkc(U)$ the sheaf $\shHom[\stkc](\objc,\objc')$ is endowed with an 
$\shr|_U$-module structure compatible with composition. In particular, 
$\shEnd[\stkc](\objc)$ has an $\shr|_U$-algebra structure with product given 
by composition. A functor between $\shr$-linear stacks is called $\shr$-linear 
($\shr$-functor, for short) if it is $\shr$-linear at the level of morphisms, while no 
linearity conditions are required on transformations.

One says that two $\shr$-stacks are equivalent if they
are equivalent through an $\shr$-functor. This implies that the
quasi-inverse is also an $\shr$-functor. We denote by $\equi$ this
equivalence relation.

The center $Z(\stkc)$ of an $\shr$-stack $\stkc$ is the sheaf
of endo-transformations of the identity functor $\stack \id_\stkc$. 
It has a natural structure of sheaf of commutative $\shr$-algebras. 
Note that a stack $\stkc$ is $\shr$-linear if and only if it is $\Z$-linear 
and its center is an $\shr$-algebra.

If $\stkc$ is an $\shr$-stack, then its opposite stack $\stkc^\op$ is again $\shr$-linear.
For $\stkd$ another $\shr$-stack, $\stkFun[\shr](\stkc,\stkd)$ denotes the full substack of 
$\stkFun(\stkc,\stkd)$ whose objects are $\shr$-functors, and is itself an $\shr$-stack. 
The tensor product $\stkc\tens[\shr]\stkd$ is the $\shr$-stack associated with the prestack
$U\mapsto \stkc(U)\tens[\shr(U)]\stkd(U)$ whose objects are pairs in
$\stkc(U)\times\stkd(U)$, with morphisms
\[
\Hom[{\stkc(U)\tens[\shr(U)]\stkd(U)}]((\objc,\objd),(\objc',\objd'))
= \Hom[\stkc(U)](\objc,\objc')
\tens[\shr(U)]
\Hom[\stkd(U)](\objd,\objd').
\]

\begin{lemma}\label{lem:stktensadj}
If $\shr$ is an $\shs$-algebra and $\stke$ an $\shs$-stack, then
\[
\stkFun[\shs](\stkc\tens[\shr]\stkd,\stke) \equi
\stkFun[\shr](\stkc,\stkFun[\shs](\stkd,\stke)).
\]
(This is in fact a $2$-adjunction.)
\end{lemma}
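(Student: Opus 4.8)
The plan is to construct the adjunction equivalence directly from the universal property of the tensor product of linear stacks, mimicking the classical hom-tensor adjunction for modules over commutative rings but carried out at the level of stacks.

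First I would observe that both sides make sense: since $\shr$ is an $\shs$-algebra, any $\shr$-linear stack is in particular $\shs$-linear (its center, being an $\shr$-algebra, is an $\shs$-algebra), so $\stkc\tens[\shr]\stkd$ and $\stke$ are $\shs$-stacks and $\stkFun[\shs](\stkc\tens[\shr]\stkd,\stke)$ is defined; on the other side, $\stkFun[\shs](\stkd,\stke)$ is an $\shr$-stack because $\stkd$ is $\shr$-linear, so $\stkFun[\shr](\stkc,\stkFun[\shs](\stkd,\stke))$ is defined as well. Next I would build the equivalence at the level of the defining prestacks: for an open $U\subset X$, an $\shs$-functor $F\colon \stkc(U)\tens[\shs(U)]\stkd(U)\to\stke(U)$ that factors through $\stkc(U)\tens[\shr(U)]\stkd(U)$ corresponds, by currying, to an $\shr$-functor $\stkc(U)\to\stkFun[\shs](\stkd(U),\stke(U))$ — this is exactly the classical bilinear-maps adjunction for linear categories (as in \cite{Mit65,Mit85}), applied sectionwise. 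One sends $F$ to the functor $\objc\mapsto F(\objc,-)$, and checks that $\shr$-bilinearity of $F$ in the appropriate sense is precisely what makes $\objc\mapsto F(\objc,-)$ land in $\shr$-linear functors; conversely one uncurries. The same recipe applied to transformations gives the action on morphisms, and one verifies it is compatible with restriction functors, hence defines an equivalence of prestacks, which then passes to the associated stacks since stackification is a $2$-functor preserving equivalences.

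The main obstacle I expect is the bookkeeping around the associated-stack construction and the lax/$2$-categorical coherence: the tensor product $\stkc\tens[\shr]\stkd$ is defined as the stack associated to a prestack, so a priori an $\shs$-functor out of it is not literally a functor out of the prestack, and one must invoke the universal property of stackification (an $\shs$-functor from the stackification of $\stkP$ to a stack $\stke$ is the same, up to equivalence, as an $\shs$-functor from $\stkP$ to $\stke$) to reduce to the prestack level. Similarly, one must be careful that ``$\shr$-functor'' on a stack of the form $\stkFun[\shs](\stkd,\stke)$ unwinds correctly: its hom-sheaves carry the $\shr$-module structure coming from $\stkd$'s $\shr$-linearity, and the compatibility of $F\mapsto (\objc\mapsto F(\objc,-))$ with these structures is the point that needs genuine (if routine) checking. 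Once the sectionwise adjunction and these compatibilities are in place, the statement — and the fact that it is a $2$-adjunction, i.e.\ the equivalence is itself $2$-natural in $\stke$ — follows formally; I would not grind through the coherence diagrams but simply note that they commute by the corresponding classical identities for linear categories, applied objectwise and then sheafified.
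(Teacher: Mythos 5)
Your argument is correct and is exactly the one the paper intends: the lemma is stated without proof there, and the standard route is the sectionwise curry/uncurry adjunction for linear categories (the balancing over $\shr(U)$ in $\Hom[\stkc(U)](\objc,\objc')\tens[\shr(U)]\Hom[\stkd(U)](\objd,\objd')$ corresponding precisely to $\shr$-linearity of $\objc\mapsto F(\objc,\dummy)$ for the $\shr$-structure that $\stkFun[\shs](\stkd,\stke)$ inherits from $\stkd$), followed by the $2$-universal property of stackification to replace the prestack $U\mapsto\stkc(U)\tens[\shr(U)]\stkd(U)$ by its associated stack. You correctly identify the only two points needing care (the stackification step and the compatibility of the curried functor with the $\shr$-module structures), so nothing is missing.
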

Let $\varphi\colon Y \to X$ be a continuous map (or a morphism of sites). Then 
$\opb \varphi\stkc$ is $\opb \varphi \shr$-linear and there is a $\opb \varphi \shr$-equivalence
$$
\opb \varphi (\stkc\tens[\shr]\stkd)\equi[] \opb \varphi \stkc\tens[\opb \varphi\shr]\opb \varphi\stkd.
$$
If $\stke$ is a $\opb \varphi \shr$-stack, then $\oim \varphi \stke$ is $\shr$-linear and there is an $\shr$-functor
\begin{equation}
\label{eq:dir-im}
\oim \varphi \stke\tens[\shr]\oim \varphi \stkf \to  \oim \varphi (\stke\tens[ \opb \varphi \shr]\stkf).
\end{equation}

\subsection{Modules over a linear stack}

Denote by $\catMod(\shr)$ the category of $\shr$-modules and by
$\stkMod(\shr)$ the corresponding $\shr$-stack given by $U\mapsto \catMod(\shr|_U)$.

For $\stkc$ an $\shr$-stack, the $\shr$-stack of (left) $\stkc$-modules is defined by
\begin{equation}
\label{eq:Mod}
\stkMod(\stkc) = \stkFun[\shr](\stkc,\stkMod(\shr)).
\end{equation}
(It follows from Lemma~\ref{lem:ModR} that this definition does not
depend on the base ring.)

The contravariant $2$-functor $\stkMod(\dummy)$ is defined as follows.
On objects, it is given by \eqref{eq:Mod}.
Consider the diagram
\[
\xymatrix@C=3em{ \stkc \ar@/_.8em/[r]_{\functf'}
\ar@/^.8em/[r]_{\Downarrow\, \transfd}^{\functf} & 
\stkd \ar[r]^-{\shn} & \stkMod(\shr),}
\]
where $\functf, \functf'$ and $\shn$ are $\shr$-functors, and $\transfd$ a transformation.
To the $\shr$-functor $\functf\colon \stkc\to\stkd$ one associates the $\shr$-functor
\[
\stkMod(\functf)\colon \stkMod(\stkd)\to\stkMod(\stkc), \qquad
\shn \mapsto \shn \circ \functf,
\]
and to the transformation $\transfd\colon \functf\to\functf'$ one associates the
transformation,
\[
\stkMod(\transfd)\colon\stkMod(\functf)\to\stkMod(\functf'),
\]
such that $\stkMod(\transfd)(\shn) = \id_\shn \mathop\bullet \transfd$
is the morphism associated to $\shn\in\stkMod(\stkd)$,
where $\bullet$ denotes the horizontal composition of transformations. In
other words, for $\objc\in\stkc$ one has 
$\stkMod(\transfd)(\shn)(\objc) = \shn(\transfd(\objc))$ as morphisms from
$\shn(\functf(\objc))$ to $\shn(\functf'(\objc))$ in $\stkMod(\shr)$. We use
the notation
\begin{equation}\label{eq:2mod}
{}_\functf(\dummy) = \stkMod(\functf).
\end{equation}

\subsection{Algebras as stacks}

Let $\sha$ be a sheaf of $\shr$-algebras. 
Denote by $\sha^\op$ the opposite algebra, by $\stkMod(\sha)$ the 
$\shr$-stack of left $\sha$-modules and by
$\hom[\sha](\cdot,\cdot) = \hom[\stkMod(\sha)](\cdot,\cdot)$
the internal hom-functor.

Denote by $\astk\sha$ the full substack of $\stkMod(\sha^\op)$
whose objects are locally free right $\sha$-modules of rank one. For any
$\shn\in\astk\sha(U)$ there is an $\shr|_U$-algebra isomorphism
$\shEnd[\astk \sha](\shn) \simeq \sha|_U$. 
Note that the stack $\astk\sha$ has a canonical global object given by $\sha$ itself with its
structure of right $\sha$-module. In particular, the sheaf $\pi_0(\astk \sha)$ is the singleton-valued constant sheaf.

For $f\colon\sha\to\shb$ an $\shr$-algebra morphism, denote by 
$\astk f\colon \astk\sha\to\astk\shb$ the
$\shr$-functor induced by the extension of scalars $(\dummy)\tens[\sha]\shb$.
We thus have a functor between the stack of $\shr$-algebras and that of
$\shr$-stacks
\begin{equation*}
\astk{(\dummy)} \colon \shr\text-\stack{Alg}_X \to \shr\text-\stack{Stk}_X.
\end{equation*}

\begin{remark}\label{rm:aprstk}
Let $\aprstk\sha$ be the separated prestack $U\mapsto \astk{\sha(U)}$, where $ \astk{\sha(U)}$ denotes the $\shr(U)$-category with one object and sections of $\sha(U)$ as its endomorphisms.
By Yoneda lemma (see \S\ref{sse:Y}),  the stack associated to $\aprstk\sha$ is $\shr$-equivalent to $\astk\sha$.
\end{remark}

The stack $\shr\text-\stack{Stk}_X$ is naturally upgraded to a 2-stack
by considering transformations of functors. 
Note that for any two $\shr$-algebras $\sha$ and $\shb$ on $U\subset X$ and any
$\shr$-functor $\functf\colon\astk\sha\to\astk\shb$ there exist a cover
$\shu=\{U_{i}\}_{i\in I}$ of $U$ and morphisms of $\shr$-algebras $f_i\colon
\sha|_{U_i}\to\shb|_{U_i}$ such that $\functf|_{U_i} \simeq \astk f_i$.

\begin{definition}
One says that an $\shr$-stack $\stkc$ is equivalent to an $\shr$-algebra $\sha$ if
$\stkc\equi\astk\sha$.
\end{definition}

In Proposition~\ref{pro:AequiB} we characterize the condition of
equivalence between algebras.

Recall that a stack $\stkc$ is non-empty if it has at least one global object, and it is locally connected by isomorphisms if any two objects $\objc,\objc'\in\stkc(U)$ are locally isomorphic. 
If $\stkc$ is $\shr$-linear, this amounts to ask that the sheaf 
$\shHom[\stkc](\objc,\objc')$ is a locally free $\shEnd[\stkc](\objc')$-module of rank one. 

\begin{lemma}\label{lem:a+}
An $\shr$-stack $\stkc$ is equivalent to an $\shr$-algebra if and only if it
is non-empty and locally connected by isomorphisms.
\end{lemma}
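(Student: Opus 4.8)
The plan is to prove the two implications separately.

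First I would show that if $\stkc \equi \astk\sha$ for some $\shr$-algebra $\sha$, then $\stkc$ is non empty and locally connected by isomorphisms. This is immediate from the corresponding properties of $\astk\sha$: as noted just before the statement, $\astk\sha$ has the canonical global object $\sha$ (viewed as a right $\sha$-module), so it is non empty; and any $\shn \in \astk\sha(U)$ is a locally free right $\sha|_U$-module of rank one, hence locally isomorphic to $\sha|_U$, so any two objects over $U$ are locally isomorphic. Since these properties are preserved under $\shr$-equivalence, $\stkc$ inherits them. (One should also check, using the $\shr$-linearity reformulation in the sentence preceding the lemma, that local connectedness by isomorphisms is equivalent to $\shHom[\stkc](\objc,\objc')$ being locally free of rank one over $\shEnd[\stkc](\objc')$ — this is the form that will be used in the converse.)

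For the converse, suppose $\stkc$ is non empty and locally connected by isomorphisms. Pick a global object $\objc \in \stkc(X)$ and set $\sha := \shEnd[\stkc](\objc)$, an $\shr$-algebra. The goal is to produce an $\shr$-equivalence $\stkc \equi \astk\sha$. I would define an $\shr$-functor $\Phi\colon \stkc \to \stkMod(\sha^\op)$ by $\Phi(\objc') = \shHom[\stkc](\objc,\objc')$ (a right $\sha$-module via precomposition), with the obvious action on morphisms by postcomposition, and on restrictions the obvious compatibility. By hypothesis each $\shHom[\stkc](\objc,\objc')$ is locally free of rank one as a right $\sha$-module, so $\Phi$ factors through the full substack $\astk\sha$. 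That $\Phi$ is fully faithful amounts to showing the natural map $\shHom[\stkc](\objc',\objc'') \to \shHom[\sha^\op](\shHom[\stkc](\objc,\objc'),\shHom[\stkc](\objc,\objc''))$ is an isomorphism of sheaves; since $\objc'$ is locally isomorphic to $\objc$, this can be checked locally, where it reduces to the tautology $\shHom[\sha^\op](\sha,M)\simeq M$. Essential surjectivity is clear: $\Phi(\objc) \simeq \sha$, which is the canonical global object of $\astk\sha$, and every object of $\astk\sha$ is locally isomorphic to $\sha$, hence in the essential image of a fully faithful functor that is locally essentially surjective and between stacks — so $\Phi$ is an equivalence.

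The main obstacle I expect is the bookkeeping for stacks rather than categories: one must verify that $\Phi$ is a genuine functor of stacks (compatible with restrictions up to the coherent invertible transformations that are part of the lax structure) and that "fully faithful plus locally essentially surjective" upgrades to an equivalence of stacks — this is where the descent condition on $\stkc$ and $\astk\sha$ is used, via the standard fact that a fully faithful functor of stacks inducing an epimorphism on $\pi_0$ is an equivalence (here $\pi_0(\astk\sha)$ is a singleton, so surjectivity on $\pi_0$ is automatic once $\Phi$ lands in $\astk\sha$). Everything else is either the tautological Yoneda-type computation above or a routine check that the constructions are $\shr$-linear.
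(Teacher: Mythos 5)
Your proof is correct and is essentially the paper's argument: both pick a global object $\objc$, form $\sha=\shEnd[\stkc](\objc)$, and show the resulting comparison functor between $\stkc$ and $\astk\sha$ is an equivalence precisely because $\stkc$ is locally connected by isomorphisms. The only (immaterial) difference is that you run the Yoneda-type functor $\objc'\mapsto\shHom[\stkc](\objc,\objc')$ from $\stkc$ into $\astk\sha$, whereas the paper states the quasi-inverse direction $\astk{\shEnd[\stkc](\objc)}\to\stkc$.
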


\begin{proof}
One implication is clear. Suppose that $\stkc$ is non-empty and let  $\objc\in\stkc(X)$. Then the fully faithful functor $\astk{\shEnd[\stkc](\objc)}\to\stkc$ is an equivalence if and only if  $\stkc$ is locally connected
by isomorphisms.
\end{proof}

Let $\stkc$ be an $\shr$-stack. For $\shn\in \astk \shr$ and $\objc\in \stkc$, one defines 
$\shn \tens[\shr] \objc\in \stkc$ as the representative of $\shn \tens[\shr] \hom[\stkc](\cdot,\objc)\in  \stkMod(\stkc^\op)$. Then one has $\shr$-equivalences
\begin{align*}
\astk\shr \tens[\shr] \stkc  \equi \stkc, &  \qquad (\shn, \objc) \mapsto \shn \tens[\shr] \objc,\\
\stkc \equi \stkFun[\shr](\astk\shr,\stkc),  & \qquad \objc \mapsto  (\dummy) \tens[\shr] \objc.
\end{align*}

\begin{lemma}
\label{lem:ModR}
The definition \eqref{eq:Mod} of stack of $\stkc$-modules  does not
depend on the base ring $\shr$.
\end{lemma}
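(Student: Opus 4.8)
The plan is to show that the apparent dependence on $\shr$ in $\stkMod(\stkc) = \stkFun[\shr](\stkc,\stkMod(\shr))$ is illusory: the right-hand side is equivalent to $\stkFun[\Z](\stkc,\stkMod(\Z))$, which makes no reference to $\shr$, and this suffices since any commutative ring is a $\Z$-algebra (so if $\stkc$ is both $\shr$- and $\shs$-linear, both stacks of modules are equivalent to the one computed over $\Z$). Recall from the discussion preceding the lemma that $\stkc$ is $\shr$-linear exactly when it is $\Z$-linear and $Z(\stkc)$ is an $\shr$-algebra, i.e.\ the $\shr$-structure is carried by a ring morphism $\shr\to Z(\stkc)$. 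I would consider the functor of stacks
\[
\Phi_*\colon \stkFun[\shr](\stkc,\stkMod(\shr)) \to \stkFun[\Z](\stkc,\stkMod(\Z))
\]
obtained by post-composing with the forgetful functor $\stkMod(\shr)\to\stkMod(\Z)$, and prove that $\Phi_*$ is an equivalence of stacks, i.e.\ fully faithful and locally essentially surjective.

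For a section $r$ of $\shr$ and an object $\objc$, write $z_r(\objc)\in\shEnd[\stkc](\objc)$ for the value on $\objc$ of the image of $r$ under $\shr\to Z(\stkc)$. Since $z_r$ is an endo-transformation of $\id_\stkc$, one has $z_r(\objc')\circ u = u\circ z_r(\objc)$ for every $u\colon\objc\to\objc'$ in $\stkc$. This centrality relation is the crux of both halves of the argument. \emph{Full faithfulness.} A transformation $\theta$ between the underlying $\Z$-functors of two $\shr$-functors $\functf,\functg$ has, a priori, only $\Z$-linear components $\theta(\objc)\colon\functf(\objc)\to\functg(\objc)$; but naturality of $\theta$ applied to $u=z_r(\objc)$, together with the fact that an $\shr$-functor sends $z_r(\objc)$ to multiplication by $r$, forces $\theta(\objc)$ to commute with multiplication by $r$, hence to be $\shr$-linear. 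Therefore the sheaves of transformations of the two functor stacks coincide, and $\Phi_*$ is fully faithful. \emph{Essential surjectivity.} Given a $\Z$-functor $H\colon\stkc\to\stkMod(\Z)$, promote each $H(\objc)$ to an $\shr$-module via the composite of $\shr\to Z(\stkc)\to\shEnd[\stkc](\objc)$ with the ring morphism $\shEnd[\stkc](\objc)\to\shEnd[\Z](H(\objc))$ induced by $H$, keeping $H$ unchanged on morphisms. The centrality relation shows that every $H(u)$ is $\shr$-linear and that $u\mapsto H(u)$ is $\shr$-linear on $\shHom[\stkc]$-sheaves (using that the $\shr$-action on $\shHom[\stkc](\objc,\objc')$ is given by composition with $z_r$), so this defines an object $H^\sharp$ of $\stkFun[\shr](\stkc,\stkMod(\shr))$ with $\Phi_* H^\sharp = H$; its lax and descent data are inherited from $H$, since the $\shr$-module structures have been defined canonically through the center, which commutes with restriction. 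Hence $\Phi_*$ is an equivalence, and running the same argument over any ring for which $\stkc$ is linear yields the asserted independence.

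I do not expect a serious obstacle: the one subtlety worth flagging is precisely that $\shr$-linearity constrains both the functors \emph{and}, through their targets, the components of their transformations, so that one must check that this constraint is automatically met once the $\Z$-linear data is fixed; this is exactly what the centrality of the image of $\shr$ in $Z(\stkc)$ provides, and it is what upgrades the comparison functor $\Phi_*$ from merely faithful to an equivalence. The remaining verifications — compatibility of $\Phi_*$ and of $H^\sharp$ with restriction functors and descent — are routine and follow formally from the construction.
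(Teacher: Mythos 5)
Your argument is correct, but it takes a genuinely different route from the paper's. The paper disposes of the lemma in two lines by pure abstract nonsense: for $\shr$ an $\shs$-algebra it applies the tensor--hom $2$-adjunction of Lemma~\ref{lem:stktensadj} with $\stkd=\astk\shr$ and $\stke=\stkMod(\shs)$, and then uses the equivalences $\stkc\tens[\shr]\astk\shr\equi\stkc$ and $\stkFun[\shs](\astk\shr,\stkMod(\shs))\equi\stkMod(\shr)$ to conclude $\stkFun[\shs](\stkc,\stkMod(\shs))\equi\stkFun[\shr](\stkc,\stkMod(\shr))$. You instead compare everything to the initial base $\Z$ via the forgetful functor and check full faithfulness and essential surjectivity by hand, with the centrality of the image of $\shr\to Z(\stkc)$ doing the work at every step: it forces components of transformations to be $\shr$-linear, it lets you promote a $\Z$-functor $H$ to an $\shr$-functor by transporting the action of $r$ through $H(z_r(\objc))$, and (as you note) it makes the lax/descent data go along for free because $z_r$ is a global section of $Z(\stkc)$ and hence compatible with restriction. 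The two proofs establish the same statement (taking $\shs=\Z$ in the paper's argument recovers your comparison, and conversely two base rings $\shr$, $\shr'$ both compare to $\Z$ in yours). What the paper's approach buys is brevity and reuse of machinery ($\astk\shr$ and the adjunction) needed elsewhere; what yours buys is an explicit explanation of \emph{why} the lemma is true, namely that the entire $\shr$-module structure on values of a module is already encoded in the $\Z$-linear data through the canonical morphism $\shr\to Z(\stkc)$ --- which is essentially the content of Lemma~\ref{lem:ZMod}. The one hinge of your argument, that an $\shr$-functor sends $z_r(\objc)=r\cdot\id_\objc$ to multiplication by $r$, is correctly identified and stated, so I see no gap.
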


\begin{proof}
It follows from Lemma~\ref{lem:stktensadj} for $\shs = \Z_X$,
$\stkd = \astk\shr$
and $\stke = \stkMod(\Z_X)$ that 
\[
\stkFun[\shr](\stkc,\stkMod(\shr))
\equi 
\stkFun[\Z_X](\stkc,\stkMod(\Z_X)),
\]
where we use the equivalence
$\stkFun[\Z_X](\astk\shr,\stkMod(\Z_X)) \equi \stkMod(\shr).$
\end{proof}

\subsection{Compatibility}

Let $\sha$ and $\shb$ be two $\shr$-algebras, and $\varphi\colon Y \to X$ a continuous map 
(or a morphism of sites). There are an $\shr$-algebra isomorphism
\[
Z(\sha) \isoto Z(\astk \sha), \qquad
a \mapsto (\shn \to \shn ; n \mapsto an),
\]
and $\shr$-equivalences
\begin{align*}
(\astk\sha)^\op &\equi \astk{(\sha^\op)}, \qquad
\shn \mapsto \shHom[\sha^\op](\shn, \sha), \\
\stkMod(\sha) &\equi \stkMod(\astk\sha), \qquad
\shm \mapsto (\dummy) \tens[\sha] \shm, \\
\astk\sha \tens[\shr] \astk\shb &\equi  \astk{(\sha \tens[\shr] \shb)}, \qquad
(\shn , \shq) \mapsto \shn\tens[\shr]\shq\\
\opb \varphi \astk\sha &\equi \astk{(\opb \varphi \sha)}, \qquad
\shn  \mapsto \opb\varphi \shn.
\end{align*}

For $f,f'\colon \sha\to \shb$ two $\shr$-algebra morphisms, the sections on $U\subset X$ 
of the sheaf $\hom[{\stkFun[\shr](\astk \sha,\astk \shb)}]( \astk f , \astk {f'} )$ are given by
\begin{equation}
\label{eq:shbfunct}
\{ b\in \shb(U) \colon b f(a) = f'(a) b  \, \text{ for each } a\in
 \sha(V) \text{ and } V\subset U\},
\end{equation}
with composition of transformations given by the product in $\shb$. 

For $\shn$ a left $\shb$-module, with notation~\eqref{eq:2mod} we set
\begin{equation}
\label{eq:2modalg}
 {}_{f}\shn = {}_{\astk f}\shn
\end{equation}
the associated left $\sha$-module. By~\eqref{eq:shbfunct}, the morphism of $\sha$-modules associated 
to a transformation $b\colon \astk f \to \astk {f'}$ is given by
\begin{align*}
{}_{f}\shn \to {}_{f'}\shn; n\mapsto bn.
\end{align*}

\subsection{Algebroids}\label{sse:algebroid}

Recall from Lemma~\ref{lem:a+} that an $\shr$-stack is equivalent to
an $\shr$-algebra if and only if it is non-empty and is locally connected
by isomorphisms.

\begin{definition}\label{def:algebroid}
An $\shr$-algebroid is an $\shr$-stack which is locally non-empty and
locally connected by isomorphisms.
\end{definition}

In other words, an $\shr$-algebroid is an $\shr$-stack $\stka$ which is locally equivalent 
to an algebra. It is globally an algebra if and only if it has a global object\footnote{If the category 
$\stka(U)$ has a zero object for $U\subset X$, then $\stka|_U\equi[]\astk 0$, where $0$ denotes the ring with $1=0$. In particular, except for the case $\astk 0$, algebroids are not stacks of additive categories.}.
Note also that an $\shr$-stack is an $\shr$-algebroid if and only if its substack with the same 
objects and only invertible morphisms is a gerbe.

The stack $\stkMod(\stka)$ is an example of stack of twisted sheaves, i.e.~it is
a stack locally equivalent to a stack of modules over an algebra (see~\cite{KS06,DP04}).
A cocyclic description of algebroids and of their modules is recalled in 
Appendix~\ref{se:acocy} and~\ref{se:mcocy}.

Note that the existence of an $\shr$-functor
$\astk\shr\to\stka$ is equivalent to the existence of a global object
for $\stka$. In this case there is a forgetful functor
\[
\stkMod(\stka) \to \stkMod(\shr).
\]

\begin{lemma}
An $\shr$-stack $\stkc$ on $X$ is an algebroid if and only if $\pi_0(\stkc)\simeq\{\mathrm{pt}\}_X$, the singleton-valued constant sheaf.
\end{lemma}
It follows from~\eqref{eq:inv-pi_0} that inverse images of algebroids are algebroids.

Let $\stkc$ be an $\shr$-stack. Then for any $\shr$-algebroid $\stka$ one has
$$
\pi_0(\stka\tens[\shr]\stkc)\simeq\pi_0(\stkc).
$$
In particular, the tensor product of algebroids is an algebroid. 

\begin{definition}
\begin{itemize}
\item[(i)]
Let $\sha$ be an $\shr$-algebra. An $\shr$-twisted form of $\sha$ is
an $\shr$-algebroid locally $\shr$-equivalent to $\sha$.
\item[(ii)]
An invertible $\shr$-algebroid is an $\shr$-twisted form of $\shr$.
\end{itemize}
\end{definition}
Note that any $\shr$-functor between invertible $\shr$-algebroids is an equivalence,
since it is locally isomorphic to the identity functor of $\astk\shr$.

If $\stkc$ is an invertible $\shr$-algebroid, then $\shr\isoto Z(\stkc)$ and for any $\shr$-stack $\stkd$ 
there is an $\shr$-equivalence
$$
\stkc^\op\tens_\shr\stkd \equi \stkFun[\shr](\stkc,\stkd), \quad (\gamma,\delta)\mapsto \hom[\stkc](\gamma,\cdot) \tens[\shr] \delta.
$$
In particular, the set of $\shr$-equivalence classes of invertible $\shr$-algebroids 
is a group, with multiplication given by $\tens[\shr]$ and inverse given by $(\cdot)^\op$.

For an $\shr$-stack $\stkc$, denote by $\stkAut[\shr](\stkc)$ the full substack 
of $\stkAut(\stkc)$ whose objects are $\shr$-equivalences.
By Corollary~\ref{cor:h1aut}, the cohomology $H^1(X;\stkAut[\shr](\astk\sha))$ classifies
$\shr$-equivalence classes of $\shr$-twisted forms of $\sha$.
In terms of crossed modules (cf. Section~\ref{se:crossed-mod}), one has
\begin{equation}\label{eq:aut}
\stkAut[\shr](\astk\sha) \equi[] [(\sha^\times\to[\ad]
\shAut[\shr\text-\stack{Alg}_X](\sha),\ \delta)],
\end{equation}
where $\delta(f)(a) = f(a)$ and $\ad(a)(b) = a b a^{-1}$.
In particular,
$\stkAut[\shr](\astk\shr) \equi[] \shr^\times[1]$ and~\eqref{eq:HiG} implies

\begin{lemma}
\label{lem:tors}
The group of $\shr$-equivalence classes of
invertible $\shr$-alge\-broids is isomorphic to $H^2(X;\shr^\times)$.
\end{lemma}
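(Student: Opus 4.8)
The plan is to combine the two ingredients already assembled in the excerpt: the crossed-module description of $\stkAut[\shr](\astk\shr)$ and the comparison isomorphism~\eqref{eq:HiG}. First I would specialize the general formula
\[
\stkAut[\shr](\astk\sha) \equi[] [(\sha^\times\to[\ad]
\shAut[\shr\text-\stack{Alg}_X](\sha),\ \delta)]
\]
to the case $\sha = \shr$. Here $\shr$ is commutative, so $\shAut[\shr\text-\stack{Alg}_X](\shr)$ is trivial (the only $\shr$-algebra automorphism of $\shr$ is the identity, since it must fix $1$ and be $\shr$-linear), and the adjoint action $\ad\colon \shr^\times \to \shAut[\shr\text-\stack{Alg}_X](\shr)$ is also trivial because $\shr$ is commutative. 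Hence the crossed module reduces to $(\shr^\times \to 0)$ with trivial action, i.e.\ to the complex $\shr^\times[1]$ viewed as a crossed module, which gives $\stkAut[\shr](\astk\shr) \equi[] [\shr^\times[1]] \equi[] \shr^\times[1]$ by the identification $[\shg[i]]\equi[]\shg[i]$ recalled after the crossed-module construction.

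Next I would invoke Corollary~\ref{cor:h1aut} together with the observation already made in the excerpt that $H^1(X;\stkAut[\shr](\astk\shr))$ classifies $\shr$-equivalence classes of $\shr$-twisted forms of $\shr$, i.e.\ of invertible $\shr$-algebroids by definition. Combining this with the equivalence of stacks of $2$-groups $\stkAut[\shr](\astk\shr)\equi[]\shr^\times[1]$, and with the functoriality of $H^1$ under equivalences of $2$-groups, one gets a bijection
\[
\{\shr\text{-equivalence classes of invertible }\shr\text{-algebroids}\} \simeq H^1(X;\shr^\times[1]).
\]
Finally, applying~\eqref{eq:HiG} with $i=1$ yields $H^1(X;\shr^\times[1]) \simeq H^2(X;\shr^\times)$, which is defined since $\shr^\times$ is an abelian sheaf. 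It remains to check that this bijection is a group isomorphism: the group structure on the left is $\tens[\shr]$ with inverse $(\cdot)^\op$, and one verifies that under the crossed-module dictionary $\tens[\shr]$ corresponds to the product of $1$-cocycles with values in $\shr^\times[1]$ (i.e.\ multiplication of the $\shr^\times$-valued \v{C}ech $2$-cocycles), which is the abelian group structure on $H^2(X;\shr^\times)$; this is essentially the content of the equivalences recorded just before the statement, together with the computation $\stkAut[\shr](\astk\shr\tens_\shr\astk\shr)\simeq\stkAut[\shr](\astk\shr)$ coming from $\astk\shr\tens_\shr\astk\shr\equi\astk\shr$.

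The main obstacle, and the only place requiring genuine care rather than unwinding definitions, is the compatibility of the two group structures: showing that tensor product of invertible algebroids translates into addition of cohomology classes in $H^2(X;\shr^\times)$. Concretely, given invertible algebroids presented by descent data $\pair{\functf_{ij}}{\transfa_{ijk}}{}$ and $\pair{\functf'_{ij}}{\transfa'_{ijk}}{}$ with values in $\shr^\times[1]$, whose associated \v{C}ech $2$-cocycles in $\shr^\times$ are $c_{ijk}$ and $c'_{ijk}$, one must identify the cocycle of the tensor product with $c_{ijk}\,c'_{ijk}$ and the cocycle of the opposite with $c_{ijk}^{-1}$. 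This is a bookkeeping argument using the monoidal structure of $[\shr^\times[1]]$ recalled in the crossed-module section (where $\tens$ on objects is the product and the left action is trivial) and the cocyclic description of algebroids from Appendix~\ref{se:acocy}; I would carry it out by transporting everything through the equivalence $\astk\shr\tens_\shr\astk\shr\equi\astk\shr$ and checking that the induced map on automorphism $2$-groups is the multiplication $\shr^\times[1]\times\shr^\times[1]\to\shr^\times[1]$, after which the identity on $H^2$ follows from~\eqref{eq:HiG} being an isomorphism of abelian groups.
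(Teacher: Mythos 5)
Your proof is correct and follows essentially the same route as the paper: specialize the crossed-module presentation of $\stkAut[\shr](\astk\sha)$ to $\sha=\shr$ to obtain $\stkAut[\shr](\astk\shr)\equi[]\shr^\times[1]$, then combine Corollary~\ref{cor:h1aut} with~\eqref{eq:HiG}. The paper leaves the compatibility of the group structures implicit, so your final paragraph spelling out that $\tens[\shr]$ corresponds to multiplication of \v{C}ech $2$-cocycles is a welcome (and accurate) elaboration rather than a deviation.
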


\subsection{Inner forms}\label{subs:inn}

Let $\sha$ be a central $\shr$-algebra, i.e.~$Z(\sha)=\shr$.
(If $\sha$ is not central, the following discussion still holds by replacing $\shr$ with $Z(\sha)$.)

Denote by $\inn(\sha)$ the sheaf theoretical image of the group morphism
$\ad\colon \sha^\times\to\shAut[\shr\text-\stack{Alg}_X](\sha)$. Its sections are 
the inner automorphisms of $\sha$, i.e.~automorphisms locally of the form $\ad(a)$ 
for some $a\in\sha^\times$. Recall that an $\shr$-algebra $\shb$ is called an inner form of $\sha$ 
if there exists an open cover $\{U_i\}_{i\in I}$ of $X$ and ring isomorphisms
$f_i\colon \sha|_{U_i} \to \shb|_{U_i}$ such that $f_j^{-1}f_i \in \inn(\sha|_{U_{ij}})$. 

Examples of inner forms are given by Azumaya algebras 
and rings of twisted differential operators (see for example~\cite{DP04} for more details).

Let $\shb$ be an $\shr$-algebra. Denote by $\stke_{\sha,\shb}\subset \stkFun[\shr](\astk\sha,\astk\shb)$ the full substack of $\shr$-equivalences. Note that 
$\stke_{\sha,\shb}^\op\equi\stke_{\shb,\sha}$.

\begin{lemma}\label{le:AinnerB} 
$\shb$ is an inner form of $\sha$ if and only if  $\stke_{\sha,\shb}$ is an $\shr$-algebroid.
\end{lemma}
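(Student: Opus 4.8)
The plan is to unwind the definitions on both sides and match them via the local structure theory already recorded. Recall that $\stke_{\sha,\shb}$ is by construction an $\shr$-stack; by the Lemma characterizing algebroids via $\pi_0$, it is an $\shr$-algebroid if and only if $\pi_0(\stke_{\sha,\shb})$ is a singleton, i.e.\ $\stke_{\sha,\shb}$ is locally non empty and locally connected by isomorphisms. I would treat ``locally connected by isomorphisms'' first, since it is automatic: given two local $\shr$-equivalences $\functf,\functg\colon\astk\sha|_U\to\astk\shb|_U$, the stack $\stke_{\sha,\shb}$ has all morphisms invertible (a transformation between equivalences of stacks is automatically invertible — this is exactly the morphism-level description entering $\stkAut$), and by Lemma~\ref{le:+locfull} (local fullness of $\astk{(\dummy)}$) both $\functf$ and $\functg$ are locally of the form $\astk{f_i}$, $\astk{g_i}$ for ring isomorphisms $f_i,g_i$; then $\hom[{\stke_{\sha,\shb}}](\astk{f_i},\astk{g_i})$ is computed by~\eqref{eq:shbfunct}, and since $\sha$ is central with $Z(\sha)=\shr$ one checks this sheaf is a locally free rank-one module over $\shEnd[{\stke_{\sha,\shb}}](\astk{g_i})\simeq\shb^\times\text{-stuff}$, giving local isomorphism of any two objects. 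So the content of the lemma is the ``locally non empty'' condition, which is precisely the statement that local $\shr$-equivalences $\astk\sha|_{U_i}\to\astk\shb|_{U_i}$ exist on some cover.

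For the forward implication, suppose $\shb$ is an inner form of $\sha$: there is a cover $\{U_i\}$ and ring isomorphisms $f_i\colon\sha|_{U_i}\to\shb|_{U_i}$. Then each $\astk{f_i}\colon\astk\sha|_{U_i}\to\astk\shb|_{U_i}$ is an $\shr$-equivalence (the functor $\astk{(\dummy)}$ sends algebra isomorphisms to equivalences, via extension of scalars), hence is an object of $\stke_{\sha,\shb}(U_i)$; so $\stke_{\sha,\shb}$ is locally non empty, and combined with the paragraph above it is an $\shr$-algebroid. Note that the inner-form hypothesis $f_j^{-1}f_i\in\inn(\sha|_{U_{ij}})$ is not needed for this direction — mere existence of local ring isomorphisms suffices for $\stke_{\sha,\shb}$ to be an algebroid; I expect the ``inner'' refinement to become relevant only when one later identifies the corresponding cohomology class, not here.

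For the converse, suppose $\stke_{\sha,\shb}$ is an $\shr$-algebroid. Then by Definition~\ref{def:algebroid} it is locally non empty, so there is a cover $\{U_i\}$ and objects $\functf_i\in\stke_{\sha,\shb}(U_i)$, i.e.\ $\shr$-equivalences $\functf_i\colon\astk\sha|_{U_i}\to\astk\shb|_{U_i}$. By Lemma~\ref{le:+locfull}, after refining the cover we may write $\functf_i\simeq\astk{f_i}$ for $\shr$-algebra morphisms $f_i\colon\sha|_{U_i}\to\shb|_{U_i}$, and since $\astk{f_i}$ is an equivalence and $\astk{(\dummy)}$ is faithful, $f_i$ is an isomorphism (a quasi-inverse of $\astk{f_i}$ is again, locally, of the form $\astk{g}$, and $\astk{g f_i}\simeq\id=\astk{\id}$ forces $g f_i=\id$ by faithfulness, similarly $f_i g=\id$). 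Hence $\shb|_{U_i}\simeq\sha|_{U_i}$ as $\shr$-algebras; to upgrade ``locally isomorphic'' to ``inner form'', I would consider $f_j^{-1}f_i\in\shAut[\shr\text-\stack{Alg}]( \sha|_{U_{ij}})$ and use that $\astk{f_i}|_{U_{ij}}$ and $\astk{f_j}|_{U_{ij}}$ are two objects of the algebroid $\stke_{\sha,\shb}$ over $U_{ij}$, hence locally isomorphic; an isomorphism $\astk{f_i}\to\astk{f_j}$ over a refinement is by~\eqref{eq:shbfunct} given by a unit $b\in\shb^\times$ with $b f_i(a)=f_j(a)b$, i.e.\ $f_j^{-1}(\ad(b))f_j\circ f_j = f_i$ up to rewriting, showing $f_j^{-1}f_i=f_j^{-1}\ad(b^{-1})f_j\cdots$; pushing $\ad(b)$ through the isomorphism $f_j$ lands it in $\inn(\sha|_{U_{ij}})$, as required.

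\textbf{Main obstacle.} The delicate point is the last step: translating a local isomorphism in $\stke_{\sha,\shb}$ between $\astk{f_i}$ and $\astk{f_j}$ into the condition $f_j^{-1}f_i\in\inn(\sha)$ with the correct bookkeeping of which side the inner automorphism acts on, and checking that one really lands in inner (not merely locally inner) automorphisms of $\sha$ — here centrality $Z(\sha)=\shr$ is what guarantees that the unit $b\in\shb^\times$ is determined up to $\shr^\times$ and that $f_j^{-1}(b)$ makes sense as a genuine inner automorphism. One must also be slightly careful that all the cover refinements taken (for local non-emptiness, for Lemma~\ref{le:+locfull}, and for the local connectedness) can be amalgamated into a single cover, which is routine but should be stated.
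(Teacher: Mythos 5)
Your converse direction is essentially the paper's argument: local non-emptiness of $\stke_{\sha,\shb}$ forces $\shb$ to be locally isomorphic to $\sha$, and the computation via \eqref{eq:shbfunct} of invertible transformations $\astk{f_i}\to\astk{f_j}$ yields $f_j^{-1}f_i\in\inn(\sha)$. (The paper does the whole lemma in two lines: substituting $a=f^{-1}(b)$ in \eqref{eq:shbfunct}, the invertible transformations from $\astk f$ to $\astk{f'}$ are exactly $\{a\in\sha^\times \colon f^{-1}f'=\ad(a)\}$, and both equivalences are read off from this.)

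Your forward direction, however, contains a genuine error. You claim that ``locally connected by isomorphisms'' is automatic for $\stke_{\sha,\shb}$, and consequently that the hypothesis $f_j^{-1}f_i\in\inn(\sha|_{U_{ij}})$ is not needed. This is false, and it is precisely the content of the lemma. By \eqref{eq:shbfunct}, the sheaf of transformations from $\astk f$ to $\astk{f'}$ is the twisted centralizer $\{\,b\in\shb \colon b\,f(a)=f'(a)\,b \text{ for all } a\,\}$; an invertible section $b$ satisfies $f'=\ad(b)\circ f$, so such a section exists locally if and only if $f^{-1}f'$ is locally inner. This sheaf is \emph{not} in general locally free of rank one over $\shEnd[\stke_{\sha,\shb}](\astk{f'})\simeq\shr$: it can vanish, for instance when $f^{-1}f'$ is an outer automorphism. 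Hence two objects $\astk f$, $\astk{f'}$ of $\stke_{\sha,\shb}$ over the same open set need not be locally isomorphic. The correct bookkeeping is: local non-emptiness of $\stke_{\sha,\shb}$ accounts only for ``$\shb$ locally isomorphic to $\sha$'', while local connectedness by isomorphisms accounts exactly for the inner condition on the transition automorphisms; you need both halves of the definition of inner form, one for each half of the definition of algebroid. Your own computation in the converse direction already exhibits this: the condition $f_j^{-1}f_i=\ad(a)$ that you extract there is a nontrivial constraint, which could not arise if the hom-sheaves were automatically locally free of rank one with invertible generator.
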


\begin{proof}
Since $\shr$-equivalences $\astk\sha\equito\astk\shb$ are locally given by $\shr$-algebra isomorphisms $\sha\isoto\shb$, it follows that $\stke_{\sha,\shb}$ is locally non-empty if and only if 
$\shb$ is locally isomorphic to $\sha$. 

Let $f, f '\colon\sha\to\shb$ be $\shr$-algebra 
isomorphisms. Setting $a=f^{-1}(b)$ in~\eqref{eq:shbfunct}, the invertible transformations from $\astk f$ to $\astk {f'}$ are given by
$$
\{ a\in \sha^\times \colon f^{-1}f' = \ad(a) \},
$$
hence $\stke_{\sha,\shb}$ is an $\shr$-algebroid if and only if $\shb$ is an inner form of $\sha$. 
\end{proof}
Since $\shEnd[\stke_{\sha,\shb}](\astk f)= \shr$, if $\shb$ is an inner form of $\sha$ it follows that 
$\stke_{\sha,\shb}$ is an invertible $\shr$-algebroid and $\stke_{\sha,\shb}\tens[\shr]\astk\sha \equi\astk\shb$. In particular, one gets an equivalence of stacks of 2-groups $\stkAut[\shr](\astk \sha)\equi[] \stkAut[\shr](\astk \shb)$.

Consider the non-abelian exact sequence
\[
H^1(X; \sha^\times) \to[b] H^1(X; \inn(\sha)) \to[c] H^2(X;\shr^\times)
\]
induced by the short exact sequence
\[
1\to \shr^\times \to \sha^\times \to[\ad] \inn(\sha) \to 1.
\]
For $\shb$ an inner form of $\sha$ and $\shp$ a locally free $\sha^\op$-module  of rank one, denote by $[\shb]$ and $[\shp]$ the associated cohomology classes
in $H^1(X;\inn(\sha))$ and $H^1(X; \sha^\times)$ respectively.
Then $b[\shp]=[\shEnd[\sha^\op](\shp)]$ and $c([\shb])=[\stke_{\sha,\shb}]$.

\begin{proposition}
\label{pro:AequiB}
The following conditions are equivalent.
\begin{itemize}
\item[(i)]
The stacks $\astk\sha$ and $\astk\shb$ are $\shr$-equivalent.
\item[(ii)]
There exists a locally free $\sha^\op$-module $\shp$ of rank one
such that  $\shb\simeq\shEnd[\sha^\op](\shp)$.
\item[(iii)]
$\shb$ is an inner form of $\sha$ and $c([\shb])=1$.
\end{itemize}
\end{proposition}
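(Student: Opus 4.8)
The plan is to establish the cyclic chain of implications (i) $\Rightarrow$ (ii) $\Rightarrow$ (iii) $\Rightarrow$ (i), using the tools already set up: Lemma~\ref{le:+locfull} (so that $\shr$-functors between $\astk\sha$ and $\astk\shb$ are \emph{locally} of the form $\astk{f_i}$ for ring isomorphisms $f_i$), the description \eqref{eq:shbfunct} of morphisms between such functors, and the non abelian exact sequence $H^1(X;\sha^\times)\to[b] H^1(X;\inn(\sha))\to[c] H^2(X;\shr^\times)$ together with the identifications $b[\shp]=[\shEnd[\sha^\op](\shp)]$ and $c([\shb])=[\stke_{\sha,\shb}]$ recorded just above the statement.

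For (i) $\Rightarrow$ (ii): given an $\shr$-equivalence $\functf\colon\astk\sha\equito\astk\shb$, one has a global object $\shp\mathrel{:=}\functf(\sha)\in\astk\shb(X)$, which by definition of $\astk\shb$ is a locally free right $\shb$-module of rank one; but $\functf$ being an $\shr$-equivalence and $\shEnd[\astk\sha](\sha)\simeq\sha$, it induces an $\shr$-algebra isomorphism $\sha\simeq\shEnd[\astk\shb](\shp)$. Viewing $\shp$ on the other side, $\shEnd$ of a locally free rank-one right $\shb$-module is a left $\shb^\op$-module, so after the dictionary $(\astk\shb)^\op\equi\astk{(\shb^\op)}$ one rewrites this as: there is a locally free $\sha^\op$-module $\shp'$ of rank one with $\shb\simeq\shEnd[\sha^\op](\shp')$. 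The implication (ii) $\Rightarrow$ (iii): from $\shb\simeq\shEnd[\sha^\op](\shp)$ with $\shp$ locally free of rank one over $\sha^\op$, locally $\shp\simeq\sha$ and hence locally $\shb\simeq\sha$, so $\shb$ is an inner form of $\sha$ (the transition isomorphisms $f_j^{-1}f_i$ differ by conjugation because they are induced by the transition isomorphisms of the invertible module $\shp$); moreover $[\shb]=b[\shp]$ lies in the image of $b$, so by exactness $c([\shb])=1$. Finally (iii) $\Rightarrow$ (i): if $\shb$ is an inner form of $\sha$, then by Lemma~\ref{le:AinnerB} the stack $\stke_{\sha,\shb}$ is an invertible $\shr$-algebroid with class $c([\shb])\in H^2(X;\shr^\times)$, and $\stke_{\sha,\shb}\tens[\shr]\astk\sha\equi\astk\shb$ (noted after that lemma); the hypothesis $c([\shb])=1$ means $\stke_{\sha,\shb}\equi\astk\shr$, whence $\astk\shb\equi\astk\shr\tens[\shr]\astk\sha\equi\astk\sha$, using the unit equivalence $\astk\shr\tens[\shr]\stkc\equi\stkc$ from \S\ref{sse:algebroid}.

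The routine parts are the module-theoretic bookkeeping in (i) $\Rightarrow$ (ii) and (ii) $\Rightarrow$ (iii); the step that carries the real content is (iii) $\Rightarrow$ (i), and there the work has essentially been front-loaded into Lemma~\ref{le:AinnerB} and the surrounding discussion, so the main thing to get right is the identification $c([\shb])=[\stke_{\sha,\shb}]$ in $H^2(X;\shr^\times)$ and the vanishing of this class as exactly the triviality of the invertible algebroid $\stke_{\sha,\shb}$ (via Lemma~\ref{lem:tors}). In other words, the only genuine obstacle is to make precise that ``$c([\shb])=1$'' and ``$\stke_{\sha,\shb}$ is a trivial invertible $\shr$-algebroid'' are the same condition; once that is granted, the cycle of implications closes formally.
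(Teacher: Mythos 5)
Your proposal is correct and follows essentially the same route as the paper: the same cycle (i)$\Rightarrow$(ii)$\Rightarrow$(iii)$\Rightarrow$(i), resting on Lemma~\ref{le:AinnerB}, the identities $b[\shp]=[\shEnd[\sha^\op](\shp)]$ and $c([\shb])=[\stke_{\sha,\shb}]$, and the exact sequence recorded just before the statement. The only stylistic difference is in (i)$\Rightarrow$(ii): the paper applies a quasi-inverse equivalence $\astk\shb\to\astk\sha$ to the canonical global object of $\astk\shb$, landing directly on a locally free $\sha^\op$-module, whereas you evaluate $\functf\colon\astk\sha\to\astk\shb$ at $\sha$ and then need the dualization $\shp\mapsto\shHom[\shb^\op](\shp,\shb)$ to swap sides --- harmless, but an avoidable extra step that your write-up only sketches.
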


\begin{proof}
(i)$\Rightarrow$(ii)\footnote{The equivalence between (i) and (ii) can also be deduced
from Corollary~\ref{cor:AutInv}.}
Let $\functg\colon\astk\shb\to\astk\sha$ be an $\shr$-equivalence.
Recall that $\astk\shb \subset \stkMod(\shb^\op)$ is the substack of
locally  free modules of rank one.
Let $\objb$  be the canonical global object of $\astk\shb$, and set $\shp =
\functg(\objb)$. Then $\shb$ is isomorphic to $\shEnd[\sha^\op](\shp)$.

\smallskip\noindent
(ii)$\Rightarrow$(iii)
$\shb$ is clearly an inner form of $\sha$ and $\shp$ has a structure of $\sha^\op\tens[\shr]\shb$-module by the isomorphism $\shb\isoto\shEnd[\sha^\op](\shp)$. Then $(\cdot)\tens[\shb]\shp$ gives a global object of $\stke_{\shb,\sha}$ and $c([\shb])=[\stke_{\shb,\sha}^\op]=1$.

\smallskip\noindent
(i)$\Leftarrow$(iii)
By Lemma~\ref{le:AinnerB} follows that $c([\shb])=1$ if and only if $\stke_{\sha,\shb}$ has a global object.

\smallskip\noindent
\end{proof}

\section{Morita theory for linear stacks}\label{se:Mor}

Morita theory classically deals with modules over algebras.
It is extended to modules over linear categories in~\cite{Mit65,Mit85}
and to stacks of modules over sheaves of algebras
in~\cite[Chapter~19]{KS06} (see also \cite{DP04}). 
Here, we summarize these extensions by considering stacks of modules over linear
stacks, and in particular over algebroids.

\medskip

Let $X$ be a topological space (or a site), and $\shr$ a sheaf of
commutative rings on $X$. 

\subsection{Yoneda embedding}\label{sse:Y}
Recall that a category is called (co)complete if it admits small (co)limits. A
prestack $\stkc$ on $X$ is called (co)complete if the categories $\stkc(U)$
are (co)complete for each $U\subset X$, and the restriction functors commute
with (co)limits.

A prestack $\stkc$ on $X$ is called a proper stack (see~\cite{KS01,Pre07}) if
it is separated, cocomplete, and if for each inclusion of open subsets
$v\colon V \hookrightarrow U$, the restriction functors $\stkc(v) = (\cdot)|_V$ admits a
fully faithful left adjoint
\[
\eim v \colon \stkc(V) \to \stkc(U),
\]
called zero-extension, such that for a diagram of open inclusions
\[ 
\xymatrix@=1em{ V\cap W \ar[r]^-{v'} \ar[d]_{w'} & W \ar[d]^-w \\ V
\ar[r]^v & U, }
\] 
the natural transformation $\eim{v'} \circ \stkc(w') \to \stkc(w) \circ \eim
v$ is an isomorphism.

\begin{lemma}
\label{eq:vpropersheaf}
Let $\stkc$ be a proper stack.
For $\objc\in\stkc(V)$ and $\objc'\in\stkc(U)$
there is an isomorphism of $\shr|_U$-modules
\[
\oim v\hom[\stkc|_V](\objc,\objc'|_V) \simeq
\hom[\stkc|_U](\eim v\objc,\objc').
\] 
\end{lemma}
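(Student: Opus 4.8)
The plan is to prove Lemma~\ref{eq:vpropersheaf} by reducing the sheaf-theoretic statement to the already-assumed adjunction $\eim v \dashv \stkc(v)$ via a local check. The module structure on both sides comes from the $\shr|_U$-linearity of the stack, so the content is entirely in the underlying sheaf isomorphism; I will construct the map first and then verify it is an isomorphism on stalks.

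\emph{Construction of the map.} Both sides are presheaves on $U$; I would exhibit a morphism and then check it is an isomorphism. For an open $W\subset U$ write $w'\colon V\cap W\hookrightarrow W$ and $v'\colon V\cap W\hookrightarrow V$ for the inclusions (the left square of the displayed diagram with $W$ in place of the upper-right corner). By definition,
\[
\bigl(\oim v\,\hom[\stkc|_V](\objc,\objc'|_V)\bigr)(W) = \hom[\stkc(V\cap W)](\objc|_{V\cap W},\objc'|_{V\cap W}),
\]
while
\[
\hom[\stkc|_U](\eim v\objc,\objc')(W) = \hom[\stkc(W)]\bigl((\eim v\objc)|_W,\objc'|_W\bigr).
\]
Using that zero-extension commutes with restriction — the isomorphism $\eim{v'}\circ\stkc(w')\to\stkc(w')\circ\eim v$ from the definition of proper stack, applied to the square above with corner $W$ — one has $(\eim v\objc)|_W\simeq \eim{v'}(\objc|_{V\cap W})$ in $\stkc(W)$. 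Then the adjunction $\eim{v'}\dashv\stkc(v')$ on the site of $W$ supplies a bijection
\[
\hom[\stkc(W)]\bigl(\eim{v'}(\objc|_{V\cap W}),\objc'|_W\bigr)\;\simeq\;\hom[\stkc(V\cap W)]\bigl(\objc|_{V\cap W},(\objc'|_W)|_{V\cap W}\bigr),
\]
and $(\objc'|_W)|_{V\cap W}=\objc'|_{V\cap W}$ by the (lax) restriction coherence. Composing these identifications gives the desired bijection on sections over $W$.

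\emph{Naturality and compatibility.} The remaining work is to check that this bijection is natural in $W$, i.e.\ compatible with the restriction maps of the two presheaves, so that it assembles into an isomorphism of sheaves; then that it is $\shr|_U$-linear, which is immediate since every ingredient (zero-extension, the adjunction unit/counit, restriction) is an $\shr$-linear functor or transformation and the adjunction bijection for a linear adjunction is linear. Naturality reduces to the stated compatibility of the isomorphisms $\eim{v'}\circ\stkc(w')\to\stkc(w)\circ\eim v$ for nested squares — precisely the coherence built into the definition of proper stack — together with the triangle identities for the adjunctions; this is a diagram chase with no surprises. Finally, $\oim v$ of a sheaf is a sheaf and $\hom[\stkc|_U](\eim v\objc,\objc')$ is a sheaf because $\stkc$ is separated and satisfies descent, so an isomorphism of the underlying presheaves is an isomorphism of sheaves.

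\emph{Main obstacle.} The genuine (if mild) difficulty is bookkeeping the lax coherence data: the restriction functors of a prestack compose only up to invertible transformation, so identities like $(\objc'|_W)|_{V\cap W}=\objc'|_{V\cap W}$ and $(\eim v\objc)|_W\simeq\eim{v'}(\objc|_{V\cap W})$ are canonical isomorphisms rather than equalities, and one must confirm that all these isomorphisms fit together without introducing an obstruction. This is guaranteed by the cocycle conditions on the lax structure and by the compatibility axiom for $\eim{(\cdot)}$ in the definition of a proper stack, so the verification is routine but must be carried out with care; everything else follows formally from the adjunction $\eim v\dashv\stkc(v)$.
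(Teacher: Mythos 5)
Your proof is correct: the paper states this lemma without proof, and your argument — identifying sections over $W\subset U$ of both sides, using the base-change isomorphism $\eim{v'}\circ\stkc(w')\isoto\stkc(w)\circ\eim v$ from the definition of proper stack to rewrite $(\eim v\objc)|_W$ as $\eim{v'}(\objc|_{V\cap W})$, and then applying the adjunction $\eim{v'}\dashv\stkc(v')$ — is exactly the intended one. The remaining naturality and $\shr$-linearity checks are, as you say, routine consequences of the coherence data in the definition of a proper stack and of the linearity of the adjunction.
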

Recall that proper stacks are stacks.

\begin{lemma}\label{lem:modproper}
For any $\shr$-stack $\stkc$, the $\shr$-stack $\stkMod(\stkc)$ is proper and complete.
\end{lemma}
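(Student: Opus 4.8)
\textbf{Proof strategy for Lemma~\ref{lem:modproper}.}
The plan is to verify the two clauses separately, handling completeness first since it is essentially formal, and then concentrating on the proper-stack structure, which is where the real content lies. For completeness: by definition \eqref{eq:Mod}, $\stkMod(\stkc)$ is a stack of $\shr$-linear functor categories $\stkFun[\shr](\stkc|_U,\stkMod(\shr)|_U)$. Since $\stkMod(\shr)(V)=\Mod(\shr|_V)$ is a complete abelian category and limits of $\shr$-modules are computed sectionwise, limits in a functor category into $\stkMod(\shr)$ are computed objectwise in the target; hence each $\stkMod(\stkc)(U)$ is complete, and the restriction functors, being given by precomposition, commute with these objectwise limits. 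This gives completeness with no difficulty.

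For the proper-stack structure I would proceed as follows. Separatedness is part of $\stkMod(\stkc)$ being a stack (it is a stack because $\stkMod(\shr)$ is and functor stacks inherit descent). Cocompleteness follows exactly as completeness did, since $\Mod(\shr|_V)$ is cocomplete and colimits in the functor category are again objectwise in $\stkMod(\shr)$. The crux is to produce, for each open inclusion $v\colon V\hookrightarrow U$, the fully faithful left adjoint zero-extension
\[
\eim v\colon \stkMod(\stkc)(V)\to\stkMod(\stkc)(U),
\]
and to check the base-change isomorphism for a square of open inclusions. The key idea is that zero-extension for $\stkMod(\stkc)$ should be built from zero-extension for $\stkMod(\shr)$. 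The stack $\stkMod(\shr)$ is proper: for $v\colon V\hookrightarrow U$ one has the zero-extension $j_! = \eim v\colon \Mod(\shr|_V)\to\Mod(\shr|_U)$, left adjoint and fully faithful to the restriction $\shm\mapsto\shm|_V$, with the standard base-change for open immersions. Given $\shn\in\stkMod(\stkc)(V)$, that is an $\shr|_V$-functor $\stkc|_V\to\stkMod(\shr|_V)$, I would define $\eim v\shn$ on $\stkc|_U$ by postcomposing the restriction-to-$V$ with $\shn$ and then applying $j_!$, i.e.~heuristically $(\eim v\shn)(\objc)=j_!\bigl(\shn(\objc|_V)\bigr)$ for $\objc\in\stkc(W)$, $W\subset U$. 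One must check this is a well-defined $\shr|_U$-functor (using functoriality of $\shn$ on morphisms of $\stkc|_V$ and naturality of $j_!$), and then establish the adjunction $\Hom(\eim v\shn,\shm)\simeq\Hom(\shn,\shm|_V)$ in $\stkMod(\stkc)$ by reducing, objectwise over $\stkc$, to the adjunction $(j_!,\,(\cdot)|_V)$ for $\shr$-modules; full faithfulness of $\eim v$ then follows from full faithfulness of $j_!$ together with the fact that $(\eim v\shn)|_V\simeq\shn$. The base-change isomorphism $\eim{v'}\circ\stkMod(\stkc)(w')\to\stkMod(\stkc)(w)\circ\eim v$ is checked objectwise and reduces to the corresponding base-change isomorphism in $\stkMod(\shr)$.

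The main obstacle I anticipate is a coherence/bookkeeping issue rather than a deep one: making precise that $\eim v$, defined by the formula $\objc\mapsto j_!(\shn(\objc|_V))$ on objects of varying open sets, genuinely assembles into an $\shr|_U$-linear functor out of $\stkc|_U$ — this requires tracking the lax/pseudo structure of the prestack $\stkc$ (the invertible transformations relating iterated restrictions) through $\shn$ and through $j_!$, and verifying the relevant cocycle identities. A clean way to sidestep much of this is to invoke Lemma~\ref{lem:stktensadj} and the identification $\stkMod(\stkc)\equi\stkFun[\shr](\stkc,\stkMod(\shr))$: properness of a functor stack $\stkFun[\shr](\stkc,\stkd)$ into a proper stack $\stkd$ can be argued abstractly, with $\eim v$ induced by postcomposition with the zero-extension of $\stkd$, and the adjunction and base change inherited termwise. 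I would organize the write-up around this reduction, so that the only thing genuinely checked by hand is that postcomposition with a fully faithful left adjoint yields a fully faithful left adjoint on functor stacks, together with the compatibility square — both of which are formal consequences of the pointwise statements for $\stkMod(\shr)$.
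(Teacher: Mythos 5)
Your proposal is correct and follows essentially the same route as the paper: the paper likewise notes that $\stkMod(\shr)$ is complete, cocomplete and proper, and then observes that $\stkMod(\stkc)=\stkFun[\shr](\stkc,\stkMod(\shr))$ inherits these structures objectwise, with the zero-extension given by exactly your formula, $(\eim v\shm)(\objc)=\eim u(\shm(\objc|_{V\cap W}))$ for $u\colon V\cap W\hookrightarrow U$. Your identification of the sheaf-theoretic extension by zero as the fully faithful \emph{left} adjoint of restriction is the correct reading, consistent with the paper's definition of a proper stack.
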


\begin{proof}
Recall first that $\stkMod(\shr)$ is complete and cocomplete. It is also
proper. In fact, for $v\colon V \hookrightarrow U$ an open inclusion, the
restriction functor of $\stkMod(\shr)$ coincides with the sheaf-theoretical
pull-back $\opb v$. This admits the direct image functor $\oim v$ as right
adjoint, and the zero-extension functor $\eim v$ as a left adjoint.

The statement follows, as $\stkMod(\stkc) =
\stkFun[\shr](\stkc,\stkMod(\shr))$ inherits the properties and structures of
$\stkMod(\shr)$. For example, for $v\colon V\hookrightarrow U$ an inclusion of
open subsets, the functor $\eim v \colon\catMod(\stkc|_V) \to
\catMod(\stkc|_U)$ is given by $(\eim v\shm)(\objc) = \eim u(\shm(\objc
|_{V\cap W}))$, where $\shm\colon \stkc|_V\to\stkMod(\shr|_V)$ is a
$\stkc|_V$-module, $W\subset U$ is an open subset, $\objc\in\stkc(W)$, and
$u\colon V\cap W \to U$ is the embedding.
\end{proof}

Let $\stkc$ be an $\shr$-stack.
The (linear) Yoneda embedding
is the full and faithful $\shr$-functor
\begin{equation}
\label{eq:stkyoneda}
\yoneda\stkc \colon \stkc^\op \to \stkMod(\stkc), \quad
\objc\mapsto \shHom[\stkc](\objc, \dummy)
\end{equation}
whose essential image are the functors $\stkc\to\stkMod(\shr)$ which are
representable. 
In analogy with the case $\stkc=\astk\sha$ for $\sha$ an $\shr$-algebra, 
a module $\shm\in\stkMod(\stkc)$ which is representable is called locally free of rank one.

As in the classical case, the full faithfulness of~\eqref{eq:stkyoneda} follows from

\begin{lemma}\label{lem:Yoneda}
For $\shm\in\catMod(\stkc)$ there is an isomorphism of $\stkc$-modules
$$
\shm(\dummy) \simeq \hom[\stkMod(\stkc)](\yoneda \stkc(\dummy), \shm).
$$
\end{lemma}

Let $\stkc$ be an $\shr$-stack. We use the notation
\begin{equation}
\label{eq:bimod}
\bimod\stkc \in \stkMod(\stkc^\op\tens_\shr\stkc)
\end{equation}
for the canonical object $\hom[\stkc](\cdot,\cdot)$.
This corresponds to the Yoneda embedding $\yoneda \stkc$
via the equivalence induced by Lemma~\ref{lem:stktensadj}
\[ 
\stkMod(\stkc^\op\tens_\shr\stkc) \equi \stkFun[\shr](\stkc^\op,\stkMod(\stkc)).
\] 
If $\stkc=\astk\sha$, the object in \eqref{eq:bimod} coincides with
$\sha$, considered as a bimodule over itself. 
If $\stkc$ is an invertible $\shr$-algebroid, then $\stkc^\op\tens_\shr\stkc\equi[\shr] \astk \shr$ 
and $\stkc$ is isomorphic to $\shr$ as a bimodule over itself.

\begin{lemma}\label{lem:ZMod}
For $\stkc$ an $\shr$-stack, there is a natural isomorphism of $\shr$-algebras
$Z(\stkc) \simeq Z(\stkMod(\stkc))$.
\end{lemma}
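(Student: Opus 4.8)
The plan is to exhibit natural maps in both directions between $Z(\stkc)$ and $Z(\stkMod(\stkc))$ and check they are mutually inverse $\shr$-algebra morphisms. Recall that $\stkMod(\stkc) = \stkFun[\shr](\stkc,\stkMod(\shr))$, so an endo-transformation of $\id_{\stkMod(\stkc)}$ assigns to each module $\shm\colon\stkc\to\stkMod(\shr)$ a morphism $\shm\to\shm$ in $\stkMod(\stkc)$, i.e.~a family $\{\shm(\objc)\to\shm(\objc)\}_{\objc\in\stkc}$ natural in $\objc$ and in $\shm$. Conversely, a section $\lambda\in Z(\stkc)(U)$ is a family of endomorphisms $\{\lambda_\objc\colon\objc\to\objc\}$ for $\objc\in\stkc(V)$, $V\subset U$, compatible with all morphisms of $\stkc$.

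First I would define the map $Z(\stkc)\to Z(\stkMod(\stkc))$: given $\lambda\in Z(\stkc)(U)$ and a $\stkc|_U$-module $\shm$, the element $\lambda$ induces for each $\objc\in\stkc$ the morphism $\shm(\lambda_\objc)\colon\shm(\objc)\to\shm(\objc)$; naturality of $\lambda$ together with functoriality of $\shm$ shows that $\objc\mapsto\shm(\lambda_\objc)$ is an endo-transformation of $\shm$, and compatibility with module morphisms $\shm\to\shn$ is immediate since such a morphism is itself a transformation of functors, hence commutes with $\shm(\lambda_\objc)$ by naturality. This assignment is visibly a morphism of sheaves of $\shr$-algebras. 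For the reverse direction, I would use the Yoneda embedding \eqref{eq:stkyoneda}: given $\mu\in Z(\stkMod(\stkc))(U)$, evaluate it on the representable modules $\yoneda\stkc(\objc) = \shHom[\stkc](\objc,\dummy)$ to get morphisms $\mu_{\yoneda\stkc(\objc)}\colon\yoneda\stkc(\objc)\to\yoneda\stkc(\objc)$ in $\stkMod(\stkc)$, and transport them back through the full faithfulness of $\yoneda\stkc$ to obtain endomorphisms of $\objc$ in $\stkc^\op$, i.e.~of $\objc$ in $\stkc$. The naturality built into $\mu$ (with respect to all morphisms of representable modules, which by full faithfulness are exactly the morphisms of $\stkc^\op$) yields that the resulting family lies in $Z(\stkc)(U)$.

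Next I would check the two constructions are inverse to each other. In one direction this is essentially the Yoneda lemma: starting from $\lambda\in Z(\stkc)$, the induced central element of $\stkMod(\stkc)$ acts on $\yoneda\stkc(\objc)=\shHom[\stkc](\objc,\dummy)$ by post-composition-adjusted precomposition with $\lambda_\objc$, which under the identification $\shHom[\stkMod(\stkc)](\yoneda\stkc(\objc),\yoneda\stkc(\objc'))\simeq\shHom[\stkc](\objc',\objc)$ of Lemma~\ref{lem:Yoneda} corresponds precisely to $\lambda_\objc$ again. In the other direction, given $\mu\in Z(\stkMod(\stkc))$, one must verify that the central element of $\stkc$ it produces induces $\mu$ back on all of $\stkMod(\stkc)$, not merely on representables; here I would invoke Lemma~\ref{pr:YMprstk}, which writes an arbitrary module $\shm$ as a colimit $\ilim_{(u,\objc,m)} \eim u\yoneda\stkc(\objc)$ of (zero-extensions of) representables, together with the fact that a central transformation of $\id_{\stkMod(\stkc)}$ is determined by its values on any generating family and commutes with colimits and zero-extensions by naturality. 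Both constructions manifestly respect the $\shr$-algebra structure (addition and composition of transformations correspond to addition and composition of the induced endomorphisms), and they are compatible with restriction, hence define an isomorphism of sheaves of $\shr$-algebras.

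The main obstacle I anticipate is the "other direction" of the inverse check, namely propagating the agreement from representable modules to arbitrary modules: one needs that a global section of $Z(\stkMod(\stkc))$ is rigid enough to be reconstructed from its restriction to the full subcategory of representables. This is exactly what Lemmas~\ref{le:stkfunlim} and~\ref{pr:YMprstk} are designed to supply — every module is canonically a colimit of zero-extended representables, and an endo-transformation of the identity, being natural, is compatible with the structure maps of that colimit — but assembling this carefully, keeping track of the comma category $\stkc^\op_\shm$ and the zero-extension functors $\eim u$, is where the real work lies. Everything else (well-definedness, $\shr$-linearity, sheaf-compatibility) is routine diagram chasing.
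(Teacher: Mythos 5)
Your proposal is correct and follows essentially the same route as the paper: one direction via the Yoneda embedding (full faithfulness identifying central endo-transformations on representables with $Z(\stkc^\op)\simeq Z(\stkc)$), the other via the presentation of an arbitrary module as a colimit of zero-extended representables from Lemma~\ref{pr:YMprstk}. Your direct formula $\shm\mapsto\{\shm(\lambda_\objc)\}_{\objc}$ for the map $Z(\stkc)\to Z(\stkMod(\stkc))$ is just an unwound form of the paper's colimit expression $\tilde\transfc(\shm)=\ilim\eim u\,\yoneda\stkc(\transfc(\objc))$, so the two proofs coincide in substance.
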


\begin{proof}
Recall that the $2$-functor $\stkMod(\dummy)$ defines a morphism of $\shr$-algebras 
$Z(\stkc)  \to Z(\stkMod(\stkc))$, sending a transformation $\transfd$ of $\id_{\stkc}$ to the transformation of
$\id_{\stkMod(\stkc)}$ whose morphism associated to $\shn\in\stkMod(\stkc)$ is $\id_\shn \mathop\bullet \transfd$.
(Here $\bullet$ denotes the horizontal composition of transformations). 
Hence we get a commutative diagram
\[
\xymatrix@R=2em@C=5em{
Z(\stkc) \ar@{=}[d] \ar[r] & Z(\stkMod(\stkc)) \ar[d]^{\dummy \mathop\bullet \id_{\yoneda \stkc}}  \\
Z(\stkc^\op) \ar[r]^{\id_{\yoneda \stkc} \mathop\bullet \dummy \qquad} & \shendo[{\stkFun[\shr](\stkc^\op,\stkMod(\stkc))}](\yoneda \stkc)
}
\]
where $\id_{\yoneda \stkc} \mathop\bullet \dummy$ is an isomorphism, since~\eqref{eq:stkyoneda} is fully faithful, and 
$\dummy \mathop\bullet \id_{\yoneda \stkc}$ is injective, since by Lemma~\ref{lem:Yoneda} a transformation $\transfc$ of $\id_{\stkMod(\stkc)}$ is completely determined by $\transfc\mathop\bullet \id_{\yoneda \stkc}$.
\end{proof}

\subsection{Operations}

Let $\stkc$ be an $\shr$-stack.
As for modules over a sheaf of algebras, there is a natural $\shr$-functor
\begin{equation}
\label{eq:tensC}
\tens_\stkc \colon \stkMod(\stkc^\op) \tens_\shr
\stkMod(\stkc) \to \stkMod(\shr).
\end{equation}
This is discussed in \cite{Mit85} when $X$ is reduced to a point.
In order to explain how this extends to stacks, we need some preparation.

Denote by $\stkc/X$ the Grothendieck construction associated with $\stkc$. Recall that
objects of $\stkc/X$ are pairs $(u,\objc)$ with $u\colon U\to X$
an open inclusion, and $\objc\in\stkc(U)$. Morphisms $c\colon (u,\objc)
\to (u',\objc')$ are defined only if $U'\subset U$, and in that case are given
by morphisms $\objc\vert_{U'}\to \objc'$ in $\stkc(U')$. For $c'\colon
(u',\objc') \to (u'',\objc'')$ another morphism, the composition\footnote{Here we
denote for short by $c|_{U''}$ the composite $\objc|_{U''} \isofrom
\objc|_{U'}|_{U''} \to[c|_{U''}] \objc'|_{U''}$.} 
is given by $c' \circ c|_{U''}$.

\begin{notation}
Let $\catc$ be a category. We denote by $\Moro \catc$ the category whose objects are morphisms $c\colon \objc \to \objc'$ in $\catc$ and whose morphisms $c\to d$ are pairs $(e,e')$ of morphisms in $\catc$ such that $c=e'\circ d\circ e$. This is visualised by the diagram
\[
\xymatrix{
\objc \ar[r]^c \ar[d]_e & \objc'  \\
\objd \ar[r]^d & \objd' \ar[u]_{e'}.
}
\]
\end{notation}

The following lemma is a stack-theoretical analogue of \cite[Lemma 2.1.15]{KS06}.

\begin{lemma}
\label{le:stkfunlim}
Let $\shm,\shn\in\catMod(\stkc)$. Then
\begin{itemize}
\item[(i)] 
the assignment
\[
\bigl( (u,\objc)
\to[c] (u',\objc') \bigr) \mapsto
\hom[\shr](\oim u\shm(\objc), \oim u' \shn(\objc'))
\]
defines a functor $(\Moro \stkc/X)^\op \to \catMod(\shr)$;
\item[(ii)] 
there is an isomorphism in $\catMod(\shr)$
\begin{equation}
\label{eq:MhomN}
\hom[\stkMod(\stkc)](\shm, \shn) 
\simeq \plim[{\bigl( (u,\objc)
\to[c] (u',\objc') \bigr)\in \Moro \stkc/X}] \hom[\shr](\oim u\shm(\objc), \oim u' \shn(\objc')).
\end{equation}
\end{itemize}
\end{lemma}

\begin{proof}
(i) Let us check that a morphism $(e,e')$ in $\Moro \stkc/X$, visualised by the diagrams
\[
\xymatrix{
U & U' \ar[l] \ar[d]  \\
V \ar[u] & V' \ar[l],
}
\qquad
\xymatrix{
(u,\objc) \ar[r]^c \ar[d]_e & (u',\objc')  \\
(v,\objd) \ar[r]^d & (v',\objd') \ar[u]_{e'},
}
\]
induces a morphism in $\catMod(\shr)$
\begin{equation}
\label{eq:moree'}
\hom[\shr](\oim v\shm(\objd), \oim v' \shn(\objd')) \to
\hom[\shr](\oim u\shm(\objc), \oim u' \shn(\objc')).
\end{equation}
Consider the inclusions of open subsets
\[
\xymatrix@R=1em@C=.5em{
X  && U \ar[ll]_u\\
&& V \ar[ull]^v \ar[u]_i.
}
\]
The morphism $e\colon \objc|_V \to \objd$ induces a morphism
\[
\oim u\shm(\objc) \to \oim u\oim i\opb i\shm(\objc) \simeq \oim v\shm(\objc|_V) \to \oim v\shm(\objd).
\]
Similarly, $e'\colon \objd'|_{U'} \to \objc'$ induces a morphism $\oim v'\shn(\objd') \to \oim u'\shn(\objc')$.
Then \eqref{eq:moree'} is obtained by left and right composition with these morphisms.

(ii) It is enough to prove that the natural morphism in $\catMod(\shr(X))$
\begin{align*}
\Hom[\stkMod(\stkc)](\shm, \shn) 
&\to \plim[c\in \Moro \stkc/X] \Hom[\shr|_{U'}](\shm(\objc|_{U'}), \shn(\objc')) \\
&\simeq \plim[c\in \Moro \stkc/X] \Hom[\shr](\oim u\shm(\objc), \oim u' \shn(\objc')) 
\end{align*}
is an isomorphism.
The proof of this fact follows the same arguments as those in the proof of \cite[Lemma 2.1.15]{KS06}.
\end{proof}

For $\shm\in\catMod(\stkc)$ and $\shp\in\catMod(\stkc^\op)$,
similarly as above,
the assignment
\[
\bigl( (u,\objc)
\to[c] (u',\objc') \bigr) \mapsto
\eim u'\shp(\objc') \tens[\shr] \oim u \shm(\objc)
\]
defines a functor $\Moro \stkc/X \to \catMod(\shr)$ and we set
\begin{equation}
\label{eq:PtensM}
\shp \tens[\stkc] \shm =
\ilim[{\bigl( (u,\objc)
\to[c] (u',\objc') \bigr)\in \Moro \stkc/X}] \eim u'\shp(\objc') \tens[\shr] \oim u \shm(\objc).
\end{equation}

For $\sha$ an $\shr$-algebra and $\stkc=\astk \sha$, this is the usual tensor product of 
$\shm\in\catMod(\sha)$ and $\shp\in\catMod(\sha^\op)$. For example, when $X$ is reduced to a point, this amounts to present $\shp\tens_\sha \shn$ as the
quotient of $\DSum_{a\in\sha}\shp\tens_\shr \shn$ by the subgroup generated by the elements $(p\tens m)_a - (p\tens am)_1$
and $(p\tens m)_a - (pa\tens m)_1$ for $a\in\sha$, $p\in\shp$ and $m\in\shm$. (The
indices denote the direct summand to which the elements belong. These generators correspond to the morphisms
$(a,1),(1,a)\colon a\to 1$ in $\Moro \astk \sha$.)

\begin{lemma} 
$\shp \tens[\stkc] \shm$ is a representative of the functor 
\[
\hom[\stkMod(\stkc)](\shm,\hom[\shr](\shp,\cdot)) \colon \stkMod(\shr) \to \stkMod(\shr),
\]
where, for any $\shl\in\stkMod(\shr)$, we denote by $\hom[\shr](\shp,\shl)$ the $\stkc$-module
$\hom[\shr](\shp(\dummy), \shl)$.
\end{lemma}

\begin{proof}
By Lemma~\ref{le:stkfunlim}, for any $\shl\in\stkMod(\shr)$ there are isomorphisms
\begin{align*}
\hom[\stkMod(\stkc)]&(\shm,\hom[\shr](\shp,\shl))\simeq \\
&\simeq \plim[c\in \Moro \stkc/X] \hom[\shr](\oim u\shm(\objc), \oim u' \hom[\shr|_{U'}](\shp(\objc'),\shl|_{U'})) \\
&\simeq \plim[c\in \Moro \stkc/X] \hom[\shr](\oim u\shm(\objc),  \hom[\shr](\eim u'\shp(\objc'),\shl)) \\
&\simeq \plim[c\in \Moro \stkc/X] \hom[\shr](\eim u'\shp(\objc') \tens[\shr] \oim u\shm(\objc),\shl) \\
&\simeq \hom[\shr]\left(\ilim[c\in \Moro \stkc/X] \eim u'\shp(\objc') \tens[\shr] \oim u\shm(\objc),\shl\right)\\
&= \hom[\shr](\shp \tens[\stkc] \shm,\shl).
\end{align*}
\end{proof}

Uniqueness of representatives imply the functoriality of the assignment $(\shp,\shm)\mapsto \shp\tens[\stkc]\shm$. We have
thus defined the functor \eqref{eq:tensC}.

As for modules over a sheaf of algebras, let us use the short hand notation
\begin{equation}
\label{eq:homC}
\hom[\stkc](\cdot,\cdot) \colon \stkMod(\stkc)^\op \tens_\shr \stkMod(\stkc) \to \stkMod(\shr).
\end{equation}
for the internal hom-functor $\hom[\stkMod(\stkc)](\cdot,\cdot)$.

\begin{notation}
\label{not:tens}
We use the same notation
\begin{align*}
\hom[\stkc] &\colon \stkMod(\stkc\tens_\shr\stkd^\op)^\op \tens_\shr
\stkMod(\stkc\tens_\shr\stke) \to \stkMod(\stkd\tens_\shr\stke), \\
\tens_\stkc & \colon \stkMod(\stkc^\op\tens_\shr\stkd) \tens_\shr
\stkMod(\stkc\tens_\shr\stke) \to \stkMod(\stkd\tens_\shr\stke)
\end{align*}
for the $\shr$-functors obtained by ``picking up operators'' (in the sense of \cite[page 15]{Mit72}) 
from the $\shr$-functors \eqref{eq:homC} and \eqref{eq:tensC}.
\end{notation}

These functors satisfy the relations \eqref{eq:MhomN} and \eqref{eq:PtensM}. For example, let
$\shp\in\stkMod(\stkc^\op\tens_\shr\stkd)$ and $\shm\in\stkMod(\stkc\tens_\shr\stke)$. Consider them as functors $\shp\colon
\stkc^\op \to \stkMod(\stkd)$ and $\shm\colon \stkc \to \stkMod(\stke)$. Then $\shp\tens[\stkc]\shm$ satisfies
\eqref{eq:PtensM}, where the operations $\eim{u'}$ and $\eim u$ are those of the proper stacks $\stkMod(\stkd)$ and
$\stkMod(\stke)$, respectively, and the tensor product is the natural functor 
\[ 
\tens_\shr \colon \stkMod(\stkd) \tens_\shr
\stkMod(\stke) \to \stkMod(\stkd\tens_\shr\stke). 
\]

The standard formulas concerning the usual hom-functor and tensor product hold.
For example,

\begin{lemma}
\label{le:tenshomadj}
For $\shm\in\stkMod(\stkc^\op\tens_\shr\stkd)$,
$\shn\in\stkMod(\stkc\tens_\shr\stke)$, and
$\shp\in\stkMod(\stkd\tens_\shr\stkf)$, there is an isomorphism
in $\stkMod(\stke^\op\tens_\shr\stkf)$
\[ 
\hom[\stkd](\shm\tens_\stkc \shn, \shp) \simeq 
\hom[\stkc](\shn, \hom[\stkd](\shm,\shp)).
\]
\end{lemma}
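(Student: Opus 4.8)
The plan is to construct the isomorphism directly on sections and then observe that it is compatible with restrictions, so that it glues to an isomorphism of $\stkMod(\stke^\op\tens_\shr\stkf)$-objects. Since by Notation~\ref{not:tens} the three operations $\hom[\stkc]$, $\hom[\stkd]$, $\tens_\stkc$ are defined by "picking up operators" from the basic $\shr$-valued functors, the first step is to reduce the claimed adjunction to its avatar where all the target stacks are $\stkMod(\shr)$: that is, for $\shm\in\stkMod(\stkc^\op)$, $\shn\in\stkMod(\stkc)$, $\shp\in\stkMod(\shr)$ one wants a natural isomorphism $\hom[\stkMod(\shr)](\functt_\shn(\shm),\shp)\simeq\hom[\stkMod(\stkc^\op)](\shm,\functh_\shp(\shn))$, where $\functh_\shp(\shn)=\hom[\stkMod(\stkc)](\shn,\shp)$ is regarded as a $\stkc^\op$-module via Theorem~\ref{th:kan}(ii). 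Once the extra "bookkeeping" indices in $\stkd,\stke,\stkf$ are carried along formally, the general statement follows from this special case by naturality.

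The second step is to prove that special case using the colimit presentation of $\functt_\shn$ from Theorem~\ref{th:kan}(i). Writing $\shm\simeq\ilim[(u,\objc,p)\in\stkc_\shm]\eim u\yoneda{\stkc^\op}(\objc)$ via Lemma~\ref{pr:YMprstk} (applied to $\stkc^\op$), we get $\functt_\shn(\shm)\simeq\ilim[(u,\objc,p)]\eim u\,\shn(\objc)$. Then
\[
\hom[\stkMod(\shr)](\functt_\shn(\shm),\shp)
\simeq\plim[(u,\objc,p)]\hom[\stkMod(\shr)](\eim u\,\shn(\objc),\shp)
\simeq\plim[(u,\objc,p)]\oim u\,\hom[\stkMod(\shr|_U)](\shn(\objc),\shp|_U),
\]
the first isomorphism because $\functt_\shn$ commutes with colimits and zero-extensions so $\hom$ out of it turns these into limits, and the second because $\eim u$ is left adjoint to $(\cdot)|_U$ together with Lemma~\ref{eq:vpropersheaf}. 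On the other hand, by Lemma~\ref{le:stkfunlim} applied to $\stkc^\op$, $\hom[\stkMod(\stkc^\op)](\shm,\functh_\shp(\shn))\simeq\plim[(u,\objc,p)\in\stkc_\shm]\oim u\,\functh_\shp(\shn)(\objc)$, and by Theorem~\ref{th:kan}(ii) one has $\functh_\shp(\shn)(\objc)=\hom[\stkMod(\stkc)](\yoneda\stkc(\objc)^?,\dots)$—more precisely the value of the Kan-extended $\hom$ functor at the representable, which Theorem~\ref{th:kan}(ii) identifies with $\hom[\stkMod(\stkc|_U)](\shn|_U,\dots)$ evaluated appropriately, giving exactly $\oim u\,\hom[\stkMod(\stkc|_U)](\shn(\objc),\shp|_U)$ after unwinding the representability. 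Comparing the two expressions term by term over the same index category $\stkc_\shm$ yields the isomorphism, and one checks it is induced by the evident "adjunction" map $\shn\to\hom[\stkd](\shm,\shm\tens_\stkc\shn)$, hence natural in all three variables.

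The third step is purely formal: reinsert the indices $\stkd$, $\stke$, $\stkf$. Since $\stkMod(\stkc^\op\tens_\shr\stkd)=\stkFun[\shr](\stkc^\op\tens_\shr\stkd,\stkMod(\shr))\equi\stkFun[\shr](\stkc^\op,\stkMod(\stkd))$ by Lemma~\ref{lem:stktensadj}, an object $\shm\in\stkMod(\stkc^\op\tens_\shr\stkd)$ is a $\stkd$-object in $\stkc^\op$-modules, and the operations in Notation~\ref{not:tens} are by construction the ones obtained by applying the bifunctors $\tens_\stkc$, $\hom[\stkc]$, $\hom[\stkd]$ "fibrewise" in these extra variables; the associativity/adjunction isomorphism of the previous step is natural and therefore promotes to an isomorphism in $\stkMod(\stke^\op\tens_\shr\stkf)$.

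The main obstacle I expect is not any single computation but keeping the Kan-extension identifications of Theorem~\ref{th:kan} straight: one must be careful that $\functh_\shp(\shn)$, a priori only a $\stkc^\op$-module by part (ii) of that theorem, really is the module whose value on a representable $\yoneda\stkc(\objc)$ reproduces $\shn(\objc)$-paired-with-$\shp$ in the right variance, and that the colimit index category $\stkc_\shm$ appearing on the $\functt$ side coincides (not merely is cofinal with) the limit index category on the $\hom$ side under the Yoneda presentation. Getting these two "comma category" descriptions to literally match—rather than match up to a cofinality argument—is where the care is needed; everything else is the standard adjunction bookkeeping, exactly as in the classical ring-theoretic case \cite{Mit65,KS06}.
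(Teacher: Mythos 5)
Your steps 1--2 do correctly establish the fully stripped adjunction
$\hom[\shr](\shm\tens_\stkc\shn,\shp)\simeq\hom[\stkc](\shn,\hom[\shr](\shm,\shp))$
for $\shm\in\stkMod(\stkc^\op)$, $\shn\in\stkMod(\stkc)$, $\shp\in\stkMod(\shr)$, by decomposing $\shm$ into representables via Lemma~\ref{pr:YMprstk} and comparing the two limits over $\stkc_\shm$. The gap is in step 3. Evaluating fibrewise does dispose of $\stke$ and $\stkf$: these are genuine spectator variables, carried along untouched by the operations of Notation~\ref{not:tens}. But $\stkd$ is \emph{not} a spectator --- it is the variable contracted by the outer $\hom[\stkd]$ on the left-hand side and by the inner $\hom[\stkd]$ on the right-hand side. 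After stripping $\stke$ and $\stkf$ you are left with the actual content of the lemma, namely that for a bimodule $\shm\in\stkMod(\stkc^\op\tens_\shr\stkd)$ the functor $\shm\tens_\stkc(\dummy)\colon\stkMod(\stkc)\to\stkMod(\stkd)$ is left adjoint to $\hom[\stkd](\shm,\dummy)$. This does not follow ``fibrewise'' from the case $\stkd=\astk\shr$: by Lemma~\ref{le:stkfunlim}, $\hom[\stkd]$ of two $\stkd$-modules is a limit over a comma category, not an objectwise application of $\hom[\shr]$; and your colimit decomposition of $\shm$ as a mere $\stkc^\op$-module destroys exactly the $\stkd$-structure that the outer $\hom[\stkd]$ needs, so the step-2 computation cannot simply be rerun with $\stkd$ present. (The reduction can be salvaged by an explicit limit interchange --- writing $\hom[\stkd](\dummy,\shp)$ as a limit and pulling it through $\hom[\stkc](\shn,\dummy)$, which preserves limits --- but that is a real missing argument, not bookkeeping.)

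The paper avoids this by decomposing along the other variable: it presents $\shm\tens_\stkc\shn$ as $\ilim \eim u\,(\shm\circ\functf)$ over the comma category of $\shn$ viewed in $\stkFun[\shr](\stke,\stkMod(\stkc))$ via $\stkFun[\shr](\stke,\yoneda\stkc)$. The building blocks $\shm\circ\functf$ are then honest $(\stke\tens_\shr\stkd)$-modules, $\hom[\stkd]$ of them against $\shp$ is handled by the $\eim u\dashv(\cdot)|_U$ adjunction alone, and both sides of the lemma become the same limit $\plim\oim u\hom[\stkd](\shm\circ\functf,\shp|_U)$ --- no reduction on $\stkd$ is ever needed, and $\stke$, $\stkf$ are carried along for free. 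A secondary point: your special case is the adjunction in the $\shm$-variable, whereas the lemma (as used in Lemma~\ref{le:radjilim} and Theorem~\ref{th:premorita}) is the adjunction in the $\shn$-variable; in the stripped case these coincide only after invoking the symmetry of the two Kan-extension presentations of $\shm\tens_\stkc\shn$, which you should at least state.
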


Recall the notation $\bimod\stkc \in \stkMod(\stkc^\op\tens_\shr\stkc)$ in \eqref{eq:bimod}.
By Lemma~\ref{lem:Yoneda} the functors $\hom[\stkc] (\bimod\stkc,\dummy)$, and hence also
$\bimod\stkc\tens_\stkc (\dummy)$, are isomorphic to the identity.

Using \eqref{eq:PtensM} with $\shp=\stkc$, we get

\begin{lemma}
\label{pr:YMprstk}
For $\shm\in\catMod(\stkc)$ there is an isomorphism in $\catMod(\stkc)$
\[ 
\shm \simeq
\ilim[{\bigl( (u,\objc)
\to[c] (u',\objc') \bigr)\in \Moro \stkc/X}] \eim u'\yoneda \stkc(\objc') \tens[\shr] \oim u \shm(\objc).
\]
\end{lemma}

\subsection{Morita equivalence}\label{sse:Morita}

Let us discuss how classical Morita theory extends to linear stacks.

Let $\stkc$ and $\stkd$ be $\shr$-stacks.
Denote by $\stkFun[\shr]^r(\stkMod(\stkc),\stkMod(\stkd))$ the stack of 
$\shr$-functors admitting a right adjoint. 
The Eilenberg-Watts theorem for $\shr$-stacks holds:

\begin{theorem}
\label{th:premorita}
\begin{itemize}
\item[(i)] 
The $\shr$-functor
\[ 
\stkMod(\stkc^\op\tens_\shr\stkd) \to
\stkFun[\shr]^r(\stkMod(\stkc),\stkMod(\stkd)), \quad \shp \mapsto
\shp\tens_\stkc(\dummy),
\] 
is an equivalence.
\item[(ii)] 
For $\shp\in\stkMod(\stkc^\op\tens_\shr\stkd)$ and
$\shq\in\stkMod(\stkd^\op\tens_\shr\stke)$ one has an isomorphism in $\stkFun[\shr]^r(\stkMod(\stkc),\stkMod(\stke))$
\[
(\shq\tens[\stkd]\shp) \tens[\stkc] (\cdot) \simeq (\shq \tens[\stkd]
(\cdot)) \circ (\shp \tens[\stkc] (\cdot)).
\]
\end{itemize}
\end{theorem}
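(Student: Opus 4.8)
The plan is to prove (i) and (ii) together, building the inverse to $\shp\mapsto \shp\tens_\stkc(\dummy)$ out of the data already assembled in the Kan-extension package (Theorem~\ref{th:kan}) and in Lemma~\ref{le:radjilim}. First, for (i) I would observe that the functor $\shp\mapsto \shp\tens_\stkc(\dummy)$ does land in $\stkFun[\shr]^r(\stkMod(\stkc),\stkMod(\stkd))$: by Lemma~\ref{le:tenshomadj} (with $\stke=\astk\shr$) the functor $\shp\tens_\stkc(\dummy)$ has right adjoint $\hom[\stkd](\shp,\dummy)$. For the quasi-inverse I would send an $\shr$-functor $\functh$ with a right adjoint to $\functh\circ\yoneda\stkc\in\stkMod(\stkc^\op\tens_\shr\stkd)$; the point is that $\yoneda\stkc$ is a functor $\stkc^\op\to\stkMod(\stkc)$, so composing with $\functh\colon\stkMod(\stkc)\to\stkMod(\stkd)$ gives an object of $\stkFun[\shr](\stkc^\op,\stkMod(\stkd))=\stkMod(\stkc^\op\tens_\shr\stkd)$ (here I use Lemma~\ref{lem:stktensadj} to identify the two-variable functor stack with the module stack over the tensor product). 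That these two assignments are mutually quasi-inverse is exactly the computation already carried out in the proof of Lemma~\ref{le:radjilim}: there it is shown that any $\functh$ commuting with colimits and zero-extensions — equivalently, by Lemma~\ref{le:radjilim}, any $\functh$ admitting a right adjoint — satisfies $\functh\simeq(\functh\circ\yoneda\stkc)\tens_\stkc(\dummy)$ via the presentation $\shm\simeq\ilim[(u,\objc,m)\in\stkc_\shm^\op]\eim u\,\yoneda\stkc(\objc)$ of Lemma~\ref{pr:YMprstk}. Conversely, starting from $\shp$, one has $(\shp\tens_\stkc(\dummy))\circ\yoneda\stkc\simeq\shp\tens_\stkc\yoneda\stkc(\dummy)\simeq\shp$, the last isomorphism being the Yoneda identification $\functt_{\yoneda\stkc(\objc)}(\shp)\simeq\shp(\objc)$ read off from Theorem~\ref{th:kan}(i), i.e.\ the $\tens_\stkc$ analogue of $M\otimes_A A\simeq M$. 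One checks naturality of both unit and counit on morphisms and on transformations, and that everything is $\shr$-linear; this is routine and I would only sketch it.

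For (ii), the statement is associativity of composition of right-adjointable functors transported across the equivalence of (i). Here I would argue as follows: both $(\shq\tens_\stkd\shp)\tens_\stkc(\dummy)$ and $(\shq\tens_\stkd(\dummy))\circ(\shp\tens_\stkc(\dummy))$ are $\shr$-functors $\stkMod(\stkc)\to\stkMod(\stke)$ commuting with colimits and zero-extensions — the composite because each factor does, and $\shq\tens_\stkd\shp$ is by definition the result of the functor $\tens_\stkd$ of Notation~\ref{not:tens}, which was built precisely to commute with those operations. By (i) it therefore suffices to check they agree after precomposition with $\yoneda\stkc$, where both sides reduce to $\shq\tens_\stkd\shp$ up to canonical isomorphism (for the composite, use $(\shp\tens_\stkc(\dummy))\circ\yoneda\stkc\simeq\shp$ from the proof of (i), then apply $\shq\tens_\stkd(\dummy)$). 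One then invokes the uniqueness-up-to-unique-isomorphism clause of Theorem~\ref{th:kan}(i) to conclude the two functors are isomorphic, and tracks the isomorphism to see it is the expected associativity constraint.

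The main obstacle I anticipate is bookkeeping rather than conceptual: verifying that the quasi-inverse $\functh\mapsto\functh\circ\yoneda\stkc$ is well defined and $2$-functorial at the level of transformations between functors (the equivalence in (i) is an equivalence of stacks, so one must treat $2$-morphisms), and checking the compatibility of all the colimit-commutation isomorphisms $\eim u$, $\oim u$ through the chain of identifications in Lemma~\ref{pr:YMprstk} and Lemma~\ref{le:radjilim} so that the associativity isomorphism in (ii) is coherent. The only place where something genuinely could go wrong is the interchange of the colimit $\ilim[\stkc_\shm^\op]$ defining a module via Yoneda generators with the functor $\functh$: this is legitimate exactly because $\functh$ is assumed to commute with colimits and zero-extensions, which by Lemma~\ref{le:radjilim} is equivalent to admitting a right adjoint — so restricting to $\stkFun[\shr]^r$ is what makes the whole argument go through, and I would emphasize that point.
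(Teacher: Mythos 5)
Your proposal is correct and follows exactly the route the paper takes (whose proof is only a two-line sketch): the quasi-inverse for (i) is $\functh\mapsto\functh\circ\yoneda\stkc$, both directions of the quasi-inverse check rest on the presentation of modules by Yoneda generators from Lemma~\ref{pr:YMprstk} and the computation in Lemma~\ref{le:radjilim}, and (ii) follows from the uniqueness clause of the Kan extension after checking agreement on $\yoneda\stkc$. Your write-up simply supplies the details the paper leaves implicit.
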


Given an $\shr$-functor $\functh\colon \stkMod(\stkc) \to \stkMod(\stkd)$, we will use the same 
notation $\functh$ for the induced $\shr$-functor, obtained by ``picking up operators'',
$$
\stkMod(\stkc^\op\tens[\shr]\stkc) \to \stkMod(\stkc^\op\tens[\shr]\stkd).
$$

\begin{proof}
(i) Let us show that $\functh\mapsto \functh(\stkc)$ is a quasi-inverse to the functor in the statement.

Since $\shp \simeq \shp\tens_\stkc\stkc$, we only have to prove that $\functh \isoto \functh(\stkc)\tens_\stkc(\dummy)$.
Let $\shm\in\stkMod(\stkc)$. Since $\functh$ has a right adjoint, it commutes with direct limits, proper direct images, and tensor products with $\shr$-modules.
Hence we have
\begin{align*}
\functh(\shm)
&\simeq \functh\left(\ilim[{c \in \Moro \stkc/X}] \eim u'\yoneda \stkc(\objc') \tens[\shr] \oim u \shm(\objc)\right) \\
&\simeq \ilim[{c \in \Moro \stkc/X}] \eim u'\functh(\yoneda \stkc(\objc')) \tens[\shr] \oim u \shm(\objc) \\
&\simeq \ilim[{c \in \Moro \stkc/X}] \eim u'\functh(\stkc)(\objc',\dummy) \tens[\shr] \oim u \shm(\objc) \\
& = \functh(\stkc) \tens[\stkc] \shm,
\end{align*}
where the first isomorphism follows from Lemma~\ref{pr:YMprstk} and we use the fact that $\functh \circ \yoneda \stkc$ identifies to $\functh(\stkc)$ via the equivalence induced by Lemma~\ref{lem:stktensadj}.

(ii) By (i), the statement amounts to the isomorphism in $\stkMod(\stkc^\op\tens_\shr\stke)$
\[
(\shq\tens[\stkd]\shp) \tens[\stkc] \stkc \simeq \shq \tens[\stkd] (\shp \tens[\stkc] \stkc).
\]
\end{proof}

\begin{remark}
Denoting by $\stkFun[\shr]^l(\stkMod(\stkc),\stkMod(\stkd))$ the stack of
$\shr$-functors admitting a left adjoint, one similarly gets an
$\shr$-equivalence
\[ 
\stkMod(\stkc^\op\tens_\shr\stkd) \to
\stkFun[\shr]^l(\stkMod(\stkd),\stkMod(\stkc))^\op, \quad \shp\mapsto
\hom[\stkd](\shp,\dummy),
\] 
and the corresponding commutative diagram as in
Theorem~\ref{th:premorita}~(ii). These constructions are interchanged by
the $\shr$-equivalence
\begin{equation*}
\stkFun[\shr]^r(\stkMod(\stkc),\stkMod(\stkd)) \equi
\stkFun[\shr]^l(\stkMod(\stkd),\stkMod(\stkc))^\op
\end{equation*}
sending a functor to its adjoint.
\end{remark}

\begin{definition}
\label{def:inv}
\begin{itemize}
\item[(i)] 
One says that $\shq\in\stkMod(\stkd^\op\tens_\shr\stkc)$ is an inverse of
$\shp\in\stkMod(\stkc^\op\tens_\shr\stkd)$ if there are isomorphisms of
$\stkc\tens_\shr\stkc^\op$- and $\stkd\tens_\shr\stkd^\op$-modules, respectively, 
\[
\shq\tens_\stkd \shp \simeq \bimod\stkc, \quad 
\shp\tens_\stkc \shq \simeq \bimod\stkd.
\]
\item[(ii)] 
An object $\shp\in\stkMod(\stkc^\op\tens_\shr\stkd)$ is called invertible if
it has an inverse.
\end{itemize}
\end{definition}

One proves (see e.g.~\cite[\S19.5]{KS06}) that $\shp$ is invertible if and
only if one of the following equivalent conditions is satisfied
\begin{itemize}
\item [(i)] 
$\hom[\stkd](\shp,\bimod\stkd)$ is an inverse of $\shp$;
\item [(ii)] 
the functor $\shp\tens[\stkc](\dummy) \colon \stkMod(\stkc) \to
\stkMod(\stkd)$ is an $\shr$-equivalence.
\item [(iii)] 
the functor $\hom[\stkc^\op](\shp,\dummy) \colon \stkMod(\stkc^\op) \to
\stkMod(\stkd^\op)$ is an $\shr$-equiva\-lence.
\end{itemize}

\begin{notation}
For any $\shr$-functor $\functf\colon \stkc \to \stkc'$, we denote by $\stkEndo[\stkc'](\functf)$ the 
$\shr$-stack associated to the separated prestack whose objects on $U\subset X$ are those 
of $\stkc(U)$ and $\Hom(\gamma,\gamma')=\Hom[\stkc'(U)](\functf(\gamma),\functf(\gamma'))$.
\end{notation}
Note that, if $\functf$ is fully faithful, then the natural $\shr$-functor $\stkc \to \stkEndo[\stkc'](\functf)$ induced by $\functf$ is an equivalence. In particular, identifying $\stkc\in\stkMod(\stkc^\op\tens_\shr\stkc)$ with the Yoneda embedding $\yoneda \stkc\colon \stkc^\op \to \stkMod(\stkc)$, one has $\stkc^\op\equi\stkEndo[ \stkMod(\stkc)](\stkc)$.
Moreover, considering $\shp\in\stkMod(\stkc^\op\tens_\shr\stkd)$ as a functor $\stkc^\op \to \stkMod(\stkd)$, the condition of $\shp$ being invertible is further equivalent to
\begin{itemize}
\item [(iv)] \label{pgv}
$\shp$ is a faithfully flat\footnote{$\shp$ is a faithfully flat $\stkd$-module if the functor $(\dummy)\tens[\stkd]\shp$ is faithful and exact.} $\stkd$-module locally of finite presentation\footnote{$\shp$ is a $\stkd$-module of finite presentation if the functor $\hom[\stkd](\shp,\dummy)$ commutes with small filtrant colimits.}
and $\stkc^\op\equi \stkEndo[\stkMod(\stkd)](\shp)$;
\item [(v)] 
$\shp$ is $\stkd$-progenerator\footnote{$\shp$ is $\stkd$-progenerator if the functor $\hom[\stkd](\shp,\dummy)$ is faithful and exact.} locally of finite type and $\stkc^\op\equi \stkEndo[\stkMod(\stkd)](\shp)$.
\end{itemize}
By reversing the role of $\stkc^\op$ and $\stkd$, one gets dual equivalent conditions.

\begin{theorem}[Morita]\label{th:morita}
An $\shr$-functor $\functh\colon \stkMod(\stkc) \to \stkMod(\stkd)$ is an
equivalence if and only if 
$\shp = \functh(\stkc)$ is an invertible $(\stkc^\op\tens_\shr\stkd)$-module. Moreover, one has
$\functh \simeq \shp\tens_\stkc(\dummy)$.
\end{theorem}

\begin{definition} 
Two $\shr$-stacks $\stkc$ and $\stkd$ are Morita $\shr$-equivalent if their
stacks of modules $\stkMod(\stkc)$ and $\stkMod(\stkd)$ are $\shr$-equivalent.
\end{definition}
Hence $\stkc$ and $\stkd$ are Morita $\shr$-equivalent if and only if there exists an invertible 
$(\stkc^\op\tens_\shr\stkd)$-module.

Let us say that $\shp\in\stkMod(\stkc^\op\tens[\shr]\stkd)\equi \stkFun[\shr](\stkd,\stkMod(\stkc^\op))$ 
is locally free of rank one over $\stkc^\op$ if for any $\objd\in\stkd$ the $\stkc^\op$-module
$\shp(\objd)$ is locally free of rank one, that is to say, the functor $\shp(\objd) \colon
\stkc^\op \to\stkMod(\shr)$ is representable.

Recall from~\eqref{eq:2mod} that 
${}_\functf(\dummy)\colon \stkMod(\stkc)\to\stkMod(\stkd)$ denotes
the functor associated to an $\shr$-functor $\functf\colon \stkd\to\stkc$.

\begin{proposition}\label{pro:AutInv}
The $\shr$-functor
\begin{equation}\label{eq:FctMod}
\stkFun[\shr](\stkd,\stkc) \to 
\stkMod(\stkc^\op\tens[\shr]\stkd),
\quad \functf\mapsto {}_\functf\bimod\stkc
\end{equation}
is fully faithful and induces an equivalence with the full substack of
locally free modules of rank one over $\stkc^\op$.
\end{proposition}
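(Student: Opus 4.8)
The plan is to reduce the statement to the Yoneda-type computations already established, exploiting the description~\eqref{eq:2mod}–\eqref{eq:2modalg} of the functor ${}_\functf(\dummy)$ and the ``picking up operators'' formalism from Notation~\ref{not:tens}. First I would make the target of the functor~\eqref{eq:FctMod} explicit: given $\functf\colon\stkd\to\stkc$, the $\stkc^\op\tens[\shr]\stkd$-module ${}_\functf\bimod\stkc$ is the functor $\stkc^\op\tens[\shr]\stkd\to\stkMod(\shr)$ sending $(\objc,\objd)$ to $\hom[\stkc](\objc,\functf(\objd))$; equivalently, viewing it as a functor $\stkd\to\stkMod(\stkc)$ it is the composition $\yoneda\stkc\circ\functf^\op\colon\stkd\to\stkc^\op\to\stkMod(\stkc)$, i.e.\ $\objd\mapsto\hom[\stkc](\dummy,\functf(\objd))=\yoneda\stkc(\functf(\objd))$. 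This is the key reformulation, and from it both assertions follow from properties of $\yoneda\stkc$.

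For full faithfulness, I would compute the effect of~\eqref{eq:FctMod} on hom-sheaves. For $\shr$-functors $\functf,\functf'\colon\stkd\to\stkc$, a morphism ${}_\functf\bimod\stkc\to{}_{\functf'}\bimod\stkc$ in $\stkMod(\stkc^\op\tens[\shr]\stkd)$ is, under the equivalence $\stkFun[\shr](\stkc^\op\tens[\shr]\stkd,\stkMod(\shr))\equi\stkFun[\shr](\stkd,\stkMod(\stkc))$ of Lemma~\ref{lem:stktensadj}, a transformation $\yoneda\stkc\circ\functf^\op\Rightarrow\yoneda\stkc\circ\functf'^\op$ of functors $\stkd\to\stkMod(\stkc)$. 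Since $\yoneda\stkc\colon\stkc^\op\to\stkMod(\stkc)$ is fully faithful by~\eqref{eq:stkyoneda}, precomposition with it is an isomorphism on the relevant hom-sheaves, so such transformations correspond bijectively (and $\shr$-linearly, compatibly with restriction, hence as sheaves) to transformations $\functf^\op\Rightarrow\functf'^\op$, that is to transformations $\functf'\Rightarrow\functf$ — and one checks the variance matches the convention in~\eqref{eq:2mod}. This gives the isomorphism $\shHom[\stkFun[\shr](\stkd,\stkc)](\functf,\functf')\isoto\shHom[\stkMod(\stkc^\op\tens[\shr]\stkd)]({}_\functf\bimod\stkc,{}_{\functf'}\bimod\stkc)$ on objects of the stack, compatibly with composition, which is exactly full faithfulness of~\eqref{eq:FctMod}.

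For the essential image, I would argue that a module $\shp\in\stkMod(\stkc^\op\tens[\shr]\stkd)$, seen as a functor $\shp\colon\stkd\to\stkMod(\stkc)$, is of the form ${}_\functf\bimod\stkc$ if and only if $\shp(\objd)$ lands in the essential image of $\yoneda\stkc$ for every $\objd\in\stkd$, i.e.\ if and only if $\shp$ is locally free of rank one over $\stkc^\op$ in the sense defined just before the proposition. One direction is immediate from the reformulation above. Conversely, if each $\shp(\objd)\simeq\yoneda\stkc(\functf(\objd))$ is representable, then since $\yoneda\stkc$ is fully faithful the assignment $\objd\mapsto\functf(\objd)$ extends uniquely to an $\shr$-functor $\functf\colon\stkd\to\stkc^\op$, wait—to be careful, to a functor $\stkd\to\stkc$ after taking opposites appropriately; the functoriality in $\objd$ of $\shp$ transports through the equivalence onto the structure morphisms of $\functf$, and one gets $\shp\equi{}_\functf\bimod\stkc$. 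The main obstacle I anticipate is bookkeeping: keeping the opposite-category and variance conventions consistent between $\bimod\stkc=\hom[\stkc](\cdot,\cdot)\in\stkMod(\stkc^\op\tens[\shr]\stkc)$, the functor ${}_\functf(\dummy)$ associated to a functor in the \emph{opposite} direction, and the ``picking up operators'' identifications — so that the constructed $\functf$ genuinely goes $\stkd\to\stkc$ and the claimed equivalence with the substack of $\stkc^\op$-locally-free rank-one modules is stated with the correct handedness. None of the steps require new ideas beyond Yoneda (Lemma~\ref{lem:Yoneda}) and the adjunction of Lemma~\ref{lem:stktensadj}; the content is organizing these identifications correctly.
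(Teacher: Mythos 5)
Your proposal is correct and follows essentially the same route as the paper: the paper's proof simply observes that \eqref{eq:FctMod} is post-composition with the Yoneda embedding $\yoneda{\stkc^\op}$, whence full faithfulness, and that the essential image consists of those $\shp$ for which each $\shp(\objd)$ is representable, i.e.\ locally free of rank one over $\stkc^\op$. The one slip is in your variance bookkeeping: the correct identification is ${}_\functf\bimod\stkc \simeq \yoneda{\stkc^\op}\circ\functf$ with $\yoneda{\stkc^\op}\colon\stkc\to\stkMod(\stkc^\op)$ covariant (not $\yoneda\stkc\circ\functf^\op$), so morphisms ${}_\functf\bimod\stkc\to{}_{\functf'}\bimod\stkc$ correspond to transformations $\functf\Rightarrow\functf'$ rather than $\functf'\Rightarrow\functf$, consistently with the covariance of \eqref{eq:FctMod} prescribed by \eqref{eq:2mod}.
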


\begin{proof}
(i) The functor in the statement equals $\yoneda {\stkc^\op}\circ \cdot$.
This is fully faithful, since $\yoneda {\stkc^\op}$ is fully faithful.

(ii) Assume that $\shp\in\stkMod(\stkc^\op\tens[\shr]\stkd)$ is a locally
free module of rank one over $\stkc^\op$. Then $\shp \simeq
{}_\functf\bimod\stkc$, where $\functf\colon \stkd \to\stkc$ is the
functor associating to $\objd\in\stkd$ the representative of $\shp(\objd)$.
\end{proof}

\begin{corollary}\label{cor:AutInv}
Two $\shr$-stacks $\stkc$ and $\stkd$ are $\shr$-equivalent if and
only if there exists $\shp \in \stkMod(\stkc^\op\tens[\shr]\stkd)$
which is invertible and locally free of rank one over $\stkc^\op$.
\end{corollary}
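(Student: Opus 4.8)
The plan is to deduce Corollary~\ref{cor:AutInv} by combining Corollary~\ref{cor:morita} (Morita) with Proposition~\ref{pro:AutInv}. First I would record what it means for $\stkc$ and $\stkd$ to be $\shr$-equivalent: there is an $\shr$-equivalence $\functf \colon \stkd \to \stkc$. Applying $\stkMod(\dummy)$ (contravariantly) gives an $\shr$-equivalence $\stkMod(\functf) = {}_\functf(\dummy) \colon \stkMod(\stkc) \to \stkMod(\stkd)$. By Corollary~\ref{cor:morita}, the module $\shp := \stkMod(\functf)(\stkc) = {}_\functf\bimod\stkc$ is invertible in $\stkMod(\stkc^\op\tens_\shr\stkd)$, and by Proposition~\ref{pro:AutInv} applied to this very $\functf$ it lies in the essential image of \eqref{eq:FctMod}, hence is locally free of rank one over $\stkc^\op$. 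This gives the ``only if'' direction.

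For the converse, suppose $\shp \in \stkMod(\stkc^\op\tens_\shr\stkd)$ is invertible and locally free of rank one over $\stkc^\op$. By Proposition~\ref{pro:AutInv}~(ii), $\shp \simeq {}_\functf\bimod\stkc$ for the $\shr$-functor $\functf \colon \stkd \to \stkc$ sending $\objd$ to the representative of $\shp(\objd)$; equivalently, ${}_\functf(\dummy) = \shp\tens_\stkc(\dummy)$ as functors $\stkMod(\stkc) \to \stkMod(\stkd)$. Since $\shp$ is invertible, condition (ii) in the list following Definition~\ref{def:inv} says that $\shp\tens_\stkc(\dummy)$ is an $\shr$-equivalence, so ${}_\functf(\dummy)$ is an $\shr$-equivalence of stacks of modules. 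It then remains to conclude that $\functf$ itself is an $\shr$-equivalence $\stkd \to \stkc$; this follows from the full faithfulness of the Yoneda embedding \eqref{eq:stkyoneda} together with the faithfulness part of Lemma~\ref{le:+locfull} (or, more directly, from Proposition~\ref{pro:AutInv}, whose functor \eqref{eq:FctMod} is fully faithful: an invertible $\shp$ corresponds to an invertible, hence equivalence-inducing, $\functf$, and one checks $\functf$ is an equivalence by noting its quasi-inverse is supplied symmetrically by the inverse bimodule of $\shp$).

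The main obstacle I anticipate is the last reduction: passing from ``${}_\functf(\dummy)$ is an equivalence of module stacks'' back to ``$\functf$ is an equivalence'', i.e.~a Morita-rigidity statement at the level of the underlying stacks rather than just their module stacks. One clean way is to exhibit the quasi-inverse explicitly: if $\shq$ is an inverse of $\shp$, then $\shq$ is also locally free of rank one over $\stkd^\op$ (this uses the symmetric form of conditions (iv)--(v) and the fact that $\shq \simeq \hom[\stkc^\op](\shp, \bimod\stkc)$), so $\shq \simeq {}_\functg\bimod\stkd$ for some $\shr$-functor $\functg \colon \stkc \to \stkd$, and the identities $\shq\tens_\stkd\shp \simeq \bimod\stkc$, $\shp\tens_\stkc\shq \simeq \bimod\stkd$ translate, via the full faithfulness of \eqref{eq:FctMod} and Theorem~\ref{th:premorita}~(ii), into $\functf\circ\functg \simeq \id_\stkc$ and $\functg\circ\functf \simeq \id_\stkd$. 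That exhibits $\functf$ as an $\shr$-equivalence and closes the argument.
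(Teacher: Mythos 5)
The ``only if'' half is correct and is surely the intended argument: combine the $2$-functoriality of $\stkMod(\dummy)$, Corollary~\ref{cor:morita} and Proposition~\ref{pro:AutInv}. The gap is in the converse, exactly at the step you single out as the main obstacle. Your key claim --- that the inverse $\shq$ of $\shp$ is automatically locally free of rank one over $\stkd^\op$ --- does not follow from what you cite: the dual forms of conditions (iv)--(v) only say that $\shq$ is faithfully flat of finite presentation (equivalently, a progenerator) over $\stkd^\op$, which is strictly weaker than representability of each $\shq(\objc)$. For arbitrary $\shr$-stacks the claim, and the corollary read literally, actually fail. Take $X$ a point, $\sha$ a ring with $\sha\not\simeq\sha^{2}$ as modules, $\stkc$ the full subcategory of $\stkMod(\sha)$ on the two objects $\sha$ and $\sha^{2}$, $\stkd\subset\stkc$ the full subcategory on $\sha^{2}$, and $\shp={}_\iota\bimod\stkc$ for $\iota$ the inclusion. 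Then $\shp$ is locally free of rank one over $\stkc^\op$ (each $\shp(\objd)$ is represented by $\objd$ itself) and invertible, since $\shp\tens[\stkc](\dummy)$ is the restriction functor $\stkMod(\stkc)\to\stkMod(\stkd)$, an equivalence by classical Morita theory; yet $\stkc$ and $\stkd$ are not $\shr$-equivalent, and correspondingly $\shq(\sha)$ is not a representable $\stkd^\op$-module. In other words, condition (iv) does give full faithfulness of $\functf$ (via $\stkd\equi\stkEndo[\stkc^\op](\shp)$), but nothing in your argument forces essential surjectivity. The parenthetical ``more direct'' route is circular: full faithfulness of \eqref{eq:FctMod} matches isomorphisms of bimodules with isomorphisms of transformations and says nothing about $\functf$ being an equivalence.

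The statement must therefore be read --- as the paper does in the sentence following it --- for algebroids, or at least for stacks locally connected by isomorphisms, and that hypothesis has to enter your proof somewhere. The missing ingredient is Remark~\ref{rem:LocFreeBim}: locally $\stkc\equi\astk\sha$ and $\stkd\equi\astk\shb$, the module $\shp$ corresponds to an $\sha^\op\tens[\shr]\shb$-module free of rank one over $\sha^\op$, and the algebra morphism $f\colon\shb\to\sha$ of \eqref{eq:LocFreeBim} is an isomorphism precisely because $\shp$ is invertible. Hence $\functf\simeq\astk f$ is locally, and therefore globally, an equivalence. (Once $f$ is known to be invertible, $\shq$ is locally ${}_{f^{-1}}\sha$, so your rank-one claim does hold in this setting and your concluding composition argument via Theorem~\ref{th:premorita}~(ii) goes through.)
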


In particular, two algebroids $\stka$ and $\stkb$ are $\shr$-equivalent if and only if there exists an
invertible $(\stka^\op\tens[\shr]\stkb)$-module $\shp$ which is locally free of
rank one over $\stka^\op$. These conditions on $\shp$ are equivalent
to the condition that $\shp$ is bi-invertible in the sense of
\cite[Corollary  2.1.10]{KS12}.

\begin{remark}\label{rem:LocFreeBim}
If $\stkc\equi\astk\sha$ and $\stkd\equi\astk\shb$, the functor  $\astk\shb \to\astk\sha$ 
associated to an $\sha^\op\tens[\shr]\shb$-module $\shp$ locally free of rank one 
over $\sha^\op$ is $\functf = (\cdot)\tens[\shb]\shp$. 
Note that any local isomorphism $h\colon \sha \isoto \shp$ of right $\sha$-modules defines
a local $\shr$-algebra morphism (isomorphism if $\shp$ invertible)
\begin{equation}\label{eq:LocFreeBim}
f\colon \shb\to \shEnd[\sha^\op](\shp) \to[\ad(\opb h)] \shEnd[\sha^\op](\sha)\simeq \sha,
\end{equation}
(the first arrow is induced by the $\shb$-module structure of $\shp$), for which
$h\colon {}_f\sha \isoto \shp$ is an isomorphism of $\sha^\op\tens[\shr]\shb$-modules and 
$\functf \simeq \astk f$. If $h$ is given by $a \mapsto ua$ for a local generator $u$ of the right $\sha$-modules $\shp$, then $f(b) = a$ for $a$ such that $ua=bu$.
\end{remark}

\subsection{Picard good stacks}

We will use the notation
\[
\stkc^e = \stkc^\op\tens_\shr\stkc.
\]

\begin{definition}
\label{def:Picgood}
An $\shr$-stack $\stkc$ is Picard good if all invertible $\stkc^e$-modules are locally free of rank one over $\stkc^\op$ (or, equivalently, over $\stkc$). An $\shr$-algebra $\sha$ is Picard good if it is so as an $\shr$-stack.
\end{definition}

For $\sha= \shr$, one recovers Definition 4.2 given in~\cite{DP04}.

Since the condition of being Picard good is local, an algebroid is Picard good if and only if so are the algebras that locally represent it.

Recall from (v) in Section~\ref{sse:Morita} that invertible bimodules are projective as right (or left) modules. It follows that examples of Picard good rings are  projective-free rings, and in particular local rings. Note however that Picard good does not imply projective-free (see Remark~\ref{rem:tf}).

Denote by $\stkInv(\stkc^e)$ the substack of $\stkMod(\stkc^e)$ whose objects
are invertible $\stkc^e$-modules and whose morphisms are only those morphisms which
are invertible. Then $\tens[\stkc]$ induces on $\stkInv(\stkc^e)$ a natural structure of stack 
of $2$-groups, and~\eqref{eq:FctMod} gives a fully faithful functor of
stacks of $2$-groups
\begin{equation}\label{eq:AutInv}
\stkAut[\shr](\stkc)_\op \hookrightarrow \stkInv(\stkc^e),
\quad \functf\mapsto {}_\functf\stkc.
\end{equation}
Here, for $\stkg$ a stack of $2$-groups, $\stkg_\op$ denotes the stack of
$2$-groups with the same groupoid structure as $\stkg$ and with reversed monoidal structure.

Set 
$$\out[\shr](\stkc) = \pi_0(\stkAut[\shr](\stkc)), \qquad \pic[\shr](\stkc) = \pi_0(\stkInv(\stkc^e)).$$
Then~\eqref{eq:AutInv} induces an injective homomorphism of groups
\begin{equation}\label{eq:InnPic}
\out[\shr](\stkc)^\op \hookrightarrow \pic[\shr](\stkc).
\end{equation}
Note that, from~\eqref{eq:aut} follows that $\out(\astk\sha)=\aut[\shr](\sha)/\inn(\sha)$, the sheaf of outer automorphisms of $\sha$.

\begin{proposition}\label{pro:Out}
Let $\stkc$ be an $\shr$-stack. Then the following are equivalent
\begin{itemize}
\item[(i)] $\stkc$ is Picard good;
\item[(ii)] \eqref{eq:AutInv} is an equivalence;
\item[(iii)] \eqref{eq:InnPic} is an isomorphism.
\end{itemize}
\end{proposition}

\begin{proof}
The equivalence between (i) and (ii) follows from Proposition~\ref{pro:AutInv}, whereas that 
between (ii) and (iii) follows from the fact that a functor of stacks $\stkc \to \stkc'$ is essentially 
surjective if and only if the induced morphism of sheaves $\pi_0(\stkc) \to \pi_0(\stkc')$ is surjective.
\end{proof}

\begin{proposition}\label{pro:picgood}
Let $\stkc$ be a Picard good $\shr$-stack.
\begin{itemize}
\item[(i)] 
Let $\stkd$ be an $\shr$-stack locally equivalent to  
$\stkc$. Then $\stkc$ and
$\stkd$ are Morita $\shr$-equivalent if and only if they are
$\shr$-equivalent.
\item[(ii)] 
Let $\stkm$ be an $\shr$-stack locally $\shr$-equivalent to $\stkMod(\stkc).$
Then $\stkm\equi\stkMod(\stkd)$ for an  $\shr$-stack  $\stkd$ locally
$\shr$-equivalent to $\stkc$.
\end{itemize}
\end{proposition}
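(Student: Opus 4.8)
The plan is to use the machinery of non-commutative cohomology (Section~\ref{se:nonab}) together with the key observation encapsulated in the commutative diagram~\eqref{eq:m}. Since $\stkc$ is Picard good, the map $m$ there is an equivalence of stacks of $2$-groups, hence so is the composite
\[
\stkAut[\shr](\stkc) \equi \stkAut[\shr](\stkc)_\op \isoto \stkAut[\shr](\stkMod(\stkc)),
\]
(the first equivalence sends a functor to its quasi-inverse, reversing the monoidal structure). Passing to first cohomology and invoking Corollary~\ref{cor:h1aut} twice, we obtain a bijection between equivalence classes of $\shr$-stacks locally equivalent to $\stkc$ and equivalence classes of $\shr$-stacks locally equivalent to $\stkMod(\stkc)$. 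For part~(ii), an $\shr$-stack $\stkm$ locally equivalent to $\stkMod(\stkc)$ thus corresponds to an $\shr$-stack $\stkd$ locally equivalent to $\stkc$; I would then show that the equivalence of cohomology sets is realized geometrically by $\stkd \mapsto \stkMod(\stkd)$, so that $\stkm \equi \stkMod(\stkd)$.

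First I would make the cocycle bookkeeping explicit. Start with $\stkm$ locally $\shr$-equivalent to $\stkMod(\stkc)$: choose an open cover $\shu = \{U_i\}$ and equivalences $\stkm|_{U_i} \equi \stkMod(\stkc)|_{U_i} = \stkMod(\stkc|_{U_i})$; the transition data is a $1$-cocycle $\pair{\functf_{ij}}{\transfa_{ijk}}{}$ with values in $\stkAut[\shr](\stkMod(\stkc))$. Using that $m$ is an equivalence, lift each $\functf_{ij}$ (up to the $2$-isomorphisms $\transfa_{ijk}$) through $m$ to obtain a $1$-cocycle with values in $\stkAut[\shr](\stkc)_\op$, i.e.\ data $\pair{\functg_{ij}}{\transfb_{ijk}}{}$ with $\functg_{ij} \in \stkAut[\shr](\stkc)(U_{ij})$. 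This cocycle patches the stacks $\stkc|_{U_i}$ into an $\shr$-stack $\stkd$ locally equivalent to $\stkc$ (Corollary~\ref{cor:h1aut}). Then I would check that applying $\stkMod(\dummy)$ to this patching datum recovers, up to equivalence, the original patching of $\stkm$: this is precisely the content of the (quasi-)commutativity of~\eqref{eq:m}, since $m$ is by construction induced by $\stkMod(\dummy)$, so $m(\functg_{ij}) \simeq \functf_{ij}$ compatibly with the $2$-cells. Hence $\stkMod(\stkd)$ and $\stkm$ have the same descent data relative to $\shu$, and by the descent property of stacks they are globally $\shr$-equivalent.

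For part~(i) the argument is cleaner: if $\stkd$ is locally equivalent to $\stkc$, then $\stkMod(\stkd)$ is locally equivalent to $\stkMod(\stkc)$, and by the equivalence $m$ (and Corollary~\ref{cor:morita}), the class of $\stkMod(\stkd)$ in $H^1(X; \stkAut[\shr](\stkMod(\stkc)))$ corresponds bijectively to the class of $\stkd$ in $H^1(X; \stkAut[\shr](\stkc))$. Therefore $\stkMod(\stkc) \equi \stkMod(\stkd)$ (i.e.\ $\stkc$ and $\stkd$ Morita $\shr$-equivalent) if and only if these classes are trivial, i.e.\ if and only if $\stkc \equi \stkd$. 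Concretely: a Morita equivalence is an invertible $(\stkc^\op \tens_\shr \stkd)$-module $\shp$; Picard goodness forces (via~\eqref{eq:AutInv} being an equivalence and Proposition~\ref{pro:AutInv}) that $\shp$ is locally free of rank one over $\stkc^\op$, and then Corollary~\ref{cor:AutInv} gives the $\shr$-equivalence $\stkc \equi \stkd$. The converse is trivial.

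The main obstacle I anticipate is the coherence/functoriality check in part~(ii): lifting a $1$-cocycle through the equivalence $m$ is painless at the level of objects and morphisms, but one must verify that the chosen lifts $\functg_{ij}$ can be upgraded with $2$-cells $\transfb_{ijk}$ satisfying the cocycle condition~\eqref{eq:alpha}, and that the resulting $\stkMod(\stkd)$ matches $\stkm$ \emph{as stacks with descent data} — not merely that they have cohomologous cocycles. This is where one genuinely uses that $m$ is an equivalence of stacks of $2$-groups (not just a bijection on $\pi_0$ and $\pi_1$ separately) and that $\stkMod(\dummy)$ is a $2$-functor inducing $m$; the bookkeeping is straightforward in principle but needs care to state precisely, and is most efficiently phrased by saying that $m$ induces an isomorphism $H^1(X;\stkAut[\shr](\stkc)) \isoto H^1(X;\stkAut[\shr](\stkMod(\stkc)))$ of pointed sets compatible with the geometric realizations of Corollary~\ref{cor:h1aut}.
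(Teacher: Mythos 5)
Your argument is correct, but it takes a genuinely different route from the paper, most visibly in part (ii). The paper avoids cocycles entirely: for (ii) it defines $\stke\subset\stkm$ as the full substack of objects $\shp$ such that $\functh(\shp)$ is locally free of rank one for every local $\shr$-equivalence $\functh\colon\stkm\equito\stkMod(\stkc)$ --- Picard goodness is exactly what makes this condition independent of the choice of $\functh$, since any two such local equivalences differ by an invertible bimodule, which by hypothesis preserves the locally-free-rank-one objects --- and then sets $\stkd=\stke^\op$ and checks that $\shn\mapsto\hom[\stkm](\cdot,\shn)$ is locally, hence globally, an equivalence $\stkm\equi\stkMod(\stkd)$. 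For (i) the paper considers the functor $\stkEquiv[\shr](\stkc,\stkd)\to\stkEquiv[\shr](\stkMod(\stkd),\stkMod(\stkc))$ induced by $\stkMod(\dummy)$, observes that it is locally the equivalence $m$ of \eqref{eq:m}, hence globally an equivalence, and in particular a bijection on global objects. Your cohomological version is equivalent in content (an equivalence of stacks of $2$-groups induces a bijection on $H^1$ compatible with the geometric realizations of Corollary~\ref{cor:h1aut}), and your ``concrete'' bimodule argument for (i) --- an invertible $(\stkc^\op\tens_\shr\stkd)$-module is locally an invertible $\stkc^e$-module, hence locally free of rank one over $\stkc^\op$ by Picard goodness, hence comes from an equivalence by Corollary~\ref{cor:AutInv} --- is perfectly sound and arguably the most direct proof of (i). What the paper's intrinsic construction buys in (ii) is precisely the avoidance of the coherence bookkeeping you flag as the main obstacle: there is no cocycle to lift, no $2$-cells $\transfb_{ijk}$ to choose, and no need to match descent data afterwards, because $\stkd$ and the equivalence $\stkm\equi\stkMod(\stkd)$ are defined globally from the start and only \emph{verified} locally. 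Your gluing argument does work (lifts of the $\transfa_{ijk}$ exist and are unique by full faithfulness of $m$, and the cocycle identity upstairs follows from faithfulness), but you would also have to track the index reversal coming from the contravariance of $\stkMod(\dummy)$ (cf.\ \S\ref{se:mcocy}) against the $\op$ in $\stkAut[\shr](\stkc)_\op$, which the intrinsic construction sidesteps.
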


\begin{proof}
(i) By Theorem~\ref{th:morita}, there is an
equivalence of stacks of $2$-groups
$$
\stkInv(\stkc^e) \equito \stkAut[\shr](\stkMod(\stkc)), \quad
\shp\mapsto \shp\tens[\stkc](\cdot).
$$
We thus have a (quasi-)commutative diagram 
\[
\xymatrix@C=0em@R=2em{
\stkInv(\stkc^e) \ar[rr]^-{\equi[]} &&
\stkAut[\shr](\stkMod(\stkc)) \\
& \stkAut[\shr](\stkc)_\op ,
\ar@{_(->}[ul] \ar@{^(->}_{\stack m}[ur],}
\]
where $\stack m$ is induced by the functor $\stkMod(\dummy)$. 
It follows from Proposition~\ref{pro:Out}
that $\stkc$ is Picard good if and only if $\stack m$ is an equivalence.

Let $\stkEquiv[\shr](\cdot,\cdot)$ denote the stack of $\shr$-equivalences, with invertible transformations as morphisms. Consider the functor
$$
\stkEquiv[\shr](\stkc,\stkd) \to 
\stkEquiv[\shr](\stkMod(\stkd),\stkMod(\stkc))
$$
induced by the 2-functor $\stkMod(\dummy)$. Since $\stkd$ is locally equivalent to $\stkc$, 
this locally reduces to the functor $\stack m$ above. It is thus locally, hence globally, an equivalence.

\smallskip\noindent (ii)
Let $\stke \subset \stkm$ be the full substack of objects $\gamma$ with the property that for any local $\shr$-equivalence $\functh\colon \stkm \equito \stkMod(\stkc)$, the $\stkc$-module 
$\functh(\gamma)$ is locally free of rank one.
Since $\stkc$ is Picard good, the $\shr$-stack $\stke$ is locally non-empty and locally $\shr$-equivalent to $\stkc^\op$.
Set $\stkd=\stke^\op$. Then the $\shr$-functor
$$\stkm \to \stkMod(\stkd), \quad \delta \mapsto \hom[\stkm](\cdot,\delta)$$
is locally, hence globally, an equivalence.
\end{proof}

If $\stkc$ is an invertible $\shr$-algebroid, then it is Picard good if and only if $\shr$ is, and one has equivalences of stacks of 2-groups
\begin{equation}\label{eq:TorsInv}
\shr^\times[1] \equito \stkInv(\shr)\equi[] \stkInv(\stkc^e),
\quad \shp \mapsto \shr \times_{\shr^\times} \shp.
\end{equation}
(Recall that $\shr^\times[1]$ denotes the stack of $\shr^\times$-torsors.)
Moreover, in this situation the stack $\stkd$ in $(ii)$ above is $\shr$-equivalent via 
$\gamma \mapsto \hom[\stkm](\gamma,\cdot)$  to the full substack of $\stkFun[\shr](\stkm,\stkMod(\shr))$ whose objects are equivalences.

Examples of stacks as in Proposition \ref{pro:picgood} (ii) arise from deformations of categories of modules as discussed in~\cite{LVdB06}. 
In particular, Proposition \ref{pro:picgood} applies when $\stkc$ is (equivalent to) the structure sheaf of a ringed space. We thus recover results of \cite{Low08}.

\section{Microdifferential operators}\label{se:mic}

We collect here some results from the theory of microdifferential operators of
\cite{S-K-K} (see also \cite{Sch85,Kas86,Kas03}). The statements about the automorphisms of the sheaf of microdifferential operators are well known. Since we lack a reference for the proofs, we give them here.

\subsection{Microdifferential operators}

Let $M$ be an $n$-dimensional complex manifold, $T^*M$ its cotangent bundle 
and $\dTM \subset T^*M$ the open subset obtained by removing the zero-section.

Denote by $\she_{\dTM}$ the sheaf of microdifferential operators on
$\dTM$. Recall that $\she_{\dTM}$ is a sheaf of central $\C$-algebras
endowed with a $\Z$-filtration by the order of the operators, and one has
$$\gr\she_{\dTM}\simeq \DSum_{m\in\Z}\sho_{\dTM}(m),$$ 
where $\sho_{\dTM}(m)$ is the subsheaf of $\sho_{\dTM}$ of holomorphic functions 
homogeneous of degree $m$.

For $\lambda\in\C$, denote by $\she_\dTM(\lambda)$ the sheaf of microdifferential 
operators of order at most $\lambda$. In a local coordinate system $(x)$ on $M$, with associated symplectic
coordinates $(x;\xi)$ on $\dTM$, a section $P\in\sect(V;\she_\dTM(\lambda))$
is determined by its total symbol, which is a formal series
\[
\tot(P)=\sum_{j=0}^{+\infty} p_{\lambda - j}(x,\xi)
\]
with $p_{\lambda - j}\in\sect(V;\sho_\dTM)$ homogeneous of degree ${\lambda - j}$,
satisfying suitable growth conditions in $j$. If $Q$
is a section of $\she_\dTM(\mu)$, then $PQ\in \she_\dTM(\lambda + \mu)$ has
total symbol given by the Leibniz formula
\[
\tot(P Q)=
\sum_{\alpha\in \N^n} \frac{1}{\alpha !} 
\partial^{\alpha}_\xi\tot(P)
\partial^{\alpha}_x\tot(Q).
\]

Set
\[
\she_\dTM^{[\lambda]} = \Union_{n\in\Z}\she_\dTM(\lambda+n),
\]
where $[\lambda]$ is the class of $\lambda$ in $\C/\Z$, and denote by
\[
\sigma_\lambda\colon\she_\dTM(\lambda)\to\sho_\dTM(\lambda) \quad\text{and}\quad
\sigma\colon\she_\dTM^{[\lambda]}\to\sho_\dTM
\]
the symbol of order $\lambda$ and the principal symbol, respectively, where
$\sigma(P)=\sigma_\mu(P)$ for
$P\in\she_\dTM(\mu)\setminus\she_\dTM(\mu-1)$.
Note that $\she_\dTM^{[\lambda]}$ is a bimodule over $\she_\dTM = \she_\dTM^{[0]}$ and 
for any $P\in \she_\dTM(\lambda)$ and $Q\in \she_\dTM(\mu)$ one has
$$
\sigma_{\lambda +\mu}(PQ)=\sigma_\lambda(P)\sigma_\mu(Q).
$$
Recall that a microdifferential operator is invertible at $p\in\dTM$ if and only if its
principal symbol does not vanish at $p$.

\subsection{Automorphisms of $\she_\dTM$}

\begin{lemma}\label{lem:ffilt}
Any $\C$-algebra automorphism of $\she_\dTM$ is filtered and symbol
preserving.
\end{lemma}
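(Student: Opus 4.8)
The plan is to prove the statement in two stages: first that any $\C$-algebra automorphism $\Phi$ of $\she_\dTM$ preserves the order filtration, and then that it induces the identity on the graded ring, hence is symbol preserving.

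For the filtration, the key point is to characterize $\she_\dTM(0)$ — equivalently the subsheaves $\she_\dTM(m)$ for $m\leq 0$ — in purely algebraic terms that $\Phi$ must respect. The natural invariant to use is invertibility: recall that a section of $\she_\dTM$ is invertible at $p$ if and only if its principal symbol is nonzero at $p$, and in particular a section of order $\leq 0$ whose symbol of order $0$ is a nonvanishing function is a unit. First I would observe that the subsheaf of locally invertible sections, and more refined data such as the Jacobson radical of the stalks $\she_{\dTM,p}$, are preserved by $\Phi$; the stalk $\she_{\dTM,p}$ is local (its non-units being exactly the operators of negative order, or of order $0$ with vanishing symbol at $p$), so $\Phi$ preserves the maximal ideal, giving $\Phi(\she_\dTM(-1))\subset$ that ideal locally. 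To pin down $\she_\dTM(0)$ itself one can argue that $\she_\dTM(0)$ is the subring generated over $\C$ by $\she_\dTM(-1)$ together with all local units — or, more robustly, use that $\she_\dTM(m)$ for $m\geq 1$ is characterized by a growth/divisibility property relative to a fixed invertible section of order $1$ (such a section exists locally, e.g. a vector field of the form $\sum \xi_i\partial_{x_i}$ in suitable coordinates). Since $\Phi$ sends invertible sections to invertible sections and respects the ring structure, it sends the filtration step generated this way to itself; a symmetric argument with $\Phi^{-1}$ gives the reverse inclusion, so $\Phi$ is filtered.

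Once $\Phi$ is filtered it induces a graded $\C$-algebra automorphism $\gr\Phi$ of $\gr\she_\dTM\simeq\bigoplus_{m\in\Z}\sho_\dTM(m)$. I would then show $\gr\Phi$ is the identity. On the degree-$0$ part $\sho_\dTM(0)=\C$ (locally, or more precisely the locally constant functions, since $\dTM$ fibers over $M$ with connected $\C^\times$-fibers removed appropriately) it is forced to be the identity because it is $\C$-linear. For degree $1$: elements of $\sho_\dTM(1)$ are functions linear on the fibers of $\dTM\to M$, i.e. correspond to vector fields on $M$; the Lie bracket on $\she_\dTM$ induces the Poisson bracket on $\gr$, and $\sho_\dTM(1)$ together with $\sho_\dTM(0)$ generates $\gr\she_\dTM$ as a Poisson algebra (this is where homogeneity is used: $\{\sho(1),\sho(m)\}\subset\sho(m)$ and the Euler vector field recovers degrees). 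So it suffices to see $\gr\Phi$ is the identity on $\sho_\dTM(1)$. This I would get by commutator estimates: for $P$ of order $1$, $\Phi(P)$ has order $1$ and $\sigma_1(\Phi(P))$ is a derivation-valued function; comparing $\Phi([P,Q])$ with $[\Phi(P),\Phi(Q)]$ and using that $\she_\dTM(1)$ modulo $\she_\dTM(0)$ is generated by brackets with the $\sho_\dTM(0)=\sho_M$-part forces $\gr_1\Phi=\id$. Then propagate to all degrees using the Poisson-generation statement, so $\gr\Phi=\id$, i.e. $\sigma_m(\Phi(P))=\sigma_m(P)$ for every homogeneous piece; this is exactly the assertion that $\Phi$ is symbol preserving.

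The main obstacle I anticipate is the first stage — giving a clean intrinsic characterization of the filtration step $\she_\dTM(0)$ (or $\she_\dTM(1)$) that is manifestly preserved by an arbitrary $\C$-algebra automorphism, without already assuming continuity or compatibility with the symbol. The cleanest route is probably the local-ring argument at stalks combined with the observation that an invertible section of order exactly $1$ can be singled out algebraically (e.g. as an element $E$ such that $\she_{\dTM,p}=\C\langle E^{-1}\rangle$-module-theoretically interpolates the filtration, or via $\mathrm{ad}(E)$ acting semisimply with integer eigenvalues on the associated graded); then $\Phi(\she_\dTM(m))$ is caught between $\she_\dTM(m-c)$ and $\she_\dTM(m+c)$ for a uniform $c$, and a bootstrapping with $\Phi^{-1}$ forces $c=0$. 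Everything after that — the passage to $\gr$ and the Poisson-bracket generation argument — is essentially formal given the structure of $\gr\she_\dTM$ recalled above.
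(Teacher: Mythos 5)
Your overall architecture (first show the automorphism is filtered, then pass to the graded ring and use Poisson brackets) matches the paper's, but the step that carries all the weight --- an intrinsic, automorphism-invariant characterization of the filtration --- is exactly the step you leave unresolved, and the specific mechanisms you propose for it do not work. The stalk $\she_{\dTM,p}$ is \emph{not} a local ring: units exist in every order (any operator whose principal symbol, of whatever degree, is nonvanishing at $p$ is invertible there), so the non-units at $p$ are the operators with vanishing principal symbol, and these do not form an ideal --- if $P$ has order $1$ with $\sigma_1(P)(p)=0$ and $P\neq 0$, then $P$ and $1-P$ are both non-units at $p$ while their sum is $1$. This also breaks your claim that $\she_\dTM(0)$ is generated by $\she_\dTM(-1)$ and the local units (units of order $2$ are local units), and your ``divisibility relative to a fixed invertible section of order $1$'' is circular, since singling out the order-$1$ operators among all invertible operators is precisely the problem. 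A second error occurs in your stage two: $\sho_\dTM(0)$ is not $\C$ or the locally constant functions --- it is the sheaf of all holomorphic functions homogeneous of degree $0$, i.e.\ essentially the functions pulled back from $P^*M$ --- so $\gr_0\Phi=\id$ is not ``forced by $\C$-linearity'' and is in fact a substantive statement.

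The paper's resolution is the spectrum $\Sigma(P)(p)=\{a\in\C\colon a-P\text{ is not invertible at }p\}$, a purely ring-theoretic invariant. For $P$ of order $0$ with non-locally-constant principal symbol one has $\Sigma(P)(p)=\{\sigma_0(P)(p)\}$, which identifies both the order \emph{and} the symbol of such operators in one stroke (this is what makes $\gr_0\Phi=\id$ come out, after extending to constant-symbol operators by multiplicativity of $\sigma_0$); and for $P$ with $\sigma(P)(p)=0$ one has $\Sigma(P)(p)=\C$ exactly when $P$ has positive order, which is what rules out the order-reversing automorphism after a single invertible $D\in\she_\dTM(1)\setminus\she_\dTM(0)$ is used to transport $\she_\dTM(0)$ to every filtration step. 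Your degree-$1$ Poisson-bracket argument ($\partial_{\xi_i}\sigma_1(D)=\{x_i,\sigma_1(D)\}=\sigma_0([x_i,D])$, then homogeneity) is essentially the paper's step (iv) and is fine once the preceding steps are in place; but without a correct replacement for the filtration argument the proof does not go through.
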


\begin{proof}
Let $f$ be a $\C$-algebra automorphism of $\she_\dTM$. 
Define the spectrum of $P\in \sect(V;\she_\dTM)$ as
\begin{align*}
\Sigma(P)\colon V &\to \mathcal P(\C)\\
p &\mapsto \{a\in\C \colon a-P \text{ is not invertible at }p\},
\end{align*}
where $\mathcal P(\C)$ denotes the set of subsets of $\C$.
Note that $\Sigma(P) = \Sigma(f(P))$.
Set for short
\[
\she_m = \she_\dTM(m)\setminus\she_\dTM(m-1).
\]
Recall that $P$ is invertible if and only if its principal symbol does not vanish. 

\smallskip\noindent(i)
If $P\in\she_0$ and its principal symbol is not locally constant,
then $\Sigma(P)(p) = \{\sigma(P)(p)\}$.
Since $\Sigma(P) = \Sigma(f(P))$, it follows that $f(P)\in\she_0$
and $\sigma(P) = \sigma(f(P))$.

\smallskip\noindent(ii)
Let $P\in\she_0$ have locally constant principal symbol.
For any
$Q\in\she_\dTM(0)\setminus\opb{\sigma_0}(\C_\dTM)$ one has
\[
\begin{split}
\sigma_0(P)\sigma_0(Q) 
= \sigma_0(PQ)
= \sigma_0(f(PQ)) 
= \sigma(f(P))\sigma_0(f(Q)) 
= \sigma(f(P))\sigma_0(Q)
\end{split}
\]
where the second equality follows from (i).
One deduces $\sigma(f(P)) = \sigma_0(P)$, so that in particular $f(P)\in\she_0$.

\smallskip\noindent(iii)
Pick an operator $D\in \she_1$ invertible at $p$, and let $d$ be the order of
$f(D)$. Then $f(D)^m$ is an invertible operator of order $d m$ and one has
\[
f(\she_m)
= f (D^m \she_0 ) 
= f(D)^m f(\she_0) 
= f(D)^m \she_0 
= \she_{d m}.
\]
Since $f$ is an automorphism of $\she_\dTM \setminus \{0\} =
\DUnion\nolimits_{m\in\Z } \she_m$, it follows that $d=\pm 1$. Thus $f$ either
preserves or reverses the order. Note that if an operator $P$ satisfies
$\sigma(P)(p)=0$, then $\Sigma(P)(p)=\C$ if and only if $P$ has
positive order. Hence, $f$ preserves the order.

\smallskip\noindent(iv)
We have proved that $f$ is filtered and preserves the symbol of operators in
$\she_0$. As $\she_m = D^m \she_0$, to show that $f$ is symbol preserving it is enough
to check that
$\sigma_1(D) = \sigma_1 (f(D)).$

Let $(x;\xi)$ be a local system of symplectic coordinates at $p$. Identifying
$x_i$ with the operator in $\she_0$ whose total symbol is $x_i$, one has
\[
\begin{split}
\partial_{\xi_i}\sigma_1(D)
&= \{x_i, \sigma_1(D)\}
= \{\sigma_0(x_i), \sigma_1(D)\}
= \sigma_0([x_i,D])  \\
&= \sigma_0(f([x_i,D])) 
= \sigma_0([f(x_i),f(D)]) 
= \{ \sigma_0(f(x_i)), \sigma_1(f(D))\} \\
&= \{ x_i, \sigma_1(f(D))\} =
\partial_{\xi_i}\sigma_1f((D)),
\quad\text{for }i=1,\dots,n,
\end{split}
\]
so that
\[
\sigma_1(D) = \sigma_1 (f(D)) + \varphi(x),
\]
and one takes the homogeneous component of degree $1$. 
\end{proof}

\begin{proposition}\label{pro:fadp}
Any $\C$-algebra automorphism of $\she_\dTM$ is locally of the form $\ad(P)$
for some $\lambda\in \C$ and some invertible $P\in \she_\dTM(\lambda)$.
\end{proposition}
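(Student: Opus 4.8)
The plan is to combine Lemma~\ref{lem:ffilt} with a step-by-step construction of an inner automorphism that trivializes $f$ on a small open set. Since Lemma~\ref{lem:ffilt} already tells us that any $\C$-algebra automorphism $f$ of $\she_\dTM$ is filtered and symbol-preserving, we are reduced to the following: given such an $f$, working locally near a point $p$, find $\lambda\in\C$ and an invertible $P\in\she_\dTM(\lambda)$ so that $f = \ad(P)$ on a neighbourhood of $p$. The main idea is to quantize a classical symplectic fact: an automorphism of $\she$ that is the identity on symbols comes from a contact (in fact homogeneous symplectic) transformation that is close to the identity, and such a transformation can be realized as conjugation by an invertible microdifferential operator via the quantized contact transformation machinery of \cite{S-K-K}.

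Concretely, I would proceed as follows. First, choose local symplectic coordinates $(x;\xi)$ at $p$ and identify, as in the proof of Lemma~\ref{lem:ffilt}, the coordinate functions $x_i$ (and suitable $\xi_j$, e.g.\ via operators like $D\partial_{x_j}$ or $\partial_{x_j}$ where defined) with operators in $\she_\dTM$. Since $f$ is symbol-preserving, the operators $f(x_i)$ have the same principal symbols as $x_i$, and likewise for the $\xi_j$-operators; hence the $2n$ operators $f(x_i), f(\xi_j)$ have total symbols that agree to top order with $x_i,\xi_j$ and satisfy the canonical commutation relations. By the structure theory of quantized contact transformations in \cite{S-K-K} (the statement that a microlocal symplectic transformation fixing a point, together with a choice of such "quantized coordinate" operators, is induced by an isomorphism $\she\isoto\she$ unique up to inner conjugation), there exists an invertible $P\in\she_\dTM(\lambda)$ for a suitable $\lambda\in\C$ such that $\ad(P)(x_i) = f(x_i)$ and $\ad(P)(\xi_j)=f(\xi_j)$ for all $i,j$. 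Replacing $f$ by $\ad(P)^{-1}\circ f$, we may thus assume that $f$ fixes all the generators $x_i$ and $\xi_j$.

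It then remains to show that an algebra automorphism $g$ of $\she_\dTM$ (near $p$) that is the identity on the operators $x_i$ and $\xi_j$ is the identity. This is the heart of the argument: one shows that the centralizer in $\she_\dTM$ of the subalgebra generated by $x_1,\dots,x_n,\xi_1,\dots,\xi_n$ is reduced to $\C$ (equivalently, that these operators generate $\she$ "topologically" — every microdifferential operator is a limit, in the appropriate filtered/symbol sense, of polynomial expressions in them, with the Leibniz formula controlling the total symbol). Since $g$ is filtered, $\C$-linear, continuous for the symbol filtration, and fixes the generators, it must fix every section, hence $g=\id$. Unwinding, $f=\ad(P)$ near $p$, which is the assertion. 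One should also note the mild bookkeeping point that $P$ is only determined up to an invertible element of order $0$ with nowhere-vanishing constant principal symbol, which does not affect the statement.

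The main obstacle, and the step requiring genuine care rather than bookkeeping, is the realization of $f$ (on the generators) as an inner automorphism $\ad(P)$: this is exactly where the nontrivial input from \cite{S-K-K} — the existence of quantized contact transformations and their uniqueness up to inner automorphisms — enters, and one must check that the symbol-preserving hypothesis on $f$ indeed puts us in the range where the relevant contact transformation is the identity (so that the "quantization" is an inner automorphism rather than a genuine contact transformation), and that the order $\lambda$ is correctly accounted for. Everything else — the density/generation statement and the continuity of filtered automorphisms — is a routine consequence of the Leibniz formula and the growth estimates defining $\she_\dTM$, and I would only sketch it. Once these are in place, the local form $f=\ad(P)$ follows immediately, proving Proposition~\ref{pro:fadp}.
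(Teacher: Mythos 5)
Your overall strategy is sound and, once unwound, rests on the same machinery as the paper's proof (quantized contact transformations over the identity, i.e.\ simple holonomic modules along the diagonal). But as written there are two gaps, one of which is the heart of the matter. The step you flag as ``the nontrivial input from \cite{S-K-K}'' --- given the quantized coordinates $f(x_i), f(\partial_j)$, produce an invertible $P\in\she_\dTM(\lambda)$ with $\ad(P)$ agreeing with $f$ on them --- is essentially the proposition itself, and the authors state explicitly that they lack a reference for precisely these automorphism statements (note in particular that $\lambda$ must be allowed to range over all of $\C$, as $f=\ad(\partial_1^{1/2})$ shows, so the integer-order form of the S-K-K uniqueness statement would not suffice). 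The paper supplies the missing argument: it forms the $\she_{\dTM\times\dTM}$-module $\shl$ generated by $u$ with relations $(x_i-f(y_i))u=(\partial_{x_i}-f(\partial_{y_i}))u=0$, observes that $(\shl,u)$ is simple along the conormal to the diagonal with $f(Q)_y u = Q^*_x u$ for \emph{every} $Q$, and invokes Kashiwara's classification \cite[Theorem~8.21]{Kas03} to write $u=\varphi(P_y\tens\delta_\Delta)$ for an isomorphism $\varphi\colon \she^{[\lambda]}_{\dTM\times\dTM}\tens[\she_{\dTM\times\dTM}]\shc_\Delta\isoto\shl$ and an invertible $P\in\she_\dTM(\lambda)$; a one-line computation then gives $f=\ad(P)$. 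Because the relation in $\shl$ characterizes $f(Q)$ for all $Q$ at once, the paper never needs your step~3.

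Your step~3 also has a flawed justification. The subalgebra generated by $x_1,\dots,x_n,\partial_1,\dots,\partial_n$ is the ring of differential operators, and it is \emph{not} dense in $\she_\dTM$ for the order filtration: $\partial_1^{-1}$ has total symbol $\xi_1^{-1}$, and subtracting any differential operator leaves an operator of order exactly $-1$, so it is not a limit of polynomial expressions in the generators. Moreover ``the centralizer of the generators is $\C$'' (which is true, by the symbol computation as in Lemma~\ref{lem:Elm}) only shows that $P$ is unique up to a scalar; it does not by itself show that an automorphism $g$ fixing the generators is the identity. That conclusion is nevertheless correct: if $N=\sup_Q(\ord(g(Q)-Q)-\ord(Q))$ were finite, then $Q\mapsto\sigma_{\ord(Q)+N}(g(Q)-Q)$ would define a nonzero $\C$-linear derivation of $\gr\she_\dTM\simeq\DSum_m\sho_\dTM(m)$ vanishing on the coordinate functions $x_i,\xi_j$, hence zero --- a contradiction; so $g=\id$. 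With that repair, and with the construction of $P$ carried out via the simple module $\shl$ rather than cited, your argument closes up and coincides with the paper's.
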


\begin{proof}
Identify $\dTM\times\dTM$ to an open subset of $T^*(M\times M)$.
Let $(x)$ be a system of local coordinates on $M$, and denote by $(x,y)$ the
coordinates on $M\times M$. For $Q\in\she_\dTM$, denote by $Q_x$ and $Q_y$ its
pull-back to $\she_{\dTM\times \dTM}$ by the first and second projection,
respectively.

Let $f\colon \she_\dTM \to \she_\dTM$ be a $\C$-algebra automorphism. 
By Lemma~\ref{lem:ffilt}, $f$ is filtered and symbol preserving. 
Denote by $\shl$ the $\she_{\dTM\times \dTM}$-module with one generator $u$ and relations
\[
\big(x_i-f(y_i) \big) \,u= \big(\partial_{x_i} - f(\partial_{y_i}) \big) \,u =0, 
\quad\text{for }i=1,\dots,n. 
\]
Then the image $f(Q)$ of $Q\in\she_\dTM$ is characterized by the relation 
\begin{equation}\label{eq:fQ}
f(Q)_y\, u = Q^*_x \, u 
\quad\text{in }\shl,
\end{equation}
where $Q^*$ denotes the adjoint operator, and 
$(\shl,u)$ is a simple module along the conormal bundle to the diagonal
$\Delta$ in $T^*(M\times M)$ (see~\cite{Kas03}).
Denote by $\shc_\Delta$ the sheaf of complex microfunctions along the conormal bundle to 
$\Delta$. By \cite[Theorem~8.21]{Kas03}, there exist $\lambda\in\C$ and an
isomorphism
\[
\varphi\colon \she_{\dTM\times\dTM}^{[\lambda]} \tens[\she_{\dTM\times\dTM}] \shc_\Delta \isoto \shl,
\] 
so that $\varphi(P_y\tens\delta_\Delta) =u$ for some invertible $P\in
\she_\dTM(\lambda)$. One then has
\[
\begin{split}
P_yQ_yP^{-1}_yu 
&=P_yQ_yP^{-1}_y\varphi(P_y\tens\delta_\Delta)
=\varphi(P_yQ_y\tens\delta_\Delta)
=\varphi(Q^*_xP^*_x\tens\delta_\Delta) \\
&=Q^*_x\varphi(P^*_x\tens\delta_\Delta)
=Q^*_x\varphi(P_y\tens\delta_\Delta)
= Q^*_xu.
\end{split}
\]
It follows by \eqref{eq:fQ} that one has $f = \ad(P)$.
\end{proof}

\subsection{Invertible $\she$-bimodules}

Denote by $P^*M$ the projective cotangent bundle of $M$ and by $\gamma\colon \dTM \to P^*M$ the projection. Set
$$
\she_{P^*M} = \oim\gamma\she_{\dTM}.
$$
This is a sheaf of $\C$-algebras endowed with a $\Z$-filtration such that 
$\gr\she_{P^*M}\simeq \DSum_{m\in\Z}\sho_{P^*M}(m)$, where one sets 
$\sho_{P^*M}(m)=\oim\gamma\sho_{\dTM}(m)$. Note that $\she_{\dTM}$ is constant along 
the fibers of $\gamma$. Since these are connected, the adjunction morphism gives an isomorphism
$$
\opb \gamma \she_{P^*M} \isoto \she_{\dTM}.
$$

\begin{lemma}\label{lem:extE}
Let $Z\subset \dTM$ be a closed conic analytic subset. Then
$$
H^j\rsect_Z\she_\dTM = 0 \qquad \text{for $j<\codim_\dTM Z$.}
$$
\end{lemma}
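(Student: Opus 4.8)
The statement is a vanishing result for local cohomology of the sheaf $\she_\dTM$ along a closed conic analytic subset $Z$ of codimension $\geq 1$, below the codimension. The natural strategy is to compare $\she_\dTM$ with a better-behaved sheaf on which such vanishing is classical, and the obvious candidate is the associated graded sheaf $\gr\she_\dTM \simeq \bigoplus_{m\in\Z}\sho_\dTM(m)$, i.e.\ a direct sum of coherent $\sho_\dTM$-modules (line bundles). For a coherent sheaf $\shf$ on a complex manifold and a closed analytic subset $Z$, one has $H^j_Z\shf = 0$ for $j < \operatorname{codim} Z$; this is a standard consequence of the local structure of coherent analytic sheaves (e.g.\ via local freeness on a dense open, or via the Auslander--Buchsbaum-type depth estimates on the stalks, or Scheja's theorems on extension of coherent sheaves across analytic sets). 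Since the conic condition plays no role in this inequality, we may ignore it here.

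First I would reduce to the graded pieces. The filtration $\{\she_\dTM(m)\}_{m\in\Z}$ is exhaustive and has subquotients $\she_\dTM(m)/\she_\dTM(m-1)\simeq \sho_\dTM(m)$, which are coherent $\sho_\dTM$-modules. The functor $\rsect_Z(\dummy)$ is compatible with filtered colimits of sheaves in the sense that $\rsect_Z(\varinjlim_m \she_\dTM(m)) \simeq \she_\dTM \otimes$-type arguments fail naively, so instead I would argue on each $\she_\dTM(m)$ separately. From the short exact sequences $0\to\she_\dTM(m-1)\to\she_\dTM(m)\to\sho_\dTM(m)\to 0$ and the long exact sequence of $\rsect_Z$, together with $H^j_Z\sho_\dTM(m)=0$ for $j<\operatorname{codim}_\dTM Z$, one gets by induction on $m$ (starting from, say, $\she_\dTM(m)=0$ for $m\ll 0$ in the relevant sense, or rather bootstrapping from one fixed order) that $H^j_Z\she_\dTM(m)=0$ for $j<\operatorname{codim}_\dTM Z$, uniformly in $m$. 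Then $\she_\dTM = \bigcup_m\she_\dTM(m) = \varinjlim_m\she_\dTM(m)$, and since $\rsect_Z$ commutes with filtered inductive limits of sheaves on a (locally compact, finite-dimensional) space — equivalently, since sheaf cohomology and sections with support commute with filtered colimits here — we conclude $H^j_Z\she_\dTM = \varinjlim_m H^j_Z\she_\dTM(m) = 0$ for $j<\operatorname{codim}_\dTM Z$.

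The step I expect to be the main obstacle is the passage to the limit: one must be careful that the isomorphism $\she_\dTM\simeq\varinjlim_m\she_\dTM(m)$ is an isomorphism of sheaves (it is, as the filtration is exhaustive), and that $\rsect_Z$ — hence each $H^j_Z$ — commutes with this filtered colimit. The latter is true on a locally compact space of finite cohomological dimension, which $\dTM$ is; alternatively, one works locally and invokes that $H^j_Z(\dummy)$ on a paracompact space of finite dimension commutes with filtered inductive limits of sheaves. A cleaner alternative avoiding the limit altogether: one may instead use that $\she_\dTM$ is, locally on $\dTM$, flat over $\sho_\dTM$ (via the symbol calculus, $\she_\dTM$ is faithfully flat over $\sho_\dTM$ in appropriate charts) together with a local cohomology estimate for flat modules over the structure sheaf; but I would favor the filtration argument as it is the most transparent and uses only the stated structure $\gr\she_\dTM\simeq\bigoplus_m\sho_\dTM(m)$ together with the classical coherent vanishing.
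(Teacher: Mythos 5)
There is a genuine gap, and it sits exactly where you flagged your main obstacle -- but the problem is not the passage to the colimit, it is the induction itself. The filtration of $\she_\dTM$ is a $\Z$-filtration with no bottom: $\she_\dTM(m)\neq 0$ for every $m\in\Z$, so the short exact sequences $0\to\she_\dTM(m-1)\to\she_\dTM(m)\to\sho_\dTM(m)\to 0$ let you propagate the vanishing from $\she_\dTM(m-1)$ to $\she_\dTM(m)$ but never let you establish it for any single $m$. Worse, all the $\she_\dTM(m)$ are locally isomorphic as sheaves (multiply by a power of a locally invertible operator of order one), so proving $H^j_Z\she_\dTM(m_0)=0$ for one fixed $m_0$ is equivalent to the original problem for $\she_\dTM(0)$, an infinite-order object whose local cohomology is simply not determined by the graded pieces. (The formal completion $\varprojlim_k\she_\dTM(0)/\she_\dTM(-k)$ would be amenable to such a d\'evissage, but $\she_\dTM(0)$ is strictly smaller -- the convergence conditions on total symbols are exactly what the filtration argument cannot see.) The alternative you sketch via flatness over $\sho_\dTM$ does not help either: $\she_\dTM$ carries no natural $\sho_\dTM$-module structure, since total symbols are coordinate-dependent.

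The paper's proof takes a completely different route, and the conic hypothesis you propose to discard is used in its very first step: since $\she_\dTM$ is constant along the fibers of $\gamma\colon\dTM\to P^*M$ and $Z$ is conic, one has $\rsect_Z\she_\dTM\simeq\opb\gamma\rsect_W\she_{P^*M}$ with $W=\gamma(Z)$, reducing the statement to $P^*M$. There $\she_{P^*M}$ is identified with the sheaf $\shc_\Delta$ of microfunctions along the conormal to the diagonal, a quantized contact transformation turns this into the sheaf $\shc_S$ of microfunctions along the conormal to a hypersurface $S$, and one has the structural isomorphism $\shc_S\simeq\sho_S\oplus H^1_{[S]}\sho_{P^*}\simeq\sho_S^{\oplus\Z}$ -- a genuine \emph{direct sum} of copies of the structure sheaf, not a filtered union, so the classical coherent vanishing applies termwise. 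This direct-sum decomposition is where the analytic (convergence) content of $\she$ is packaged, and it is the ingredient your argument is missing. If you want to repair your proof you would need a substitute for it; the graded sheaf $\gr\she_\dTM\simeq\bigoplus_m\sho_\dTM(m)$ alone cannot supply one.
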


\begin{proof}
Setting $W=\gamma(Z)$, we have $\rsect_Z\she_\dTM \simeq
\opb\gamma\rsect_W\she_\PM$.
We thus have to show that $H^j\rsect_W\she_\PM = 0$ for
$j<\codim_\PM W$. Identify $\she_\PM$ with the sheaf $\shc_\Delta$ of 
complex microfunctions along the conormal bundle to the diagonal in $P^*=P^*(M\times M)$.
By quantized contact transformations, $\shc_\Delta$ can further be identified
with the sheaf of complex microfunctions $\shc_S$ along the conormal bundle to
a hypersurface $S\subset P^*$. One has $\shc_S \simeq
\sho_S\dsum H^1_{[S]}\sho_{P^*} \simeq \sho_S^{\oplus\Z}$. Hence $H^j\rsect_W\shc_S
= 0$ for $j<\codim_SW$.
\end{proof}

\begin{proposition}
\label{pr:M**}
Let $\shm$ be a coherent torsion-free $\she_\dTM$-module. Then $\shm$ is locally
free outside a closed conic analytic $2$-codimensional subset.
\end{proposition}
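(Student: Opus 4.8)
The plan is to reduce the statement to a standard coherence argument on the sheaf $\she_\dTM$, using the fact that the singular locus of a coherent module over a reasonable filtered ring is analytic and that torsion-freeness controls its codimension. First I would localize: the question is local on $\dTM$, so I may assume $\shm$ is generated over $\she_\dTM$ by finitely many sections, say $\shm = \she_\dTM^p/\shn$ for a coherent submodule $\shn\subset\she_\dTM^p$. Consider the characteristic variety $\charv(\shm)\subset\dTM$, which is a closed conic analytic subset; outside $\charv(\shm)$ the module $\shm$ vanishes. More to the point, let $Z\subset\dTM$ be the set of points at which $\shm$ is \emph{not} locally free. I would show $Z$ is a closed conic analytic subset: local freeness of a coherent module is an open condition, and the locus where the rank of a coherent module jumps is cut out by vanishing of appropriate minors of a local presentation matrix (after choosing a good filtration and passing to the associated graded, which is a coherent $\gr\she_\dTM\simeq\bigoplus_m\sho_\dTM(m)$-module), hence analytic and conic.

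The heart of the matter is the codimension estimate: $\codim_\dTM Z\geq 2$. Here I would use torsion-freeness together with Lemma~\ref{lem:extE}. The standard mechanism is that for a coherent module $\shm$ over $\she_\dTM$, the codimension of the locus where $\shm$ fails to be locally free is governed by the vanishing of the local cohomology sheaves $\shExt^j_{\she_\dTM}(\shm,\she_\dTM)$ — by a microlocal analogue of the Auslander–Buchsbaum/purity picture, $\shm$ torsion-free means $\shHom_{\she_\dTM}(\she_\dTM/\she_\dTM\cdot\text{(zero divisors)},\shm)$ vanishes, equivalently the first obstruction $\shExt^1$ to reflexivity is supported in codimension $\geq 2$. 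Concretely: a torsion-free coherent $\she_\dTM$-module injects into its double dual $\shm^{**}=\shHom_{\she_\dTM}(\shHom_{\she_\dTM}(\shm,\she_\dTM),\she_\dTM)$, the cokernel of $\shm\to\shm^{**}$ is supported in codimension $\geq 2$ (this is where the $\she$-analogue of the finiteness/duality theorem enters, via Lemma~\ref{lem:extE} applied to a conic analytic $Z$ of small codimension to kill low-degree local cohomology), and $\shm^{**}$ is reflexive; a reflexive coherent module over the (microlocally) regular ring $\she_\dTM$ is locally free outside codimension $\geq 3$. Intersecting, $\shm$ itself is locally free outside a conic analytic set of codimension $\geq 2$.

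In carrying this out I would proceed in the following order: (1) reduce to the local, finitely generated case and fix a good filtration so that $\gr\shm$ is coherent over $\gr\she_\dTM$; (2) identify the non-local-freeness locus $Z$ with the support of a coherent $\mathcal{O}$-module (minors of a presentation), hence show $Z$ is closed, conic and analytic; (3) prove that on the complement of any conic analytic set of codimension $\leq 1$ the dual $\shHom_{\she_\dTM}(\shm,\she_\dTM)$ is well-behaved, using Lemma~\ref{lem:extE} to see $H^0\rsect_Z\she_\dTM = H^1\rsect_Z\she_\dTM = 0$ when $\codim Z\leq 1$ is excluded, which forces any torsion section supported on such a small $Z$ to vanish; (4) conclude via the double-dual argument that $\shm$ agrees with a reflexive module outside codimension $2$, and that reflexive coherent $\she_\dTM$-modules are locally free outside codimension $2$ (in fact $3$). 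The main obstacle I expect is step (3)–(4): making precise the homological-codimension/purity statement for $\she_\dTM$ — that $\she_\dTM$ behaves like a regular ring of the relevant dimension and that torsion-free plus codimension bookkeeping yields reflexivity outside codimension $2$. This is exactly the point where one must invoke the structural results on $\she$ (Cohen–Macaulay/purity for $\she$-modules, or equivalently the behaviour of $\rsect_Z\she_\dTM$ from Lemma~\ref{lem:extE}) rather than naive commutative-algebra intuition; everything else is formal.
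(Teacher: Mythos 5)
There is a genuine gap, and it sits exactly where the paper has to do real work. Your outline correctly identifies that the statement should be reduced to the classical fact that a torsion-free coherent $\sho$-module is locally free outside an analytic subset of codimension $\geq 2$, but the two mechanisms you propose for the reduction do not hold up. First, the ``purity'' step: you assert that a reflexive coherent $\she_\dTM$-module is locally free outside codimension $3$, and that torsion-freeness forces the double-dual defect into codimension $2$, by analogy with regular commutative rings. For the non-commutative ring $\she_\dTM$ this is not an off-the-shelf fact --- it is essentially equivalent to the proposition being proved, so invoking it is circular. Second, Lemma~\ref{lem:extE} cannot carry the load you assign to it: it computes $\rsect_Z\she_\dTM$ for the sheaf $\she_\dTM$ itself (via its identification with microfunctions, i.e.\ with $\sho_S^{\oplus\Z}$), and says nothing about local cohomology or Ext sheaves of an arbitrary coherent $\she_\dTM$-module; in the paper it is used only later, in Theorem~\ref{thm:Eeloc}, to extend a section of an \emph{invertible} (hence flat) bimodule across $Z$.

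The missing idea is the lattice argument. The actual reduction to the $\sho$-module statement goes through a coherent $\she_\dTM(0)$-submodule $\shl\subset\shm$ with $\she_\dTM\shl=\shm$, and one needs the symbol module $\overline\shl=\shl/\shl(-1)$ over $\sho_\dTM(0)$ to be torsion-free --- otherwise the non-locally-free locus of $\overline\shl$ can be a hypersurface even though $\shm$ is torsion-free, and the codimension-$2$ bound is lost. Your step of ``choosing a good filtration and passing to the associated graded'' ignores precisely this: an arbitrary good filtration on a torsion-free module need not have torsion-free graded. The paper produces a suitable lattice in two steps: (a) $\shl=\shf^*\cap\shm$ for $\shf$ a lattice in $\shm^*$, using that $\shf^*$ is reflexive hence torsion-free; (b) a dualization over the exact sequence $0\to\she_\dTM(-1)\to\she_\dTM(0)\to\sho_\dTM(0)\to 0$ showing $\overline\shl$ embeds into $(\overline{\shl^*})^{*}$, hence is torsion-free. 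Only then does the classical result apply to $\overline\shl$, and Nakayama transfers local freeness back to $\shl$ and to $\shm=\she_\dTM\shl$. Without (a) and (b) your argument does not close.
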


\begin{proof}
We will reduce to the analogous statement for $\sho$-modules, which is well-known 
(see \cite[Corollary 5.15]{Kob87}).

Set for short $\she=\she_\dTM$, $\she(0)=\she_\dTM(0)$ and $\sho(0)=\sho_\dTM(0)$. 
A coherent $\she(0)$-submodule $\shl\subset\shm$ such that $\she\shl=\shm$ is called a lattice.

(a) $\shm$ has a torsion-free lattice $\shl$. In fact, let $\shf$ be a lattice
in $\shm^* = \shHom[\she](\shm,\she)$. Then $\shf^* =
\shHom[\she(0)](\shf,\she(0))\subset \shHom[\she](\shm^*,\she) = \shm^{**}$ and
$\she\shf^*=\shm^{**}$, i.e.~$\shf^*$ is a lattice in $\shm^{**}$. Then
$\shl=\shf^*\cap\shm$ is a lattice in $\shm$. Since $\shf^{*}$ is reflexive (that is, $\shf^{*} \to (\shf^*)^{**}$ is an isomorphism), $\shf^*$ is torsion free, and so is its submodule $\shl$.

(b) The coherent $\sho(0)$-module $\overline\shl = \shl/\shl(-1)$ is torsion-free. In fact, 
consider the exact sequence
\[
0 \to \she(-1) \to \she(0) \to[\sigma_0] \sho(0) \to 0.
\]
Then $\sho(0) \tens[\she(0)] \shl \simeq \overline\shl$. Hence $(\overline\shl)^{*} =
\shHom[\sho(0)](\overline\shl,\sho(0)) \simeq \shHom[\sho(0)](\sho(0) \tens[\she(0)] \shl,\sho(0)) \simeq
\shHom[\she(0)](\shl,\sho(0))$. The exact sequence
\[
0 \to \shHom[\she(0)](\shl,\she(-1) ) \to \shHom[\she(0)](\shl,\she(0) ) \to
\shHom[\she(0)](\shl,\sho(0))
\]
thus reads
\[
0 \to \shl^*(-1) \to \shl^* \to (\overline\shl)^{*}.
\]
Hence $\overline{\shl^*} \subset (\overline\shl)^{*}$. Then
$\overline\shl\subset\overline{\shl^{**}}\subset (\overline{\shl^*})^* \isoto
(\overline{\shl^*})^{***}$, so that $\overline\shl$ is torsion-free.

(c) Since $\overline\shl$ is torsion-free, it is locally free outside a
closed conic analytic $2$-codimensional subset $S$. Hence the same holds true for $\shl$ by Nakayama
lemma. Thus $\shm=\she\shl$ is also locally free outside $S$.
\end{proof}

\begin{remark}
\label{rem:tf}
Since projective $\she_\dTM$-modules are torsion-free, it follows that $\she_\dTM$ is (coherent)
projective-free if $\dim M = 1$. This is no more true if $\dim M > 1$.
\end{remark}

Set
\[
\she_\dTM^e = \she_\dTM^\op\tens[\C]\she_\dTM.
\]
Note that, for $[\lambda],[\mu]\in\C/\Z$  the morphism of $\she_\dTM^e$-modules
$$
\she_\dTM^{[\lambda]}\tens[\she_\dTM]\she_\dTM^{[\mu]} \to \she_\dTM^{[\lambda + \mu]}, \quad 
P\tens Q \mapsto PQ
$$
is an isomorphism. In particular, $\she_\dTM^{[\lambda]}$ is an invertible $\she_\dTM^e$-module.
Moreover, if $P\in\she_\dTM(\lambda)$ has non-vanishing symbol on $V\subset \dTM$, 
there is an isomorphism of $\she_V^e$-modules (where we use notation~\eqref{eq:2modalg})
\begin{equation}
\label{eq:Elloc}
{}_{\ad(\opb P)}\she_V \isoto \she_V^{[\lambda]}, \quad
Q \mapsto PQ.
\end{equation}

\begin{lemma}\label{lem:Elm}
For $[\lambda],[\mu]\in\C/\Z$, one has
\[
\shHom[\she_\dTM^e](\she_\dTM^{[\lambda]},\she_\dTM^{[\mu]}) =
\begin{cases}
\C_\dTM &\text{for }[\lambda] = [\mu], \\
0 &\text{otherwise}.
\end{cases}
\]
\end{lemma}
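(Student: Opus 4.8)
The plan is to compute the hom-sheaf locally and then glue. First I would reduce to the case $[\lambda]=[\mu]=[0]$: since $\she_\dTM^{[\mu]}\tens[\she_\dTM]\she_\dTM^{[-\lambda]}\simeq\she_\dTM^{[\mu-\lambda]}$ and $\she_\dTM^{[\lambda]}$ is an invertible $\she_\dTM^e$-module (as recalled just before the statement), tensoring by $\she_\dTM^{[-\lambda]}$ on the right induces an isomorphism
\[
\shHom[\she_\dTM^e](\she_\dTM^{[\lambda]},\she_\dTM^{[\mu]})\isoto\shHom[\she_\dTM^e](\she_\dTM,\she_\dTM^{[\mu-\lambda]}).
\]
So it suffices to identify $\shHom[\she_\dTM^e](\she_\dTM,\she_\dTM^{[\nu]})$ with $\C_\dTM$ if $[\nu]=0$ and with $0$ otherwise. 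A section of this sheaf over $V$ is, by the very definition of $\she_\dTM^e=\she_\dTM^\op\tens[\C]\she_\dTM$-module maps, an element $T\in\she_\dTM^{[\nu]}(V)$ commuting with the left and right actions of $\she_\dTM$, i.e.\ $TP=PT$ for all local sections $P$ of $\she_\dTM$ — in other words $T$ lies in the centralizer of $\she_\dTM$ inside $\she_\dTM^{[\nu]}$.

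Next I would analyze this centralizer. The key point is that $\she_\dTM$ is a \emph{central} $\C$-algebra, so the centralizer of $\she_\dTM$ in $\she_\dTM=\she_\dTM^{[0]}$ is exactly $\C_\dTM$, giving the case $[\nu]=0$. For general $[\nu]$, suppose $T\in\she_\dTM^{[\nu]}(V)$ is a nonzero central element; I would use the order filtration. If $T$ has order $\alpha$ (with $[\alpha]=[\nu]$) and is nonzero, look at its principal symbol $\sigma(T)\in\sho_\dTM(\alpha)$. Commuting with an operator $P$ of order $m$ forces, at the level of symbols, $\{\sigma(P),\sigma(T)\}=0$ whenever $\sigma_m(P)\sigma_\alpha(T)$ has order $<m+\alpha$, and more basically $\sigma_{m+\alpha}(PT-TP)=0$ automatically; the real constraint comes from lower-order terms. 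A cleaner route: commuting with all operators of order $0$ whose symbols generate — in particular with (local models of) the coordinate functions $x_i$ and with an elliptic operator $D$ of order $1$ — forces $\{x_i,\sigma(T)\}=0$ for all $i$, hence $\partial_{\xi_i}\sigma(T)=0$, so $\sigma(T)=\sigma(T)(x)$; but $\sigma(T)$ is homogeneous of degree $\alpha$ in $\xi$, so this is impossible unless $\alpha=0$. Hence a nonzero central $T$ must have order $0$, forcing $[\nu]=0$, and then $T\in\C_\dTM$ by centrality of $\she_\dTM$. This simultaneously settles the vanishing when $[\nu]\neq0$ and pins down the $[\nu]=0$ case.

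The main obstacle is making the symbol-bracket argument fully rigorous when $\sigma(T)$ could a priori depend on $\xi$ in a way that the bracket with lower-order pieces compensates; that is, one must be careful that $\{x_i,\sigma_1(D)\}$-type identities genuinely propagate to all orders and do not merely constrain the top symbol. I expect to handle this exactly as in the proof of Lemma~\ref{lem:ffilt}: identify $x_i$ with the order-$0$ operator of total symbol $x_i$, compute $[x_i,T]$ and read off $\partial_{\xi_i}\sigma(T)$ from its symbol, concluding that $\sigma(T)$ is independent of $\xi$ and hence (by homogeneity) that $T$ has order $0$. The sheaf statement then follows since all these arguments are local and compatible with restriction, and $\C_\dTM$ is visibly the sheaf of such central order-$0$ elements. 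Finally, I would note the isomorphism is the natural one sending $c\in\C$ to multiplication by $c$, so it is canonical, which is what is needed for the later gluing arguments with $\she$-algebroids.
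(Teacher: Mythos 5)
Your proof is correct, but it takes a genuinely different route from the paper's. The paper works entirely inside $\she_\dTM$: it chooses local coordinates with $\partial_1$ invertible, uses the trivialization $\she_\dTM^{[\lambda]}\simeq{}_{\ad(\partial_1^{-\lambda})}(\she_\dTM)$ of~\eqref{eq:Elloc} to rewrite the Hom sheaf as $\{P\in\she_\dTM : P\,\partial_1^{-\lambda}Q\,\partial_1^{\lambda}=\partial_1^{-\mu}Q\,\partial_1^{\mu}P,\ \forall Q\}$, then tests against $Q=\partial_1,x_i,\partial_i$ ($i\geq 2$) to show $P$ depends only on $\partial_1$, and finally against $Q=x_1$ to get $[x_1,P]=(\mu-\lambda)P\partial_1^{-1}$, which upon expanding $P=\sum_{j\le m}c_j\partial_1^j$ forces $m=\mu-\lambda\in\Z$ and $P=c_m\partial_1^m$. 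You instead use the bimodule invertibility of $\she_\dTM^{[\lambda]}$ to reduce to the centralizer of $\she_\dTM$ in $\she_\dTM^{[\nu]}$, kill nonzero orders by the principal-symbol computation $\partial_{\xi_i}\sigma(T)=0$ combined with the Euler identity $\sum_i\xi_i\partial_{\xi_i}\sigma(T)=\alpha\,\sigma(T)$, and then invoke centrality of $\she_\dTM$ for the remaining order-zero case. Both arguments are sound; your worry about lower-order compensation is unfounded, since $\sigma_{\alpha-1}([x_i,T])=\{x_i,\sigma_\alpha(T)\}$ holds exactly (the total symbol of $x_i$ has no lower-order part), exactly as in the proof of Lemma~\ref{lem:ffilt}. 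What each buys: the paper's computation is self-contained and in particular re-derives, rather than assumes, that the center of $\she_\dTM$ is $\C_\dTM$ (it is the case $\lambda=\mu=0$ of the expansion in powers of $\partial_1$), and it handles both cases of the statement in one stroke; your version is lighter on explicit operator algebra, reuses the invertibility of $\she_\dTM^{[\lambda]}$ already established, and matches the symbol-calculus style of Lemma~\ref{lem:ffilt}, at the cost of quoting the centrality of $\she_\dTM$ (which the paper does state in \S\ref{se:mic}, so this is legitimate). One small point to make explicit: to apply the symbol argument you should pass to a point where the nonzero section $T$ has a well-defined exact order $\alpha$ with $\sigma_\alpha(T)\neq 0$; this is harmless since the statement is a sheaf identity and the filtration on $\she_\dTM^{[\nu]}$ is exhaustive and separated.
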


\begin{proof}
The problem is local and we take a system $(x) = (x_1,\dots,x_n)$ of
local coordinates in $V\subset \dTM$ such that $\partial_1$ is invertible in $V$. 
By~\eqref{eq:Elloc}
\begin{align*}
\shHom[\she_{V}^e](\she_{V}^{[\lambda]},\she_{V}^{[\mu]}) 
&\simeq \shHom[\she_{V}^e]({}_{\ad(\partial_1^{-\lambda})}\she_{V},
{}_{\ad(\partial_1^{-\mu})} \she_{V}) \\
&\simeq \{ P\in\she_{V} \colon P \partial_1^{-\lambda} Q \partial_1^{\lambda} = 
\partial_1^{-\mu} Q \partial_1^{\mu} P,\ \forall Q\in\she_{V} \}.
\end{align*}
Assume that there exists $P\neq 0$ as above.
Taking for $Q$ the operators $\partial_1$, $x_i$ and $\partial_i$,
respectively, we deduce that $[P,\partial_1] = [P,x_i] = [P,\partial_i] = 0$
for $i=2,\dots,n$. It follows that $P$ only depends on $\partial_1$. Noting
that $[\partial_1^\lambda,x_1] = \lambda\partial_1^{\lambda-1}$ and taking
$Q=x_1$, we get
\[
[x_1,P] = (\mu - \lambda) P \partial_1^{-1}.
\]
Write $P = \sum_{j\leq m}c_j
\partial_1^j$ with $c_i\in\C$ and $c_m\neq 0$. Then the above equality gives $m=\mu -
\lambda$ and $c_j=0$ for $j<m$.
\end{proof}

The following result was communicated to us by Masaki Kashiwara (refer to \cite{KV10} for related results).

\begin{theorem}\label{thm:Eeloc}
Any invertible $\she_\dTM^e$-module is isomorphic to
$L\tens[\C]\she_\dTM^{[\lambda]}$, for some local system of rank one $L$ and some locally constant $\C/\Z$-valued function $[\lambda]$.
\end{theorem}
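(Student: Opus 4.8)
The plan is to discard a closed conic analytic subset of $\dTM$ of codimension at least two, on whose complement the module is locally free, to describe it there by the structure theorem for automorphisms of $\she_\dTM$ (Proposition~\ref{pro:fadp}), and finally to glue the description back across the removed set by a depth argument. So let $\shp$ be an invertible $\she_\dTM^e$-module. Being invertible, $\shp$ is of finite type and projective — hence coherent and torsion-free — as a right $\she_\dTM$-module, and symmetrically as a left one. By Proposition~\ref{pr:M**}, applied to both structures, there is a closed conic analytic subset $S\subset\dTM$ with $\codim_\dTM S\geq 2$ such that $\opb j\shp$ is locally free over $\she_U$ on both sides, where $j\colon U=\dTM\setminus S\hookrightarrow\dTM$ is the inclusion; its rank is one, since for $\shp$ invertible its endomorphism algebra as a one-sided module is locally isomorphic to $\she_\dTM$ (Definition~\ref{def:inv} and the conditions following it), while $M_r(\she_\dTM)\simeq\she_\dTM$ forces $r=1$.

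On $U$ I would argue as follows. Locally choose a generator of $\shp$ as a right $\she_U$-module; by Remark~\ref{rem:LocFreeBim} (with $\sha=\shb=\she_\dTM$ and $\shp$ invertible) this produces a local $\C$-algebra automorphism $f$ of $\she_\dTM$ together with an isomorphism ${}_f\she_\dTM\isoto\shp$ of $\she_\dTM^e$-modules. By Proposition~\ref{pro:fadp}, $f$ is locally of the form $\ad(P)$ for an invertible $P\in\she_\dTM(\mu)$, some $\mu\in\C$, so~\eqref{eq:Elloc} gives a local isomorphism $\shp\simeq\she_\dTM^{[\mu]}$ of $\she_\dTM^e$-modules. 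By Lemma~\ref{lem:Elm} the locally defined classes in $\C/\Z$ agree on overlaps, hence glue to a locally constant $\C/\Z$-valued function $[\lambda]$ on $U$; setting $L=\shHom[\she_U^e](\she_U^{[\lambda]},\opb j\shp)$, the same lemma shows $L$ is locally isomorphic to $\C_U$, so it is a local system of rank one, and the evaluation morphism $L\tens[\C]\she_U^{[\lambda]}\to\opb j\shp$, being locally an isomorphism, is an isomorphism of $\she_U^e$-modules.

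It remains to glue across $S$. As $S$ has complex codimension at least two, the inclusion $j$ is bijective on connected components and an isomorphism on fundamental groups, so $[\lambda]$ and $L$ extend uniquely from $U$ to $\dTM$; this yields a global $\she_\dTM^e$-module $L\tens[\C]\she_\dTM^{[\lambda]}$ restricting on $U$ to the one found above. Now both $\shp$ and $L\tens[\C]\she_\dTM^{[\lambda]}$ are of finite type and projective as right $\she_\dTM$-modules — the latter because $\she_\dTM^{[\lambda]}$ is locally free of rank one over $\she_\dTM$ by~\eqref{eq:Elloc} and $L$ has rank one. For any such module $\shm$, writing it locally as a direct summand of $\she_\dTM^{\oplus N}$ and using the isomorphism $\she_\dTM\isoto\oim j\opb j\she_\dTM$ — which holds by Lemma~\ref{lem:extE}, since $H^k\rsect_S\she_\dTM=0$ for $k<2$ — one sees that the adjunction morphism $\shm\to\oim j\opb j\shm$ is an isomorphism. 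Applying $\oim j$ to $L\tens[\C]\she_U^{[\lambda]}\isoto\opb j\shp$ and composing with these two isomorphisms gives the desired global isomorphism $L\tens[\C]\she_\dTM^{[\lambda]}\isoto\shp$ of $\she_\dTM^e$-modules.

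The crux of the argument is the control of $\shp$ near the bad locus $S$: away from it everything is dictated by the automorphisms of $\she_\dTM$, but as $\she_\dTM$ is not a sheaf of local rings — indeed it is not projective-free in general (Remark~\ref{rem:tf}) — both the codimension estimate of Proposition~\ref{pr:M**} and the depth-type vanishing of Lemma~\ref{lem:extE} are indispensable. The left/right symmetry of the bimodule structure is a minor point, handled by enlarging $S$.
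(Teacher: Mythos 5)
Your proof is correct and follows essentially the same route as the paper's: invertibility makes $\shp$ a coherent torsion\-/free one-sided module, Proposition~\ref{pr:M**} isolates a conic analytic subset of codimension two off which Remark~\ref{rem:LocFreeBim}, Proposition~\ref{pro:fadp} and \eqref{eq:Elloc} identify $\shp$ locally with $\she_\dTM^{[\lambda]}$, and Lemmas~\ref{lem:Elm} and~\ref{lem:extE} supply the rank-one system $L$ and the passage across the bad locus. The only difference is organizational and harmless: the paper extends a regular generator of $\shp^{[-\lambda]}$ over a contractible neighborhood of a point of the singular set, whereas you extend $L$ and $[\lambda]$ topologically (legitimate, since the complement of a complex-codimension-two analytic subset has the same $\pi_0$ and $\pi_1$) and then transport the whole isomorphism through $\shm \isoto \oim j\opb j\shm$ --- both extensions rest on the same vanishing $H^j\rsect_Z\she_\dTM=0$ for $j<2$ from Lemma~\ref{lem:extE}.
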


\begin{proof}
Set for short $\she=\she_\dTM$. Let $\shp$ be an invertible $\she^e$-module. 
It is enough to show that $\shp$ is locally  isomorphic to
$\she^{[\lambda]}$ for some locally constant function $[\lambda]$.
In fact, it will follow from Lemma~\ref{lem:Elm} that 
$L = \shHom[\she^e](\she^{[\lambda]},\shp)$ is a local system of rank one and
$L\tens[\C]\she^{[\lambda]} \isoto \shp$.

(a) Since $\shp$ is invertible, the underlying $\she$-module ${}_{\bullet}\shp$ is projective locally of finite presentation by (iv) and (v)  in Section~\ref{sse:Morita}, and hence coherent torsion-free. By Proposition~\ref{pr:M**}, ${}_{\bullet}\shp$ is locally free outside a closed analytic $2$-codimensional subset $Z$. As $\shp$ is invertible, its rank is one.

(b) Suppose that ${}_{\bullet}\shp$ is free of rank one. Then there exists 
$[\lambda]$ such that $\shp^{[-\lambda]} = \shp \tens[\she^e] \she^{[-\lambda]}$ admits a regular generator, i.e.~a generator $u$ of ${}_{\bullet}\shp^{[-\lambda]}$ such that $Pu = uP$ for any $P\in\she$. Indeed, let $t$ be a generator of ${}_{\bullet}\shp$ and let $f\colon\she\isoto \she$, 
be the $\C$-algebra isomorphism as in ~\eqref{eq:LocFreeBim}: $f(P) = Q$ for $Q$ such that $tP=Qt$.
By Proposition~\ref{pro:fadp}, $f$ is locally of the form $\ad(P)$ for some $\lambda\in \C$ and 
$P\in \she(\lambda)$ with never vanishing symbol. Then $u=t\opb P$ is a regular generator of $\shp^{[-\lambda]}$.

Let $V$ be a contractible open neighborhood of a point in $Z$.
We are left to show that if ${}_{\bullet}\shp$ is locally free of rank one on 
$V\setminus Z$, then ${}_{\bullet}\shp^{[-\lambda]}$ has a regular generator on $V$. 
It will follow that $\shp|_{V}\simeq \she^{[\lambda]}_{V}$.

(c) Since local regular generators $u$ of $\shp^{[-\lambda]}$ are unique up to multiplicative 
constants, $\C u \subset \shp^{[-\lambda]}$ defines a local system of rank one on $V\setminus Z$.
As $V\setminus Z$ is simply connected, such local system is constant. 
Thus $\shp^{[-\lambda]}$ has a regular generator $u$ on $V\setminus Z$. 

Consider the distinguished triangle
$$
\rsect_Z \shp^{[-\lambda]} \to \shp^{[-\lambda]} \to \rsect_{V\setminus Z}\shp^{[-\lambda]} \to[+1]
$$
Since $\shp^{[-\lambda]}$ is invertible, then ${}_{\bullet}\shp^{[-\lambda]}$ is flat by (iv)  in Section~\ref{sse:Morita}, so that 
$$\rsect_Z(V;\shp^{[-\lambda]}) \simeq \rsect(V;\rsect_Z\she\tens[\she]\shp^{[-\lambda]}).$$
By Lemma~\ref{lem:extE} one gets $H^j\rsect_Z(V;\shp^{[-\lambda]})  = 0$ for $j=0,1$. 
It follows that $\sect(V;\shp^{[-\lambda]})\isoto\sect(V\setminus Z;\shp^{[-\lambda]})$, hence the generator $u$ of ${}_{\bullet}\shp^{[-\lambda]}$ on $V\setminus Z$ extends uniquely to $V$.

\end{proof}
In particular, since any $\she_\dTM^{[\lambda]}$ is a locally free  right 
$\she_\dTM$-module of rank one  by~\eqref{eq:Elloc}, it follows that the $\C$-algebra $\she_\dTM$ is Picard good.

Recall that the projection  $\gamma\colon \dTM \to P^*M$ is a principal $\C^\times$-bundle. 

\begin{theorem}\label{thm:localEPicardGood}
The $\C$-algebra $\she_{P^*M}$ is Picard good.
\end{theorem}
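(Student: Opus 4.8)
The plan is to reduce the statement for $\she_{P^*M}$ to the already-established fact (from Theorem~\ref{thm:Eeloc} and the remark following it) that $\she_\dTM$ is Picard good, using the principal $\C^\times$-bundle $\gamma\colon \dTM \to P^*M$ and the relation $\she_{P^*M} = \oim\gamma\she_\dTM$, $\opb\gamma\she_{P^*M}\isoto\she_\dTM$. Since Picard-goodness means that every invertible $\she_{P^*M}^e$-module is locally free of rank one over $\she_{P^*M}^\op$, and since this is a local condition on $P^*M$, it suffices to analyze an invertible bimodule $\shp$ on a conic open subset, pull it back to $\dTM$, and descend the resulting structure.

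First I would take an invertible $\she_{P^*M}^e$-module $\shp$. Then $\opb\gamma\shp$ is an invertible $\she_\dTM^e$-module (inverse image preserves tensor products and invertibility), so by Theorem~\ref{thm:Eeloc} it is of the form $L\tens[\C]\she_\dTM^{[\lambda]}$ for a rank-one local system $L$ on $\dTM$ and a locally constant $[\lambda]\colon\dTM\to\C/\Z$. Working locally on $P^*M$ over a contractible (hence simply connected with connected $\C^\times$-fibers) open set, $L$ is constant and $[\lambda]$ is constant, so $\opb\gamma\shp \isoto \she_\dTM^{[\lambda]} \isoto {}_{\ad(\opb P)}\she_\dTM$ for an invertible microdifferential operator $P$ of order $\lambda$, by~\eqref{eq:Elloc}. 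In particular $\opb\gamma\shp$ is locally free of rank one as a right $\she_\dTM$-module.

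The key point is then descent: I want to conclude that $\shp$ itself is locally free of rank one over $\she_{P^*M}^\op$, i.e. that the functor $\shp(\objd)\colon (\she_{P^*M}^\op)^\approx\to\stkMod(\C_{P^*M})$ is representable, equivalently that $\shp$ admits a local generator $u$ over $\she_{P^*M}$ satisfying $f(Q)u = uQ$ for the algebra automorphism $f$ attached to $\shp$ via~\eqref{eq:LocFreeBim}. Choosing a regular generator of $\opb\gamma\shp^{[-\lambda]} = {}_{\bullet}(\opb\gamma\shp)\tens\opb\gamma\she^{[-\lambda]}$ over a contractible conic open $\opb\gamma(U)$, I would show it is $\gamma$-invariant — since regular generators are unique up to a multiplicative constant and $\opb\gamma(U)$ is connected along the $\C^\times$-fibers, the generator descends to a section of $\shp^{[-\lambda]}$ over $U$ — and hence $\shp^{[-\lambda]}$ has a regular generator over $U$, giving $\shp|_U\simeq\she_{P^*M}^{[\lambda]}|_U$, which is locally free of rank one over $\she_{P^*M}^\op$ by $\oim\gamma$ of~\eqref{eq:Elloc}.

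The main obstacle I expect is the descent step proper: matching the $\C^\times$-equivariance hidden in the factorization through $\gamma$, i.e. checking that the constant $[\lambda]\in\C/\Z$ and the local system really do descend (a priori $\opb\gamma\shp$ carries no canonical equivariant structure), and that "regular generator upstairs" transfers to "regular generator downstairs" compatibly with the algebra isomorphisms $f$ on $\she_{P^*M}$ versus $\opb\gamma f$ on $\she_\dTM$. I would handle this by noting $\she_\dTM$ is constant along the connected fibers of $\gamma$, so any $\C$-algebra automorphism of $\she_\dTM$ over $\opb\gamma(U)$ that comes from one of $\she_{P^*M}$ over $U$ is automatically $\gamma$-invariant, forcing $[\lambda]$ to be constant on $U$ and the generator $u=t\opb P$ of $\opb\gamma\shp^{[-\lambda]}$ to be $\gamma$-invariant; then $\Gamma(\opb\gamma(U);\shp^{[-\lambda]}) \simeq \Gamma(U;\oim\gamma\shp^{[-\lambda]}) = \Gamma(U;\shp^{[-\lambda]})$ yields the descended regular generator, completing the proof.
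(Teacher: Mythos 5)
Your overall strategy (pull back along $\gamma$, apply Theorem~\ref{thm:Eeloc}, descend) is the paper's, but there is a genuine gap in the descent step, located exactly where you anticipated trouble. The claim that $L$ is constant over a contractible $U\subset P^*M$ is false: $\opb\gamma(U)\simeq U\times\C^\times$ is never simply connected, no matter how small $U$ is, so rank-one local systems on it are classified by their monodromy along the $\C^\times$-fiber. In fact that monodromy is forced to be $e^{-2\pi i\lambda}$: since $\she_{P^*M}=\oim\gamma\she_\dTM$ and the fibers are connected, one has $\shp\isoto\oim\gamma\opb\gamma\shp$, and (as noted after the paper's proof) $\oim\gamma(L\tens[\C]\she_\dTM^{[\lambda]})\neq 0$ precisely when the monodromy of $L$ is $e^{-2\pi i\lambda}$ — the nonzero sections of $\she_\dTM^{[\lambda]}$ along a fiber themselves carry monodromy $e^{2\pi i\lambda}$, which must be cancelled by $L$. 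So for $[\lambda]\neq 0$ the local system $L$ is necessarily nontrivial, $\opb\gamma\shp$ is \emph{not} globally isomorphic to $\she_{\opb\gamma(U)}^{[\lambda]}$ (by Lemma~\ref{lem:Elm} such an isomorphism is a global flat section of $L$), and your intended conclusion $\shp|_U\simeq\she_{P^*M}^{[\lambda]}|_U$ is vacuous, since $\oim\gamma\she_\dTM^{[\lambda]}=0$ for $[\lambda]\neq 0$.

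The same monodromy kills your descent of the regular generator: $\opb\gamma\shp^{[-\lambda]}\simeq L\tens[\C]\she_\dTM$, and the line $\C u$ spanned by local regular generators is exactly the local system $L$; uniqueness up to a multiplicative constant plus connectedness of the fiber only produces a rank-one local system on $\opb\gamma(U)$, and here it has monodromy $e^{-2\pi i\lambda}\neq 1$, so $u$ does not extend around the fiber, let alone descend. (Connectedness is the right hypothesis in step (c) of the proof of Theorem~\ref{thm:Eeloc}, where the complement of $Z$ is simply connected; it is not sufficient on $U\times\C^\times$.) The paper's fix is to abandon regular generators downstairs and instead exhibit a generator of $\shp\simeq\oim\gamma(L\tens[\C]\she_{\opb\gamma(U)}^{[\lambda]})$ as a right $\she_{P^*M}$-module directly: after shrinking $U$ so that an invertible operator $D$ of order $1$ exists, the map $Q\mapsto D^\lambda Q$ is an isomorphism of right modules, because the multivaluedness of $D^\lambda$ (monodromy $e^{2\pi i\lambda}$ along the fiber) exactly compensates the monodromy of $L$, producing a single-valued, fiberwise-invariant generator. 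Some such compensation mechanism is unavoidable, and it is the one ingredient missing from your argument.
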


\begin{proof}
Let us prove that any invertible $\she_{P^*M}^e$-module $\shp$
is locally free of rank one as right $\she_{P^*M}$-module.

Since this is a local problem, we may restrict to a contractible open subset $U\subset P^*M$, so that $\opb\gamma(U) \simeq U\times\C^\times$. The $\she_{\opb\gamma(U)}^e$-module $\opb \gamma \shp$ being invertible, by Theorem~\ref{thm:Eeloc} one gets
\[
\shp \isoto \oim\gamma\opb\gamma\shp \simeq \oim\gamma(L\tens[\C] \she_{\opb\gamma(U)}^{[\lambda]})
\]
for some $[\lambda]\in\C/\Z$ and some local system of rank one $L$ on $\opb\gamma(U)$ with monodromy $e^{-2 \pi i \lambda}$ on $\C^\times$. 

By restricting to $U'\subset U$, we may assume that there exists an invertible operator $D$ of order 1. This defines an isomorphism of right $\she_{U'}$-modules
$$
\she_{U'} \isoto \oim\gamma(L\tens[\C]
\she_{\opb\gamma(U')}^{[\lambda]})  \quad
Q \mapsto  D^\lambda Q.
$$
\end{proof}
Note that, given a local system of rank one $L$ and $[\lambda]\in\C/\Z$, one has 
$\oim \gamma (L \tens[\C] \she_\dTM^{[\lambda]})\neq 0$ if and only if the monodromy of $L$ 
along the fiber of $\gamma$ is given by $e^{-2 \pi i\lambda}$.
In particular, $\oim \gamma \she_{\dTM}^{[\lambda]}= 0$ for any $[\lambda]\neq 0$.

\section{Microdifferential algebroids}\label{se:results}

Here we state and prove our results on classification of
$\she$-algebroids on a contact manifold.

\subsection{Contact manifolds}
Let $X$ be a complex manifold of odd dimension, say $2n-1$. 
Denote by $\sho_X$ the sheaf of
holomorphic functions and by $\Omega^1_X$ the sheaf of holomorphic $1$-forms.
A structure of
(complex) contact manifold on $X$ is the assignment of a holomorphic principal
$\C^\times$-bundle $\gamma\colon Y\to X$, called symplectification, and of a
holomorphic one-form $\alpha \in \sect(Y;\Omega^1_Y)$, called contact form,
such that $\omega = d \alpha$ is symplectic (i.e.~$\omega^n$ vanishes nowhere)
and
$i_\theta \alpha = 0$,
$L_\theta \alpha = \alpha$. 
Here, $\theta$ denotes the infinitesimal generator of the action of
$\C^\times$ on $Y$, $i_\theta$ the contraction and $L_\theta$ the Lie
derivative. 
One may consider $\alpha$ as a global section of
$\Omega^1_X\tens[\sho_X]\sho_X(1)$, where $\sho_X(1)$ denotes the dual
of the sheaf of sections of the line bundle $\C\times_{\C^\times}Y$.

Let $M$ be a complex manifold of dimension $n$.
Then $P^*M$ has a natural contact structure given by the Liouville one-form on
$\dTM$ and by the projection $\gamma\colon \dTM \to P^*M$.
By Darboux theorem, $P^*M$ is a local model for a contact manifold $X$, meaning that
there are an open cover $\{U_i\}_{i\in I}$ of $X$ and
contact embeddings (i.e.~embeddings preserving the contact forms) $j_i\colon
U_i \hookrightarrow P^*M$ for any $i\in I$.

A fundamental result by~\cite{S-K-K} asserts 
that contact transformations (i.e.~biholomorphisms
preserving the contact forms) can be locally quantized. This means the
following. Let $N$ be another complex manifold of dimension $n$, $U\subset
P^*M$ and $V\subset P^*N$ open subsets and $\chi\colon U\to V$ a contact
transformation. Then any $x\in U$ has an open neighborhood $U'$ such that
there is a $\C$-algebra isomorphism $\opb \chi (\she_{P^*N}|_{\chi (U')}) \isoto
\she_{P^*M}|_{U'}$.

\begin{definition}\label{def:mical}
An $\she$-algebra on a contact manifold $X$ is a sheaf
$\sha$ of $\C$-algebras such that there are an open cover $\{U_i\}_{i\in I}$
of $X$, contact embeddings $j_i\colon U_i \hookrightarrow P^*M$ and
$\C$-algebra isomorphisms $\sha|_{U_i} \simeq \opb j_i\she_{P^*M}$ for any 
$i\in I$.
\end{definition}

Given an $\she$-algebra $\sha$, the $\C$-algebra $\opb \gamma \sha$ on $Y$
satisfies $\opb \gamma \sha|_{\opb \gamma(U_i)} \simeq \opb {\tilde j_i}\she_{\dTM}$ for 
$\tilde j_i\colon \opb \gamma(U_i) \to \dTM$ a homogeneous symplectic transformation lifting 
$j_i\colon U_i \hookrightarrow P^*M$. 
Note that Proposition~\ref{pro:fadp} implies that the invertible
$\opb \gamma \sha^e$-module $(\opb \gamma \sha)^{[\lambda]}$ is well-defined for any
$[\lambda]\in \C/\Z$.

To quantize $X$ in the strict sense means to endow it with an
$\she$-algebra (see~\cite{Bou99,Pol15}). This might not be possible in
general. However, as we now recall, Kashiwara~\cite{Kas96} proved that 
$X$ is endowed with a canonical $\she$-algebroid.

\subsection{Microdifferential algebroids}

\begin{definition}\label{def:microalg}
\begin{itemize}
\item[(i)] 
An $\she$-algebroid on $X$ is a $\C$-algebroid $\stka$
such that for every open subset $U\subset X$ and any object
$\obja\in\stka(U)$, the $\C$-algebra $\shEnd[\stka](\obja)$ is an
$\she$-algebra on $U$.
\item[(ii)] 
A stack of twisted $\she$-modules on $X$ is a $\C$-stack $\stkm$ such that
there are an open cover $\{U_i\}_{i\in I}$ of $X$, $\she$-algebras
$\she_i$ on $U_i$ and equivalences $\stkm|_{U_i} \equi[\C] \stkMod(\she_i)$
for any $i\in I$.
\end{itemize}
\end{definition}

Note that a $\C$-stack $\stka$ is an $\she$-algebroid if and
only if there are an open cover $\{U_i\}_{i\in I}$ of $X$, $\she$-algebras
$\she_i$ on $U_i$ and equivalences $\stka|_{U_i}\equi[\C]\astk{\she_i}$ for any $i\in I$. 
In particular, $\stkMod(\stka)$ is a stack of twisted $\she$-modules.

Kashiwara's construction of the canonical $\she$-algebroid on $X$ was performed by patching data as explained in
Appendix~\ref{se:acocy} (see~\cite{DK11} for a more intrinsic
construction). 
More precisely, in~\cite{Kas96} he proved the
existence of an open cover $\shu = \{U_i\}_{i\in I}$ of $X$, of
$\she$-algebras $\she_i$ on $U_i$, of
isomorphisms of $\C$-algebras $f_{ij}\colon \she_j\to\she_i$
on $U_{ij}$ and of sections
$a_{ijk}\in\sect(U_{ijk};\she_i(0)^\times)$, satisfying the cocycle condition
\begin{equation}
\label{eq:KasCoc}
\begin{cases}
f_{ij}f_{jk} = \ad(a_{ijk})f_{ik},\\ 
a_{ijk} a_{ikl} = f_{ij}(a_{jkl}) a_{ijl}.
\end{cases}
\end{equation}
By Proposition~\ref{pr:glue}~(i), this implies

\begin{theorem}[\cite{Kas96}]
Any complex contact manifold $X$ is endowed with a canonical
$\she$-algebroid $\stke_X$.
\end{theorem}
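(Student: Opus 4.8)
The plan is to realize $\stke_X$ by gluing, along an open cover of $X$, local copies of the algebra of microdifferential operators, using the cocycle description of algebroids recalled in Appendix~\ref{se:acocy}. The substantive input is Kashiwara's construction in~\cite{Kas96} of the required patching data; once that data is available, assembling the algebroid is a direct application of Proposition~\ref{pr:glue}~(i), and the only remaining points are to observe that the result is an $\she$-algebroid and that it does not depend on the choices.

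First I would use Darboux theorem to choose an open cover $\shu=\{U_i\}_{i\in I}$ of $X$ with contact embeddings $j_i\colon U_i\hookrightarrow P^*M$, and set $\she_i=\opb{j_i}\she_{P^*M}$, which is an $\she$-algebra on $U_i$ in the sense of Definition~\ref{def:mical}. On a double overlap the two charts are identified by a contact transformation between open subsets of $P^*M$, which by the quantization theorem of~\cite{S-K-K} is locally quantized; after passing to a sufficiently fine cover, so that each $U_{ij}$ carries a quantization of the contact transformation relating the two charts, one obtains $\C$-algebra isomorphisms $f_{ij}\colon\she_j\to\she_i$ on $U_{ij}$, which are automatically filtered and symbol preserving by Lemma~\ref{lem:ffilt}. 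On a triple overlap, $f_{ij}f_{jk}f_{ik}^{-1}$ is then a filtered, symbol preserving automorphism of $\she_i$, hence locally of the form $\ad(P)$ for an invertible microdifferential operator $P$ of some order $\lambda\in\C$ by Proposition~\ref{pro:fadp}. The main obstacle --- and the content of Kashiwara's theorem --- is to normalize the choice of the $f_{ij}$ so that this inner automorphism is realized by a \emph{zero-order} unit $a_{ijk}\in\sect(U_{ijk};\she_i(0)^\times)$ and so that the resulting family satisfies the $2$-cocycle identity, i.e.~so that $(\she_i,f_{ij},a_{ijk})$ fulfills~\eqref{eq:KasCoc}. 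It is precisely here that the contact (rather than merely symplectic or complex) structure enters, through the fine analysis of the automorphisms of $\she_\dTM$ in Lemma~\ref{lem:ffilt} and Proposition~\ref{pro:fadp}.

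Granting this cocycle data, Proposition~\ref{pr:glue}~(i) yields a $\C$-algebroid $\stke_X$ together with equivalences $\stke_X|_{U_i}\equi[\C]\astk{\she_i}$ realizing it; since every $\shEnd[\stke_X](\obja)$ is then locally isomorphic to some $\she_i$, the stack $\stke_X$ is an $\she$-algebroid by Definition~\ref{def:microalg}. For canonicity, I would compare the outputs of two choices of cover and quantization data by passing to a common refinement: the two cocycles then differ only by inner automorphisms by zero-order units, that is, by a coboundary in the sense of Appendix~\ref{se:acocy}, so the two algebroids are $\C$-equivalent. That $\stke_X$ is moreover uniquely characterized among $\she$-algebroids by carrying an anti-involution with trivial associated graded is the theorem of~\cite{Pol07}.
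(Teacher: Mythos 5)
Your proposal follows the same route as the paper: the substantive input (Kashiwara's normalized cocycle data $\triplet{\she_i}{f_{ij}}{a_{ijk}}{}$ satisfying~\eqref{eq:KasCoc}) is taken from~\cite{Kas96}, and the algebroid is then assembled by Proposition~\ref{pr:glue}~(i). Your additional remarks on where the $f_{ij}$ come from and on independence of choices are consistent with, though not spelled out in, the paper's treatment.
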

It follows that a $\C$-stack on $X$ is an
$\she$-algebroid (resp.~a stack of twisted $\she$-modules) if and only if it 
is locally $\C$-equivalent to $\stke_X$ (resp.~to $\stkMod(\stke_X)$). In particular, if 
$X=P^*M$ then  $\stke_{P^*M}$ is $\C$-equivalent to $\she_{P^*M}$, and $\she$-algebroids 
are $\C$-twisted forms of $\she_{P^*M}$.

Recall that an algebroid is Picard good if and only if  so are the algebras 
that locally represent it. Hence, by Theorem~\ref{thm:localEPicardGood} one gets 
that any $\she$-algebroid, and in particular $\stke_X$, is Picard good. From Proposition~\ref{pro:picgood}, 
we thus deduce the following

\begin{theorem}\label{thm:Emoritatrivial}
\begin{itemize}
\item[(i)] 
Two $\she$-algebroids are $\C$-equivalent if and only
if they are Morita equivalent.
\item[(ii)] 
Any stack of twisted $\she$-modules is $\C$-equivalent to the stack of modules
over an $\she$-algebroid.
\end{itemize}
\end{theorem}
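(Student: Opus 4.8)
The plan is to deduce both statements directly from Proposition~\ref{pro:picgood}, once we know that $\she$-algebroids are Picard good. That fact is already in hand: by Theorem~\ref{thm:localEPicardGood} the $\C$-algebra $\she_{P^*M}$ is Picard good, and since being Picard good is a local condition, the same holds for any $\she$-algebra $\she_i$ appearing in a local presentation of an $\she$-algebroid; as recalled just before the statement, an algebroid is Picard good precisely when so are the algebras that locally represent it. Hence every $\she$-algebroid, and in particular $\stke_X$, is a Picard good $\C$-stack.

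For part (i), let $\stka$ and $\stkb$ be two $\she$-algebroids on $X$. Both are locally $\C$-equivalent to $\stke_X$, hence locally $\C$-equivalent to each other. Applying Proposition~\ref{pro:picgood}~(i) with $\stkc$ the Picard good stack $\stke_X$ (or, more precisely, working locally where $\stka\equi[\C]\stkb\equi[\C]\stke_X$) shows that $\stka$ and $\stkb$ are Morita $\C$-equivalent if and only if they are $\C$-equivalent. Conversely, a $\C$-equivalence $\stka\equi[\C]\stkb$ always induces a $\C$-equivalence $\stkMod(\stka)\equi[\C]\stkMod(\stkb)$ via the $2$-functor $\stkMod(\dummy)$, so the nontrivial implication is exactly the one supplied by Picard goodness.

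For part (ii), let $\stkm$ be a stack of twisted $\she$-modules on $X$, so that $\stkm$ is locally $\C$-equivalent to $\stkMod(\she_i)\equi[\C]\stkMod(\astk{\she_i})\equi[\C]\stkMod(\stke_X|_{U_i})$. Thus $\stkm$ is locally $\C$-equivalent to $\stkMod(\stke_X)$ with $\stke_X$ Picard good, and Proposition~\ref{pro:picgood}~(ii) provides a $\C$-stack $\stkd$, locally $\C$-equivalent to $\stke_X$, with $\stkm\equi[\C]\stkMod(\stkd)$. By the remark preceding the statement, a $\C$-stack locally $\C$-equivalent to $\stke_X$ is precisely an $\she$-algebroid, so $\stkd$ is an $\she$-algebroid and $\stkm\equi[\C]\stkMod(\stkd)$, as claimed.

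The genuinely substantive input here is not in the present proof but upstream, in Theorem~\ref{thm:localEPicardGood} (hence ultimately in Kashiwara's Theorem~\ref{thm:Eeloc} classifying invertible $\she_\dTM^e$-modules) and in the general Morita machinery of Proposition~\ref{pro:picgood}. Once those are granted, the only point requiring a moment's care is matching up the local models: verifying that "locally $\C$-equivalent to $\stke_X$" coincides with the Definition~\ref{def:microalg} of $\she$-algebroid, and that "locally $\C$-equivalent to $\stkMod(\stke_X)$" coincides with being a stack of twisted $\she$-modules, using that $\stke_{P^*M}\equi[\C]\she_{P^*M}$ and that $\stkMod(\astk{\she_i})\equi[\C]\stkMod(\she_i)$. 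This bookkeeping is the main obstacle in the sense of being the only step with any content, but it is routine given the identifications already recorded in Section~\ref{se:alg} and Section~\ref{se:results}.
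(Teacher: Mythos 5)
Your proposal is correct and follows essentially the same route as the paper: establish that every $\she$-algebroid is Picard good via Theorem~\ref{thm:localEPicardGood} and the locality of that condition, then invoke Proposition~\ref{pro:picgood}~(i) and~(ii) (applying (i) with $\stkc$ one of the two given algebroids rather than $\stke_X$ itself). The paper states this deduction in exactly this way, leaving the same bookkeeping about local models implicit.
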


To classify $\she$-algebroids, we thus need to compute the first cohomology
with value in the stack of 2-groups $\stkAut[\C](\stke_X)\equi[]\stkInv(\stke_X^e)_\op$,
where we set $\stke_X^e = \stke_X^\op\tens[\C]\stke_X$.

\subsection{Geometry of $\gamma\colon Y\to X$}\label{se:gamma}

\begin{lemma}\label{lem:roimgamma}
For $M$ an abelian group, there is a distinguished triangle
$$
M_X \to \roim\gamma M_Y \to M_X[-1] \to[+1]
$$
\end{lemma}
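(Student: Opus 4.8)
The plan is to compute the derived pushforward $\roim\gamma M_Y$ for the principal $\C^\times$-bundle $\gamma\colon Y\to X$ by reducing to the fibers. Since $M_Y$ is the constant sheaf and $\gamma$ is a locally trivial fibration with fiber $\C^\times$, the sheaves $R^q\oim\gamma M_Y$ are locally constant on $X$ with stalk $H^q(\C^\times;M)$. As $\C^\times$ is homotopy equivalent to a circle, we have $H^0(\C^\times;M)\simeq M$ and $H^1(\C^\times;M)\simeq M$, while $H^q(\C^\times;M)=0$ for $q\geq 2$. So the first step is to identify $R^0\oim\gamma M_Y\simeq M_X$ (via the adjunction unit, using connectedness of the fibers) and to show $R^1\oim\gamma M_Y$ is again the constant sheaf $M_X$ rather than merely a local system.

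The key point for the second step is that $Y$ carries a canonical orientation of its fibers: $\gamma\colon Y\to X$ is a \emph{principal} $\C^\times$-bundle, so the fibers are canonically identified with $\C^\times$ (up to the action, which is orientation-preserving), and hence the local system $R^1\oim\gamma M_Y$ has trivial monodromy. Concretely, I would use that $\C^\times$ has a canonical generator of $H_1$ (the positively oriented loop, available because $\C^\times$ sits in $\C$), and that the transition functions of the bundle act on this generator trivially since multiplication by a nonzero complex number is orientation-preserving on $\C^\times$. This trivializes $R^1\oim\gamma M_Y\simeq M_X$ canonically.

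Once both cohomology sheaves are identified with $M_X$, the object $\roim\gamma M_Y$ lies in degrees $[0,1]$ with $H^0\simeq M_X$ and $H^1\simeq M_X$, giving the truncation triangle
\[
M_X \to \roim\gamma M_Y \to M_X[-1] \to[+1].
\]
It remains to check this triangle is the asserted one, i.e.\ that the connecting map $M_X[-1]\to M_X[-1][1]=M_X$ is an actual map in the derived category and that the first map is the adjunction unit; both are immediate since $\tau_{\leq 0}\roim\gamma M_Y\simeq M_X$ via adjunction and the triangle is the canonical truncation triangle $\tau_{\leq 0}\to\mathrm{id}\to\tau_{\geq 1}$ applied to $\roim\gamma M_Y$, together with the identification $\tau_{\geq 1}\roim\gamma M_Y\simeq H^1\oim\gamma M_Y[-1]\simeq M_X[-1]$.

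The main obstacle is the orientation argument showing $R^1\oim\gamma M_Y$ is \emph{canonically} constant, not just locally constant: this is where the hypothesis that $\gamma$ is a principal $\C^\times$-bundle (and not an arbitrary circle or punctured-disk bundle) is essential, and care is needed to pin down the canonical isomorphism so that the resulting triangle is natural in $M$ and compatible with the later cohomological computations. Everything else is the standard Leray/truncation formalism.
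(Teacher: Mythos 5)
Your proof is correct, and it reaches the triangle by a more elementary route than the paper. You compute $R^q\oim\gamma M_Y$ fiberwise as a local system with stalk $H^q(\C^\times;M)$ and trivialize $R^1\oim\gamma M_Y$ by observing that the structure group acts trivially on $H^1$ of the fiber (multiplication by $g\in\C^\times$ is homotopic to the identity, so the canonical generator of $H_1(\C^\times;\Z)$ is preserved), then assemble the canonical truncation triangle. The paper instead factors $\gamma$ as $Y\to SY=Y/\R_{>0}\to X$ through the circle bundle, notes $\roim\gamma M_Y\simeq\reim q M_{SY}$ since $q$ is proper, and obtains the degree-one identification from Poincar\'e--Verdier duality: the infinitesimal generator $\theta$ of the $\C^\times$-action trivializes the relative orientation sheaf, so $\epb q M_X\simeq M_{SY}[1]$ and the map $\roim\gamma M_Y\to M_X[-1]$ is the trace morphism $\reim q\epb q M_X[-1]\to M_X[-1]$. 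The two arguments rest on the same geometric input (the canonical orientation of the fibers); yours avoids duality altogether, while the paper's has the advantage of producing the second map of the triangle intrinsically as an adjunction counit rather than via a cocycle/stalk analysis, which makes its naturality in $M$ and its compatibility with the subsequent identifications of $\mu_1$, $\delta$, $\gamma^\#$, $\mu_2$ immediate. Your version yields the same triangle, since evaluation against the canonical fundamental class of the fiber is exactly what the trace map computes.
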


\begin{proof}
As the complex $\roim\gamma M_Y$ is concentrated in degrees $[0,1]$, by
truncation it is enough to prove the isomorphisms
\begin{equation}\label{eq:roimgammatemp}
H^i\roim\gamma M_Y \simeq M_X,\quad\text{for } i=0,1.
\end{equation}
For $i=0$ it is induced by the adjunction morphism $M_X \to
\roim\gamma M_Y$.

Set $SY=Y/\R_{>0}$ and consider $\gamma$ as the composite of $p\colon Y \to
SY$ and $q\colon SY\to X$, which are principal bundle for the groups $\R_{>0}$
and $S^1$, respectively. Note that $\roim p M_Y \simeq M_{SY}$, so that
$\roim\gamma M_Y \simeq \roim q M_{SY} \simeq \reim q M_{SY}$. The
infinitesimal generator $\theta$ of the action of $\C^\times$ on $Y$ induces a
trivialization of the relative orientation sheaf $or_{SY/X}$. Hence $\epb q
M_X \simeq M_{SY}[1]$. Then the isomorphism \eqref{eq:roimgammatemp} for $i=1$
is induced by the adjunction morphism $\reim q M_{SY} \simeq \reim q \epb q
M_X[-1] \to M_X[-1]$.
\end{proof}

Let $M=\C^\times$. The induced long exact cohomology sequence gives:
\[
H^1(Y;\C^\times) \to[\mu_1] H^0(X;\C^\times) \to[\delta]
H^2(X;\C^\times) \to[\gamma^\#] H^2(Y;\C^\times) \to[\mu_2] H^1(X;\C^\times) .
\]
We can represent elements of $H^0$ by locally constant $\C^\times$-valued functions, 
elements of $H^1$ by isomorphism classes of local systems of rank one, and
elements of $H^2$ by $\C$-equivalence classes of invertible $\C$-algebroids
(see Lemma~\ref{lem:tors}).
Let us describe the above sequence in these terms (see also~\cite[Chapitre V \S\S 3.1,3.2]{Gir71}),
were we use the notation $[\,\cdot\,]$ both for isomorphism and $\C$-equivalence classes.

For $L$ a local systems of rank one on $Y$, $\mu_1([L])$ is the locally constant function on $X$ giving the monodromy of $L$ along the fibers of $\gamma$. 

Recall that $\astk{\C_Y}$ denotes the stack of local systems of rank one on $Y$.

\begin{lemma}\label{lem:pi_0} 
\begin{itemize}
\item[(i)]
There is a group isomorphism $\pi_0(\oim\gamma\astk{\C_Y})\simeq\C_X^\times$, where the group structure on the left-hand side is induced by $\tens_\C$.
\item[(ii)]
If $\stkd$ is a $\C$-stack on $Y$, then $\pi_0(\oim \gamma \stkd)$ is a $\C_X^\times$-sheaf (i.e., it is endowed with a $\C_X^\times$-action).
\item[(iii)]
If $\stkt$ is an invertible $\C_Y$-algebroid, then $\pi_0(\oim \gamma \stkt)$ is a $\C_X^\times$-torsor.
\end{itemize}
\end{lemma}

\begin{proof}
(i) Recall that $\C^\times_Y[1]$ denotes the stack of $\C_Y^\times$-torsors. The functor
\[
\C_Y^\times[1] \to\astk{\C_Y},
\quad \shp \mapsto \C \times_{\C^\times} \shp
\]
defines a group isomorphism $\pi_0(\oim\gamma\astk{\C_Y})\simeq \pi_0(\oim\gamma(\C^\times_Y[1]))$.
By~\eqref{eq:pi-roim}, the latter is isomorphic to $R^1\oim \gamma \C_Y^\times$, hence to $\C_X^\times$ by Lemma~\ref{lem:roimgamma}.

\smallskip\noindent
(ii) By using~\eqref{eq:dir-im}, one gets a $\C$-functor 
$$
\oim\gamma\astk{\C_Y}\tens[\C] \oim\gamma\stkd \to \oim\gamma\stkd, \quad (L,\delta) \mapsto L\tens[\C]\delta.
$$
This defines an action of  $\pi_0(\oim\gamma\astk{\C_Y})\simeq\C_X^\times$ on $\pi_0(\oim\gamma\stkd)$.

\smallskip\noindent
(iii) Since $R^2\oim \gamma \C_Y^\times = 0$, the stack $\oim\gamma\stkt$ is locally $\C$-equivalent to $\oim\gamma\astk{\C_Y}$. Hence $\pi_0(\oim\gamma\stkt)$ 
is a $\C_X^\times$-torsor by (i) and (ii).
\end{proof}

\begin{notation}\label{nt:subAlg}
Let $\stkc$ be a $\C$-stack on $X$. For $s$ a global section of $\pi_0(\stkc)$, we denote by $\stkc^s$ the full substack of $\stkc$ whose objects $c$ satisfy $[c] = s$ in $\pi_0(\stkc)$.
\end{notation}
Note that $\stkc^s$ is a $\C$-algebroid, since $\pi_0(\stkc^s)=\{s\}_X$. It is locally $\C$-equivalent to the algebra $\shEnd[\stkc](c)$ for any local representative $c$ of $s$.

\medskip

For $m \in H^0(X;\C^\times) \simeq \sect(X;\pi_0(\oim\gamma\astk{\C_Y}))$, one has
$$\delta(m) = [(\oim\gamma\astk{\C_Y})^m].$$
Here, $(\oim\gamma\astk{\C_Y})^m$ is identified with the $\C_X$-algebroid of local systems $L\in\oim\gamma\astk{\C_Y}$ with $\mu_1([L]) = m$. In particular, for $m=1$ the $\C_X$-algebroid $(\oim\gamma\astk{\C_Y})^1$ is equivalent to 
$\astk{\C_X}$ via the adjunction functor $\astk{\C_X} \to \oim \gamma \astk{\C_Y}$. Moreover, one has a decomposition $\oim\gamma\astk{\C_Y} \equi[\C]
\coprod\nolimits_{m\in\C_X^\times} (\oim\gamma\astk{\C_Y})^m$.

For $\stks$ an invertible $\C_X$-algebroid, $\gamma^\#([\stks]) =[\opb\gamma\stks]$. 

\begin{proposition}
\label{pro:mu2}
For $\stkt$ an invertible $\C_Y$-algebroid, $\mu_2([\stkt])$ is the class of the local system of rank one 
$\C \times_{\C^\times} \pi_0(\oim\gamma\stkt)$.
\end{proposition}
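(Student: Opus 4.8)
The plan is to unwind the definition of $\mu_2$ from the long exact sequence and identify it with the boundary map in non commutative cohomology, then compute it explicitly via the cocycle describing $\stkt$. Recall that the long exact sequence arises from the distinguished triangle $M_X \to \roim\gamma M_Y \to M_X[-1] \to[+1]$ of Lemma~\ref{lem:roimgamma} with $M = \C^\times$; via~\eqref{eq:h1commcross} and~\eqref{eq:roim}, this corresponds at the level of stacks of $2$-groups to a short exact sequence whose outer terms are (up to shift) $\C_X^\times[1]$ and $\C_X^\times$, and whose middle term is (a truncation of) $\oim\gamma(\C_Y^\times[1])$. Invertible $\C_Y$-algebroids are classified by $H^1(Y;\C_Y^\times[1]) = H^2(Y;\C^\times_Y)$ by Lemma~\ref{lem:tors}, and the map $\mu_2$ is the induced map $H^2(Y;\C_Y^\times) \to H^1(X;\C_Y^\times)$ on first cohomology of these stacks of $2$-groups.

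First I would choose a good cover $\{U_i\}$ of $X$ over which $\gamma$ trivializes, $\opb\gamma(U_i) \simeq U_i \times \C^\times$, and a cocycle $\pair{\functf_{ij}}{\transfa_{ijk}}{}$ with values in $\C_Y^\times[1]$ representing $\stkt$: so $\functf_{ij}$ is a $\C_Y^\times$-torsor on $\opb\gamma(U_{ij})$ and $\transfa_{ijk}\colon \functf_{ik}\isoto\functf_{ij}\tens\functf_{jk}$. Pushing forward, $\oim\gamma\stkt$ is described on $X$ by the data $\oim\gamma\functf_{ij}$ together with $\oim\gamma\transfa_{ijk}$; but $\oim\gamma$ of a $\C_Y^\times$-torsor on $\opb\gamma(U_{ij})$ need no longer be (locally) trivial — its obstruction to triviality along the fibre is exactly its monodromy, i.e.~the element of $\pi_0$ — and it is precisely this failure that the connecting map $\mu_2$ records. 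Concretely, the sheaf $\pi_0(\oim\gamma\stkt)$ is patched from the sheaves $\pi_0(\oim\gamma\functf_{ij})$, each of which (being $\pi_0$ of the pushforward of a torsor) is by~\eqref{eq:pi-roim} and Lemma~\ref{lem:roimgamma} a $\C_X^\times$-torsor on $U_{ij}$; the cocycle $\oim\gamma\transfa_{ijk}$ forces these to satisfy a $\C_X^\times$-torsor cocycle condition on triple overlaps, hence they glue to a well-defined $\C_X^\times$-torsor, equivalently a local system of rank one on $X$. This is the claimed class.

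The remaining point is to check that the local system so obtained agrees with $\mu_2([\stkt])$ as defined by the long exact sequence — i.e.~that the identification of $\pi_0(\oim\gamma(\C_Y^\times[1]))$ with (a subsheaf of) the middle term of the triangle is compatible with the connecting homomorphism. Here I would use~\eqref{eq:pi-roim}, which gives $\pi_i(\oim\gamma(\C_Y^\times[1])) \simeq R^{1-i}\oim\gamma\C_Y^\times$, together with the explicit description of $R^1\oim\gamma\C_Y^\times \simeq \C_X^\times$ coming from the trivialization of the relative orientation sheaf by $\theta$ (as in the proof of Lemma~\ref{lem:roimgamma}). The boundary map in the long exact sequence of the triangle, in degree $1$, is the map $H^1$ of the map of $2$-groups corresponding to $\roim\gamma M_Y \to M_X[-1]$, and by the above this is exactly ``take $\pi_0$ and read off the monodromy'', which is the functor $\stkt \mapsto \C\times_{\C^\times}\pi_0(\oim\gamma\stkt)$ used in Lemma~\ref{lem:pi_0}(i) to identify $\pi_0(\oim\gamma\astk{\C_Y})$ with $\C_X^\times$. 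Matching the two descriptions is essentially a diagram chase in the relevant crossed modules.

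The main obstacle I anticipate is purely bookkeeping: making the passage from the triangle of abelian sheaves to the short exact sequence of stacks of $2$-groups fully precise, and tracking through it how the connecting homomorphism on $H^1$ of $2$-groups translates into the concrete ``push forward the torsor cocycle and take fibrewise monodromy'' recipe — in particular verifying the torsor cocycle condition for $\pi_0(\oim\gamma\functf_{ij})$ and the independence of all choices. None of this is deep, but it requires care with the cocyclic conventions of Appendix~\ref{se:cocy}; once set up, the identification of $\mu_2([\stkt])$ with $[\C\times_{\C^\times}\pi_0(\oim\gamma\stkt)]$ is immediate. One could alternatively argue by naturality: the assignment $\stkt\mapsto\pi_0(\oim\gamma\stkt)$ is functorial, it kills the image of $\gamma^\#$ (since $\opb\gamma\stks$ has trivial fibrewise monodromy), and on a suitable generator of $H^2(Y;\C_Y^\times)$ one computes both sides directly, which pins down the map by the exactness of the sequence; I would mention this as a sanity check but carry out the cocycle computation as the actual proof.
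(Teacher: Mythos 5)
Your proposal is correct and follows essentially the same route as the paper: one describes $\stkt$ by a cocycle of rank-one local systems (equivalently $\C^\times$-torsors) $M_{ji}$ on the $\opb\gamma(U_{ij})$, pushes forward, and observes that the induced transition data on $\pi_0(\oim\gamma\stkt)\simeq\C_X^\times|_{U_i}$ are multiplication by the fibrewise monodromies $\mu_1(M_{ji})$, which is precisely the Cech representative of $\mu_2([\stkt])$ produced by the connecting map of the triangle in Lemma~\ref{lem:roimgamma}. Two small points of care: the phrase ``patched from the sheaves $\pi_0(\oim\gamma\functf_{ij})$'' should be read as ``the class of $\oim\gamma\functf_{ij}$ in $\pi_0(\oim\gamma(\C_Y^\times[1]))(U_{ij})\simeq\C_X^\times(U_{ij})$ gives the transition cocycle'', and the existence of a trivialization of $\stkt$ over each $\opb\gamma(U_i)$ (hence the fact that $\pi_0(\oim\gamma\stkt)$ is locally nonempty, i.e.\ a torsor) rests on $R^2\oim\gamma\C_Y^\times=0$, which is where the paper starts.
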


\begin{proof}
By Lemma~\ref{lem:pi_0}~(iii), $\pi_0(\oim\gamma\stkt)$ 
is a $\C_X^\times$-torsor.
It follows that $\C \times_{\C^\times} \pi_0(\oim\gamma\stkt)$ is a local system of rank one on $X$.

Choose an open covering $\{U_i\}$ of $X$ in such a way that $\stkt$ is described, by means of  Proposition~\ref{pr:patch} (i), by the data 
$(\astk \C_{V_i}, (\cdot)\tens[\C]M_{ji}, \transfa_{ijk})$, where  $V_i=\opb \gamma (U_i)$ and 
$M_{ji}$ are local systems of rank one on $V_{ij}$. Then $\C \times_{\C^\times} \pi_0(\oim\gamma\stkt)$ 
is represented by the 1-cocycle $\{\mu_1([M_{ji}])\}$ with values in $\C^\times$, which 
gives a Cech representative of the class $\mu_2([\stkt])$.
\end{proof}

\subsection{Classification results}

Set
\[
\stke_Y = \opb\gamma\stke_X, \quad \stke_Y^e = \stke_Y^\op\tens[\C]\stke_Y \equi[] \opb\gamma(\stke_X^e).
\]
Note that $\stke_Y$ can be described by patching the $\C$-algebras $\opb\gamma\she_i$ along the pull back 
on $Y$ of the data~\eqref{eq:KasCoc}.

For $[\lambda]\in\C/\Z$, the algebroid version of the invertible bimodule $\she_\dTM^{[\lambda]}$ is the $\stke_Y^e$-module
$$
\stke_Y^{[\lambda]} \in \stkFun[\C_Y](\stke_Y,\stke_Y) \subset \stkMod( \stke_Y^e)
$$
locally defined by $(\cdot)\tens[\she_\dTM]\she_\dTM^{[\lambda]}$
(cp. Proposition~\ref{pro:AutInv}).


Consider the direct image functor, obtained by using~\eqref{eq:2adj},
\[
\oim\gamma \colon \oim \gamma\stkMod(\stke_Y^e) \to \stkMod(\stke_X^e)
\]
and recall the morphism
$H^1(Y;\C^\times) \to[\mu_1] H^0(X;\C^\times)\simeq  H^0(X;\C/\Z)$ from \S\ref{se:gamma}.

\begin{theorem}\label{thm:gammainv}
The functor
\begin{equation}
\label{eq:E-inv}
\oim\gamma\stkInv(\C_Y) \to \stkInv(\stke_X^e),
\qquad L \mapsto \oim\gamma(L \tens[\C] \stke_Y^{\mu_1(L^*)})
\end{equation}is an equivalence of stacks of 2-groups. 
\end{theorem}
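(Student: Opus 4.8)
The plan is to factor \eqref{eq:E-inv} as $\oim\gamma$ applied to the functor $L\mapsto L\tens[\C]\stke_Y^{\mu_1(L^*)}$, and then to identify $\oim\gamma$ and $\opb\gamma$ as mutually quasi-inverse equivalences between $\stkInv(\stke_X^e)$ and the full substack of $\stkInv(\stke_Y^e)$ consisting of invertible modules constant along the fibres of $\gamma$.

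The first step is to lift Lemma~\ref{lem:Elm} and Theorem~\ref{thm:Eeloc} from the sheaf $\she_\dTM$ to the algebroid $\stke_Y=\opb\gamma\stke_X$. The key point is that the $\C/\Z$-grading is intrinsic: a $\C$-algebra automorphism $f$ of $\she_\dTM$ acts on $\she_\dTM^e$-modules by transport along $f^\op\tens[\C]f$, and, writing $f=\ad(P)$ with $P\in\she_\dTM(\nu)$ of nonvanishing symbol by Proposition~\ref{pro:fadp}, the assignment $m\mapsto P^{-1}mP$ is an isomorphism of $\she_\dTM^e$-modules from the transport of $\she_\dTM^{[\lambda]}$ along $f^\op\tens[\C]f$ back onto $\she_\dTM^{[\lambda]}$. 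Hence the local models $\she_\dTM^{[\lambda]}$ glue to the globally defined $\stke_Y^{[\lambda]}$, the isomorphisms $\stke_Y^{[\lambda]}\tens[\stke_Y]\stke_Y^{[\mu]}\simeq\stke_Y^{[\lambda+\mu]}$ are well defined, and $\shHom[\stke_Y^e](\stke_Y^{[\lambda]},\stke_Y^{[\mu]})$ equals $\C_Y$ for $[\lambda]=[\mu]$ and $0$ otherwise; moreover every invertible $\stke_Y^e$-module $\shn$ acquires an intrinsic locally constant $\C/\Z$-valued grading $[\lambda]$ by gluing, and $L:=\shHom[\stke_Y^e](\stke_Y^{[\lambda]},\shn)$ is then a rank one local system with $L\tens[\C]\stke_Y^{[\lambda]}\isoto\shn$. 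It follows that
\[
\stkInv(\C_Y)\times(\C/\Z)_Y[0]\equi\stkInv(\stke_Y^e),
\qquad
(L,[\lambda])\mapsto L\tens[\C]\stke_Y^{[\lambda]},
\]
is an equivalence of stacks of $2$-groups, where $(\C/\Z)_Y[0]$ is as in Example~\ref{exa:g01}.

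Now set $\Phi\colon\stkInv(\C_Y)\to\stkInv(\stke_Y^e)$, $L\mapsto L\tens[\C]\stke_Y^{\mu_1(L^*)}$; it is monoidal since $\mu_1$ is additive, and, read through the equivalence above, it is the functor $L\mapsto(L,\mu_1(L^*))$, hence fully faithful. Its essential image is the full substack $\stke_Y^\flat$ of invertible $\stke_Y^e$-modules $\shn$ with $\oim\gamma\shn\neq 0$: indeed, by the remark following Theorem~\ref{thm:localEPicardGood} (in its algebroid form), $\oim\gamma(L\tens[\C]\stke_Y^{[\lambda]})\neq 0$ exactly when the monodromy of $L$ along the fibres of $\gamma$ is $e^{-2\pi i\lambda}$, i.e.\ when $[\lambda]=\mu_1(L^*)$; equivalently $\stke_Y^\flat$ consists of the invertible $\stke_Y^e$-modules constant along the fibres of $\gamma$, so the adjunction counit $\opb\gamma\oim\gamma\shn\to\shn$ is an isomorphism for $\shn\in\stke_Y^\flat$, and, by the proof of Theorem~\ref{thm:localEPicardGood}, $\oim\gamma\shn$ is then an invertible $\stke_X^e$-module (locally the unit bimodule $\stke_X$).

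Since \eqref{eq:E-inv} factors as $\oim\gamma\stkInv(\C_Y)\to[\oim\gamma\Phi]\oim\gamma\stke_Y^\flat\to[\oim\gamma]\stkInv(\stke_X^e)$, with the first functor an equivalence because $\Phi$ is an equivalence onto $\stke_Y^\flat$, it remains to check that $\oim\gamma\colon\oim\gamma\stke_Y^\flat\to\stkInv(\stke_X^e)$ is an equivalence of stacks of $2$-groups. Here $\shp\mapsto\opb\gamma\shp$ is a quasi-inverse: $\opb\gamma\shp$ is invertible over $\stke_Y^e$ and lies in $\stke_Y^\flat$ because $\oim\gamma\opb\gamma\shp\simeq\shp\neq 0$, the unit $\shp\to\oim\gamma\opb\gamma\shp$ being an isomorphism ($\stke_X$ is Picard good by Theorem~\ref{thm:localEPicardGood}, so $\shp$ is locally the unit bimodule $\stke_X=\oim\gamma\opb\gamma\stke_X$); the unit and counit isomorphisms just recorded then show $\opb\gamma$ and $\oim\gamma$ are mutually quasi-inverse, and monoidality is immediate since $\opb\gamma$ commutes with $\tens$ and $\oim\gamma\opb\gamma\simeq\id$ on invertible $\stke_X^e$-modules. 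I expect the first step to be the main obstacle: establishing that the $\C/\Z$-grading of an invertible $\stke_Y^e$-module is globally defined and that $\stke_Y^{[\lambda]}$ really is invertible with the stated tensor behaviour is precisely where the interplay between the multivalued fractional powers of microdifferential operators and the automorphisms of Proposition~\ref{pro:fadp} must be handled; the rest is formal manipulation of the adjunction $\opb\gamma\dashv\oim\gamma$, the content being that tensoring by $\stke_Y^{\mu_1(L^*)}$ is exactly the twist that makes $L\tens[\C](\dummy)$ descend along $\gamma$.
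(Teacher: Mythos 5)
Your argument is correct in substance and rests on exactly the same ingredients as the paper's proof (Theorem~\ref{thm:Eeloc}, Lemma~\ref{lem:Elm}, Proposition~\ref{pro:fadp}, the proof of Theorem~\ref{thm:localEPicardGood}, and the monodromy criterion for $\oim\gamma(L\tens[\C]\she^{[\lambda]}_\dTM)\neq 0$); indeed your quasi-inverse $\shp\mapsto\shHom[\stke_Y^e](\stke_Y^{[\lambda]},\opb\gamma\shp)$ is literally the one the paper writes down, with $[\lambda]=\epsilon(\shp)$ the ``exponential'' of $\shp$. The packaging differs: you first establish a global classification $\stkInv(\C_Y)\times(\C/\Z)_Y[0]\equi[]\stkInv(\stke_Y^e)$ on $Y$ and then descend through the adjunction $\opb\gamma\dashv\oim\gamma$ restricted to the fibrewise-trivial part, whereas the paper checks invertibility and monoidality of \eqref{eq:E-inv} by purely local computations on $X$ and exhibits the quasi-inverse directly; your route makes the geometric content (``tensoring by $\stke_Y^{\mu_1(L^*)}$ is the twist that lets $L\tens[\C](\dummy)$ descend'') more visible, at the cost of some extra bookkeeping. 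Three small points to tidy up. First, the issue you single out as the main obstacle --- that $\stke_Y^{[\lambda]}$ is globally well defined --- is already resolved in the paper by the intrinsic formula $(\alpha,\beta)\mapsto\shendo[\stke_Y](\beta)^{[\lambda/2]}\tens\hom[\stke_Y](\alpha,\beta)\tens\shendo[\stke_Y](\alpha)^{[\lambda/2]}$ given just before the theorem; your gluing argument via Proposition~\ref{pro:fadp} treats automorphisms of a fixed $\she_\dTM$, and to make it a complete substitute you would also need to observe that the gluing isomorphisms between different local $\she$-algebras are filtered, hence send $\she_j^{[\lambda]}$ to $\she_i^{[\lambda]}$. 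Second, your $\Phi$ is not a functor of stacks on $Y$, since $\mu_1(L^*)$ is only defined for $L$ living on a $\gamma$-saturated open; this is harmless because only $\oim\gamma\Phi$ is used (sections of direct images are taken over saturated opens), but $\Phi$ and $\stke_Y^\flat$ should be defined at that level. Third, ``$\shp$ is locally the unit bimodule $\stke_X$'' is too strong --- Picard goodness gives $\shp\simeq{}_f\stke_X$ locally, i.e.\ local freeness of rank one as a one-sided module --- but that weaker statement already suffices for the unit $\shp\to\oim\gamma\opb\gamma\shp$ to be an isomorphism, since the question only concerns the underlying sheaf.
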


\begin{proof}
(a) A priori, $\oim\gamma(L \tens[\C] \stke_Y^{\mu_1(L^*)})$ is an object of
$\stkMod(\stke_X^e)$. It is locally, hence globally, invertible with inverse given by $\oim\gamma(L^* \tens[\C] \stke_Y^{\mu_1(L)})$.

(b) The sheaf $\C_Y$ is sent to $\stke_X$, since $\oim\gamma(\stke_Y)\simeq\stke_X$  as $\stke_X^e$-modules. Moreover, the natural morphism
$$
\oim\gamma(L \tens[\C] \stke_Y^{\mu_1(L^*)}) \tens[\stke_X] \oim\gamma(L'
\tens[\C] \stke_Y^{\mu_1(L'^*)}) \to \oim\gamma(L \tens[\C] L'^* \tens[\C] \stke_Y^{\mu_1(L^*) + \mu_1(L'^*)})
$$
is locally, hence globally, an isomorphism. Hence~\eqref{eq:E-inv} is monoidal.

(c) For an invertible $\stke_X^e$-module $\shp$, define its exponent as the unique locally 
constant $\C/\Z$-valued function $\epsilon(\shp)$ on $X$ such that $\opb \gamma \shp$ is locally isomorphic to $\stke_Y^{\epsilon(\shp)}$ (this is well-defined by Theorem~\ref{thm:Eeloc}.). Then $\epsilon(\oim\gamma(L\tens[\C] \stke_Y^{\mu_1(L^*)}))=\mu_1(L^*)$, and by using Lemma~\ref{lem:Elm} one gets that the functor 
$$
\shp\mapsto \hom[\stke_Y^e](\stke_Y^{\epsilon(\shp)}, \opb \gamma \shp)
$$
is a quasi-inverse of~\eqref{eq:E-inv}.
\end{proof}

Let $\mathrm{Pic}(\stke_X)$ denote the set of isomorphism class of invertible 
$\stke_X^e$-modules, endowed with the group structure induced by $\tens[\stke_X]$. 

\begin{corollary}\label{cor:class} There is a group isomorphism 
$\mathrm{Pic}(\stke_X)\simeq H^1(Y; \C_Y^\times).$
\end{corollary}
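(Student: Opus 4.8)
The plan is to obtain the isomorphism simply by passing to isomorphism classes of global objects in the equivalence of stacks of $2$-groups provided by Theorem~\ref{thm:gammainv}. Concretely, $\mathrm{Pic}(\stke_X^e)$ is by definition the set of isomorphism classes of global objects of the stack of $2$-groups $\stkInv(\stke_X^e)$, with group law induced by $\tens[\stke_X]$; this is what one would call its degree-$0$ cohomology $H^0(X;\stkInv(\stke_X^e))$. First I would record that a global functor of stacks of $2$-groups which is an equivalence induces a group isomorphism at the level of such degree-$0$ cohomology (this is the degree-$0$ counterpart of the functoriality already invoked for $H^1$ in Section~\ref{se:nonab}, and is elementary: it amounts to the fact that a global equivalence of stacks induces an equivalence of categories of global objects, compatible with the monoidal structures).

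Next I would apply this to the equivalence $\oim\gamma\stkInv(\C_Y) \equito \stkInv(\stke_X^e)$ of Theorem~\ref{thm:gammainv}, together with the identity $H^0(X;\oim\gamma\stkd) = H^0(Y;\stkd)$ for any stack of $2$-groups $\stkd$ on $Y$ (immediate from the definition of the stack-theoretical direct image, whose global objects over $X$ are precisely the global objects of $\stkd$ over $Y$). This gives a group isomorphism $\mathrm{Pic}(\stke_X^e)\simeq H^0(Y;\stkInv(\C_Y))$. Finally, using the equivalence of stacks of $2$-groups $\stkInv(\C_Y)\equi[]\C_Y^\times[1]$ coming from~\eqref{eq:TorsInv} with $\shr=\C_Y$, the right-hand side becomes the group of isomorphism classes of $\C_Y^\times$-torsors on $Y$ under tensor product, i.e.~$H^1(Y;\C_Y^\times)$; this last identification is the degree-$0$ analogue of~\eqref{eq:HiG}, namely $H^0(Y;\C_Y^\times[1])\simeq H^1(Y;\C_Y^\times)$.

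Chaining these isomorphisms yields the assertion. There is no real difficulty here beyond bookkeeping: the only point requiring a word of care is that the paper spells out the functoriality of non commutative cohomology and the comparison~\eqref{eq:HiG} only in degree $1$, so I would make explicit the (completely parallel) degree-$0$ versions used above, and check that every arrow in the chain respects the monoidal — hence group — structures, which it does since every functor involved is monoidal. Alternatively, one could bypass degree-$0$ cohomology altogether and argue directly: an invertible $\stke_X^e$-module on $X$ corresponds under Theorem~\ref{thm:gammainv} to a $\C_Y^\times$-torsor on $Y$, isomorphic modules correspond to isomorphic torsors, and $\tens[\stke_X]$ corresponds to the contracted product of torsors, giving exactly the group $H^1(Y;\C_Y^\times)$.
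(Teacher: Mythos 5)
Your proposal is correct and follows exactly the route the paper intends: the corollary is stated as an immediate consequence of Theorem~\ref{thm:gammainv}, obtained by passing to isomorphism classes of global objects in that monoidal equivalence, using that global objects of $\oim\gamma\stkInv(\C_Y)$ over $X$ are global objects of $\stkInv(\C_Y)$ over $Y$, and identifying isomorphism classes of rank-one local systems on $Y$ with $H^1(Y;\C_Y^\times)$. Your explicit attention to the degree-$0$ functoriality and to compatibility of the monoidal structures only makes precise what the paper leaves implicit.
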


\begin{theorem}\label{thm:class}
The set of $\C$-equivalence classes (resp. Morita classes) of $\she$-algebroids is canonically isomorphic, as a pointed set, to $H^2(Y;\C^\times_Y)$.
\end{theorem}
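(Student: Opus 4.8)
The plan is to identify the pointed set of $\C$-equivalence classes of $\she$-algebroids with a non commutative cohomology set on $X$, and then to compute that set by transporting the structure $2$-group to $Y$. Since an $\she$-algebroid on $X$ is exactly a $\C$-stack locally $\C$-equivalent to $\stke_X$, the $\C$-linear version of Corollary~\ref{cor:h1aut} (already invoked before \S\ref{se:gamma}) gives a bijection of pointed sets between the set of $\C$-equivalence classes of $\she$-algebroids, pointed by $[\stke_X]$, and $H^1(X;\stkAut[\C](\stke_X))$, pointed by the trivial cocycle. The parenthetical ``resp.\ Morita classes'' is then disposed of by Theorem~\ref{thm:Emoritatrivial}~(i): two $\she$-algebroids are $\C$-equivalent precisely when they are Morita equivalent, so the two pointed sets in the statement coincide and it suffices to treat $\C$-equivalence.

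Next I would replace the structure $2$-group by a strictly commutative one living on $Y$. Recall $\stkAut[\C](\stke_X)\equi[]\stkInv(\stke_X^e)_\op$. By Theorem~\ref{thm:gammainv} there is an equivalence of stacks of $2$-groups $\oim\gamma\stkInv(\C_Y)\equi[]\stkInv(\stke_X^e)$, while $\stkInv(\C_Y)\equi[]\C_Y^\times[1]$ by~\eqref{eq:TorsInv}; the latter $2$-group is strictly commutative, so reversing its monoidal structure changes nothing and $\stkAut[\C](\stke_X)\equi[]\oim\gamma(\C_Y^\times[1])$. Since $H^1(X;\dummy)$ with values in a stack of $2$-groups depends only on the $2$-group up to equivalence, this yields
\[
H^1(X;\stkAut[\C](\stke_X))\simeq H^1(X;\oim\gamma(\C_Y^\times[1])).
\]

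Then I would compute the right-hand side with the tools of \S\ref{se:nonab}. By Lemma~\ref{lem:roimgamma} applied with $M=\C^\times$, the complex $\roim\gamma\C^\times_Y$ is concentrated in degrees $[0,1]$, hence $(\roim\gamma\C^\times_Y)[1]\in\catd^{[-1,0]}(\Z_X)$ coincides with its own $\tau_{\leq 0}$-truncation. So~\eqref{eq:roim} gives $\oim\gamma(\C^\times_Y[1])\equi[][(\roim\gamma\C^\times_Y)[1]]$, and then~\eqref{eq:h1commcross} together with the Grothendieck spectral sequence give
\[
H^1(X;[(\roim\gamma\C^\times_Y)[1]])=H^1(X;(\roim\gamma\C^\times_Y)[1])=H^2(X;\roim\gamma\C^\times_Y)=H^2(Y;\C^\times_Y).
\]
Chaining these isomorphisms produces the desired bijection; it sends the base point $[\stke_X]$ to $0$, because step~(b) in the proof of Theorem~\ref{thm:gammainv} shows that the unit $\C_Y$ of $\stkInv(\C_Y)$ is carried to $\stke_X$, so the trivial cocycle corresponds to $[\stke_X]$ on one side and to $0\in H^2(Y;\C^\times_Y)$ on the other.

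Given Theorem~\ref{thm:gammainv}, which carries the geometric content, the rest is bookkeeping, and the only point that really needs attention is the \emph{canonicity} of the final identification: each arrow in the chain --- the bijection of Corollary~\ref{cor:h1aut}, the equivalences of $2$-groups, the action of $\oim\gamma$ on cohomology, and the crossed-module identification~\eqref{eq:h1commcross} --- must be checked to be natural, so that the composite is independent of the auxiliary choices (the cover, the Darboux charts and the liftings $\tilde j_i$ used to build $\stke_X$, the local quantizations of contact transformations). This holds because $H^1$ is a functor on the $2$-category of stacks of $2$-groups and all the other constructions are functorial. As a byproduct, since $\stkInv(\C_Y)$ is strictly commutative, $H^1(X;\stkAut[\C](\stke_X))$ is an abelian group, so the set of $\C$-equivalence classes of $\she$-algebroids inherits a group structure rigidified by $[\stke_X]$, matching that of $H^2(Y;\C^\times_Y)$ --- which makes precise the sense in which the isomorphism of pointed sets is canonical.
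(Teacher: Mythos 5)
Your proposal is correct and follows essentially the same route as the paper: reduce to $H^1(X;\stkAut[\C](\stke_X))$ via Corollary~\ref{cor:h1aut} and Theorem~\ref{thm:Emoritatrivial}, pass to $\oim\gamma\stkInv(\C_Y)$ by Theorem~\ref{thm:gammainv} and~\eqref{eq:TorsInv}, and conclude with~\eqref{eq:roim} and~\eqref{eq:h1commcross}. The only cosmetic difference is that the paper removes the $(\cdot)_\op$ by the explicit functor $L\mapsto L^*$ rather than by invoking strict commutativity, and your remarks on base points and canonicity are a welcome elaboration of what the paper leaves implicit.
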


\begin{proof}
Since $\stke_X$ is Picard good,  by Theorem~\ref{thm:gammainv} and Proposition~\ref{pro:Out} there is 
an equivalence of stacks of 2-groups
$$
\stkAut[\C](\stke_X) \equi[] \oim\gamma\stkInv(\C_Y)_\op.
$$
The right-hand term is equivalent to $\oim\gamma\stkInv(\C_Y)$ by the functor $L\mapsto L^*$.
Since $\C_Y$ is Picard good, from~\eqref{eq:TorsInv} and by using~\eqref{eq:roim} one gets an equivalence of stacks of 2-groups
\[
\oim\gamma\stkInv(\C_Y)  \equi[] [\roim\gamma\C_Y^\times[1]].
\]
It then follows from \eqref{eq:h1commcross} that
\begin{equation}
\label{eq:MoritaClass}
H^1(X; \stkAut[\C](\stke_X)) \simeq H^2(Y; \C_Y^\times).
\end{equation}
\end{proof}

We end by giving a realization of isomorphism~\eqref{eq:MoritaClass}.

\medskip

First, let us explain how to twist $\stke_Y$ by a local system of rank one $L$ on $X$, obtaining a $\C$-algebroid  $\stke_Y^L$ on $Y$ locally $\C$-equivalent to $\stke_Y$.

Choose an open covering $\{U_i\}$ of $X$ in such a way that $L$ is represented by a 1-cocycle 
$\{[\lambda_{ij}]\}$ with values in $\C/\Z$.  Set $V_i=\opb \gamma (U_i)$ and consider the data 
$(\stke_{V_i},  (\cdot)\tens[\stke_{V_{ij}}]\stke_{V_{ij}}^{[\lambda_{ij}]}, \transfm_{ijk})$, where 
$\transfm_{ijk}$ denotes the invertible transformation induced by the canonical isomorphism of 
$\stke_{V_{ijk}}^e$-modules 
$$
\stke_{V_{ijk}}^{[\lambda_{ij}]}  \tens[\stke_{V_{ijk}}]\stke_{V_{ijk}}^{[\lambda_{jk}]}\isoto \stke_{V_{ijk}}^{[\lambda_{ik}]}. 
$$
Then $\stke_Y^L$ is the $\C$-stack on $Y$ obtained from these data by Proposition~\ref{pr:patch} (i). 
Note that $(\stke_Y^L)^\op\equi[\C]\stke_Y^{L^*}$ and $\stke_Y^L\equi[\C]\stke_Y$ if $L$ is trivial.

Denote by $L^\times$ the $\C_X^\times$-torsor associated to $L$ and recall from Lemma~\ref{lem:pi_0} that  $\pi_0(\oim \gamma \stke_Y^L)$ is endowed with a $\C_X^\times$-action.

\begin{lemma}\label{lem:iso-pi_0L}
$\pi_0(\oim \gamma \stke_Y^L)\simeq L^\times \times_{\C^\times}\pi_0(\oim \gamma\stke_Y)$ as  
$\C_X^\times$-sheaves.
\end{lemma}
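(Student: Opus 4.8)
The plan is to prove the isomorphism by comparing descent data for the two sheaves with respect to the cover $\{U_i\}$ of $X$ used to build $\stke_Y^L$. Writing $V_i=\opb\gamma(U_i)$, recall that by construction $\stke_Y^L$ is glued from the stacks $\stke_Y|_{V_i}=\stke_{V_i}$ along the transition functors $G_{ij}=(\cdot)\tens[\stke_{V_{ij}}]\stke_{V_{ij}}^{[\lambda_{ij}]}$ together with the transformations $\transfm_{ijk}$, by Proposition~\ref{pr:patch}~(i). Since $V_i=\opb\gamma(U_i)$, applying $\oim\gamma$ glues $(\oim\gamma\stke_Y)|_{U_i}$ along $\oim\gamma G_{ij}$, and applying the $0$-truncation $\pi_0$ turns this into honest gluing data for sheaves on $X$: the sheaf $\pi_0(\oim\gamma\stke_Y^L)$ is obtained by gluing the sheaves $\pi_0(\oim\gamma\stke_Y)|_{U_i}$ along the $1$-cocycle of automorphisms $\{\pi_0(\oim\gamma G_{ij})\}$, the cocycle relation coming from the $\transfm_{ijk}$. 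On the other hand, if $c_{ij}\in\C^\times$ denotes the image of $[\lambda_{ij}]$ under the identification $\C/\Z\simeq\C^\times$ fixed in \S\ref{se:gamma}, then $L^\times$ has transition cocycle $\{c_{ij}\}$, so $L^\times \times_{\C^\times}\pi_0(\oim\gamma\stke_Y)$ is glued from the very same local pieces along $\{c_{ij}\cdot(\cdot)\}$, where $c_{ij}\cdot(\cdot)$ is the automorphism given by the $\C_X^\times$-action of Lemma~\ref{lem:pi_0}. So the whole point is to check that $\pi_0(\oim\gamma G_{ij})$ is multiplication by $c_{ij}$.

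This is a local statement on $X$, and it reduces to the following assertion, which is the only genuinely microdifferential ingredient: for a locally constant $\C/\Z$-valued function $[\lambda]$ on a small open $U'\subset X$ and $c=e^{\pm 2\pi i\lambda}\in\C^\times$, the auto-equivalence $(\cdot)\tens[\stke_Y]\stke_Y^{[\lambda]}$ of $\stke_Y|_{\opb\gamma(U')}$ induces on $\pi_0(\oim\gamma\stke_Y)|_{U'}$ precisely the $\C_X^\times$-action of $c$; equivalently, for every object $\shm$ of $\stke_Y$ over $\opb\gamma(U')$ there is an isomorphism $\shm\tens[\stke_Y]\stke_Y^{[\lambda]}\simeq L_c\tens[\C]\shm$ over $\opb\gamma(U')$, where $L_c$ is the rank one local system with monodromy $c$ along the fibres of $\gamma$ (this is the same as the $\C_X^\times$-action of $c$ by the very definition of that action in Lemma~\ref{lem:pi_0}, since $\mu_1(L_c)=c$). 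I would prove it exactly as in the proof of Theorem~\ref{thm:localEPicardGood}: shrinking $U'$ one may assume $\stke_Y|_{\opb\gamma(U')}\equi[\C]\astk\she$ for an $\she$-algebra $\she$, cover $\opb\gamma(U')\simeq U'\times\C^\times$ by two simply connected open subsets $W_1,W_2$ with $W_1\cap W_2=W^+\sqcup W^-$ and the loop around the fibre crossing first $W^+$ then $W^-$, and on each $W_a$ choose an invertible section $D_a$ of $\stke_Y^{[\lambda]}$ of order $\lambda$, a fractional power of an order one operator built from a branch of the logarithm on the simply connected set $W_a$. By \eqref{eq:Elloc} these trivialise $\stke_Y^{[\lambda]}|_{W_a}$ as right $\she$-modules, hence trivialise $\shm\tens[\stke_Y]\stke_Y^{[\lambda]}|_{W_a}\simeq\shm|_{W_a}$, and, $L_c$ being trivial on $W_a$, also $L_c\tens[\C]\shm|_{W_a}\simeq\shm|_{W_a}$. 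The resulting transition of $\shm\tens[\stke_Y]\stke_Y^{[\lambda]}$ differs from that of $\shm$ by left multiplication by the order zero operator $D_2^{-1}D_1$; since an order $\lambda$ symbol is homogeneous of degree $\lambda$, the two branches differ by $2\pi i$ exactly around the fibre, so $D_2^{-1}D_1$ is $1$ on $W^+$ and equals the central scalar $e^{\pm 2\pi i\lambda}=c$ on $W^-$. After a harmless rechoice of the local trivialisations this presents $\shm\tens[\stke_Y]\stke_Y^{[\lambda]}$ and $L_c\tens[\C]\shm$ as glued from $\shm|_{W_a}$ by cohomologous cocycles, hence isomorphic over $\opb\gamma(U')$. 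Pinning down the sign against \S\ref{se:gamma}, and checking that these isomorphisms are compatible with the $\transfm_{ijk}$ (they are functorial in $[\lambda]$, the fractional powers multiplying), is the routine but slightly delicate part; I expect the branch bookkeeping to be the main obstacle.

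Granting the assertion, both $\pi_0(\oim\gamma\stke_Y^L)$ and $L^\times \times_{\C^\times}\pi_0(\oim\gamma\stke_Y)$ are obtained by gluing the sheaves $\pi_0(\oim\gamma\stke_Y)|_{U_i}$ along one and the same $1$-cocycle $\{c_{ij}\cdot(\cdot)\}$, so they are canonically isomorphic. Finally, this isomorphism is $\C_X^\times$-equivariant: on each $U_i$ it is the identity of $\pi_0(\oim\gamma\stke_Y)|_{U_i}$, and since $\stke_Y^L|_{V_i}=\stke_Y|_{V_i}$ and $L^\times$ is trivial over $U_i$, both $\C_X^\times$-actions restrict there to the native action of Lemma~\ref{lem:pi_0} on $\pi_0(\oim\gamma\stke_Y)|_{U_i}$; hence the isomorphism intertwines them globally.
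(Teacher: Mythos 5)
Your proof is correct and takes essentially the same route as the paper: both arguments compare the gluing cocycles of the two sheaves over $\{U_i\}$ and reduce everything to the single assertion that $(\cdot)\tens[\stke_{V_{ij}}]\stke_{V_{ij}}^{[\lambda_{ij}]}$ induces multiplication by $e^{2\pi i\lambda_{ij}}$ on $\pi_0(\oim\gamma\stke_Y)|_{U_{ij}}$, which the paper justifies by a commutative diagram of stacks of $2$-groups and you verify by the explicit branch computation with $D^{\lambda}$ that underlies that diagram. The compatibility with the $\transfm_{ijk}$ that you flag as delicate is in fact automatic once $\pi_0$ is applied, since the mere existence of these invertible transformations forces the induced sheaf automorphisms to satisfy the cocycle identity on triple overlaps.
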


\begin{proof}
Let $\{[\lambda_{ij}]\}$ be a 1-cocycle with values in $\C/\Z$ representing $L$ on an open 
covering  $\{U_i\}$ of $X$. Then $\oim \gamma \stke_Y^L|_{U_i}\equi[\C]\oim \gamma \stke_Y|_{U_i}$ and the associated glueing $\C$-equivalences $\oim \gamma \stke_Y|_{U_{ij}}\to\oim \gamma \stke_Y|_{U_{ij}}$ are given by $(\cdot)\tens[\stke_{V_{ij}}]\stke_{V_{ij}}^{[\lambda_{ij}]}$, where $V_i=\opb \gamma(U_i)$. We thus get isomorphisms of  $\C^\times$-sheaves $\pi_0(\oim \gamma \stke_Y^L)|_{U_i}\simeq \pi_0(\oim \gamma\stke_Y)|_{U_i}$, with associated glueing automorphisms of
$\pi_0(\oim \gamma \stke_Y)|_{U_{ij}}$
given by multiplication by $e^{2\pi i \lambda_{ij}}$.
This follows from the commutative diagram of stacks of 2-groups
$$
\xymatrix{ \C/\Z_X[0] \ar[d]  \ar[r]^-{\simeq} & \C^\times_X [0] \ar[d]  \\
\oim\gamma\stkAut[\C](\stke_Y)   \ar[r]^-{\pi_0} & \shAut(\pi_0(\oim \gamma \stke_Y))[0],}$$
where the left-hand vertical arrow is the functor $[\lambda]\mapsto (\cdot)\tens[\stke_Y] \stke_Y^{[\lambda]}$ and the right-hand one is the $\C^\times$-action.
Hence $\pi_0(\oim \gamma \stke_Y^L)$ is isomorphic to $\pi_0(\oim \gamma \stke_Y)$ twisted by the 
$\C_X^\times$-torsor $L^\times$.
\end{proof}

Let $\stkt$ be an invertible $\C_Y$-algebroid. 
Following Proposition~\ref{pro:mu2}, we denote by $\mu_2(\stkt)$ the local system of rank one on $X$ associated to the $\C_X^\times$-torsor $\pi_0(\oim\gamma\stkt)$.

\begin{lemma}\label{lem:iso-pi_0T}
$\pi_0(\oim \gamma(\stkt\tens[\C]\stke_Y^{\mu_2(\stkt^\op)}))\simeq\pi_0(\oim \gamma\stke_Y)$ 
as $\C_X^\times$-sheaves.
\end{lemma}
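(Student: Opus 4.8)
The plan is to reduce the statement to a Cech computation on $X$, along the lines of the proofs of Lemma~\ref{lem:iso-pi_0L} and of the proposition identifying $\mu_2([\stkt])$ with the local system of rank one associated to $\pi_0(\oim\gamma\stkt)$. Write $L = \mu_2(\stkt^\op)$, so that $L^\times = \pi_0(\oim\gamma\stkt^\op)$ by the very definition of $\mu_2$. First I would choose an open covering $\{U_i\}$ of $X$, fine enough that $\stkt$ is described, via Proposition~\ref{pr:patch}~(i), by data $(\astk{\C_{V_i}},\ (\cdot)\tens[\C]M_{ji},\ \transfa_{ijk})$, with $V_i = \opb\gamma(U_i)$ and $M_{ji}$ local systems of rank one on $V_{ij}$. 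Arguing exactly as in the proof of that proposition, $\pi_0(\oim\gamma\stkt)$ is then represented by the $\C^\times$-valued cocycle $\{\mu_1(M_{ji})\}$; applying the same argument to $\stkt^\op$, whose glueing functors become $(\cdot)\tens[\C]M_{ji}^*$ once $(\astk{\C_{V_i}})^\op$ is identified with $\astk{\C_{V_i}}$, the torsor $L^\times$ is represented by $\{\mu_1(M_{ji}^*)\} = \{\mu_1(M_{ji})^{-1}\}$. Since $\stke_Y^L$ is defined, up to $\C$-equivalence, by any choice of $\C/\Z$-valued cocycle representing $L$, I may take for it the cocycle $\{[\lambda_{ij}]\}$ determined by $e^{2\pi i\lambda_{ij}} = \mu_1(M_{ji})^{-1}$, so that $\stke_Y^L$ is patched from $(\stke_{V_i},\ (\cdot)\tens[\stke_{V_{ij}}]\stke_{V_{ij}}^{[\lambda_{ij}]},\ \transfm_{ijk})$ on the same covering.

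Next I would unwind the tensor product. Tensoring the above data for $\stkt$ and for $\stke_Y^L$ and using the $\C$-equivalences $\astk{\C_{V_i}}\tens[\C]\stke_{V_i}\equi[\C]\stke_{V_i}$, the stack $\stkt\tens[\C]\stke_Y^L$ on $Y$ is patched from the $\stke_{V_i}$, with glueing $\C$-equivalences over $V_{ij}$ given, up to these identifications, by the composite of $(\cdot)\tens[\C]M_{ji}$ and $(\cdot)\tens[\stke_{V_{ij}}]\stke_{V_{ij}}^{[\lambda_{ij}]}$. Applying $\oim\gamma$ yields $\C$-equivalences $\oim\gamma(\stkt\tens[\C]\stke_Y^L)|_{U_i}\equi[\C]\oim\gamma\stke_Y|_{U_i}$, and hence isomorphisms of $\C^\times$-sheaves $\pi_0(\oim\gamma(\stkt\tens[\C]\stke_Y^L))|_{U_i}\simeq\pi_0(\oim\gamma\stke_Y)|_{U_i}$, whose glueing automorphisms over $U_{ij}$ I would compute as the product of the effects on $\pi_0(\oim\gamma\stke_Y)|_{U_{ij}}$ of the two glueing operations. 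Tensoring by $M_{ji}$ acts by multiplication by $\mu_1(M_{ji})$, by the $\C_X^\times$-action of Lemma~\ref{lem:pi_0}~(ii) together with the identification $\pi_0(\oim\gamma\astk{\C_Y})\simeq\C_X^\times$ of Lemma~\ref{lem:pi_0}~(i); tensoring by $\stke_{V_{ij}}^{[\lambda_{ij}]}$ acts by multiplication by $e^{2\pi i\lambda_{ij}}$, by the commutative square of stacks of $2$-groups used in the proof of Lemma~\ref{lem:iso-pi_0L}.

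Finally, the glueing automorphism over $U_{ij}$ is multiplication by $\mu_1(M_{ji})\cdot e^{2\pi i\lambda_{ij}} = \mu_1(M_{ji})\cdot\mu_1(M_{ji})^{-1} = 1$, so the local isomorphisms glue to a global isomorphism $\pi_0(\oim\gamma(\stkt\tens[\C]\stke_Y^{\mu_2(\stkt^\op)}))\simeq\pi_0(\oim\gamma\stke_Y)$ of $\C^\times$-sheaves, which is the assertion. The point deserving care, and the main potential obstacle, is the sign bookkeeping in the second paragraph: one must check that passing from $\stkt$ to $\stkt^\op$ really inverts the monodromy cocycle $\{\mu_1(M_{ji})\}$ (equivalently, that $\mu_1(M_{ji}^*) = \mu_1(M_{ji})^{-1}$), so that the twist contributed by $\stkt$ and the twist contributed by $\stke_Y^{\mu_2(\stkt^\op)}$ are genuinely inverse to one another rather than equal; the remaining steps are the patching calculus already carried out several times in this section.
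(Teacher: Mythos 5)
Your proof is correct, but it takes a genuinely different route from the paper's. You unwind everything at the \v{C}ech level: trivialize $\stkt$ on a cover pulled back from $X$, observe that the glueing of $\stkt\tens[\C]\stke_Y^{\mu_2(\stkt^\op)}$ contributes to $\pi_0(\oim\gamma(\cdot))$ the product of the two transition cocycles $\{\mu_1(M_{ji})\}$ and $\{e^{2\pi i\lambda_{ij}}\}=\{\mu_1(M_{ji})^{-1}\}$, and conclude by cancellation. The paper instead argues intrinsically: the functor \eqref{eq:dir-im} induces a morphism $\pi_0(\oim\gamma\stkt)\times\pi_0(\oim\gamma\stke_Y^{\mu_2(\stkt^\op)})\to\pi_0(\oim\gamma(\stkt\tens[\C]\stke_Y^{\mu_2(\stkt^\op)}))$ which is $\C^\times$-equivariant in each variable, hence factors through the contracted product; by Lemma~\ref{lem:iso-pi_0L} that contracted product is $\pi_0(\oim\gamma\stkt)\times_{\C^\times}\pi_0(\oim\gamma\stkt^\op)\times_{\C^\times}\pi_0(\oim\gamma\stke_Y)\simeq\pi_0(\oim\gamma\stke_Y)$, a torsor contracted against its opposite being trivial, and the resulting morphism is locally, hence globally, an isomorphism. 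The underlying cancellation is the same in both arguments, and your insistence on checking $\mu_1(M_{ji}^*)=\mu_1(M_{ji})^{-1}$ is indeed the crux; your version makes the sign bookkeeping fully explicit, while the paper's version buys a \emph{canonical} morphism, independent of the choice of cover and trivializations, which is what the subsequent corollary uses to transport the canonical global section of $\pi_0(\oim\gamma\stke_Y)$. If you want your cocycle-level isomorphism to serve that same purpose, you should add the (routine) verification that it does not depend on the choices made.
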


\begin{proof}
By using the functor~\eqref{eq:dir-im}, one gets a morphism 
$$
\pi_0(\oim \gamma\stkt) \times \pi_0(\oim\gamma \stke_Y^{\mu_2(\stkt^\op)})\to 
\pi_0(\oim \gamma(\stkt\tens[\C]\stke_Y^{\mu_2(\stkt^\op)}))
$$ 
which is $\C^\times$-equivariant on each term. Hence it factors through $\pi_0(\oim \gamma\stkt) \times_{\C^\times} \pi_0(\oim\gamma \stke_Y^{\mu_2(\stkt^\op)})$. By Lemma~\ref{lem:iso-pi_0T}, this is isomorphic to $\pi_0(\oim \gamma\stke_Y)$, since $\pi_0(\oim \gamma\stkt^\op)$ is isomorphic to the $\C_X^\times$-torsor  opposite to $\pi_0(\oim \gamma\stkt)$. It follows that we have a morphism
$$
\pi_0(\oim \gamma\stke_Y)\to \pi_0(\oim \gamma(\stkt\tens[\C]\stke_Y^{\mu_2(\stkt^\op)}))
$$
of $\C_X^\times$-sheaves, which is locally, hence globally, an isomorphism.
\end{proof}

\begin{corollary}
$\pi_0(\oim \gamma(\stkt\tens[\C]\stke_Y^{\mu_2(\stkt^\op)}))$ has a canonical global section.
\end{corollary}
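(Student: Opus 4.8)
The plan is to reduce, by means of Lemma~\ref{lem:iso-pi_0T}, to exhibiting a canonical global section of $\pi_0(\oim\gamma\stke_Y)$, and then to obtain the latter from the adjunction unit together with the fact that $\stke_X$ is an algebroid.

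In more detail: by Lemma~\ref{lem:iso-pi_0T} there is a canonical isomorphism of $\C^\times_X$-sheaves
\[
\pi_0\bigl(\oim\gamma(\stkt\tens[\C]\stke_Y^{\mu_2(\stkt^\op)})\bigr)\simeq\pi_0(\oim\gamma\stke_Y),
\]
so it is enough to produce a canonical global section of the right-hand side. Since $\stke_Y=\opb\gamma\stke_X$, the adjunction unit furnishes a functor of $\C$-stacks $\stke_X\to\oim\gamma\opb\gamma\stke_X=\oim\gamma\stke_Y$; applying the functor $\pi_0$ then yields a morphism of sheaves $\pi_0(\stke_X)\to\pi_0(\oim\gamma\stke_Y)$. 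As $\stke_X$ is an $\she$-algebroid, hence in particular a $\C$-algebroid, the sheaf $\pi_0(\stke_X)$ is a singleton and therefore has a unique, in particular canonical, global section. Its image under the morphism above is the desired canonical global section of $\pi_0(\oim\gamma\stke_Y)$, and transporting it back through the isomorphism of Lemma~\ref{lem:iso-pi_0T} gives a canonical global section of $\pi_0(\oim\gamma(\stkt\tens[\C]\stke_Y^{\mu_2(\stkt^\op)}))$.

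I do not expect a genuine obstacle here, the statement being a formal consequence of the preceding lemmas; the only point deserving attention is the word ``canonical''. The section of $\pi_0(\stke_X)$ is canonical because it is the unique section of a singleton sheaf, the adjunction unit is canonical by construction, and the isomorphism of Lemma~\ref{lem:iso-pi_0T} is canonical, being built from the natural transformation~\eqref{eq:dir-im}. Consequently the resulting global section is independent of the auxiliary choices (open cover, cocycle representing $\mu_2(\stkt^\op)$) made in the proofs of Lemmas~\ref{lem:iso-pi_0L} and~\ref{lem:iso-pi_0T}.
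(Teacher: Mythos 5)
Your argument is correct and coincides with the paper's own proof: both obtain the section from the adjunction functor $\stke_X\to\oim\gamma\stke_Y$, the fact that $\pi_0(\stke_X)$ is a singleton, and the isomorphism of Lemma~\ref{lem:iso-pi_0T}. Your extra remarks on why the construction is canonical are a welcome clarification but do not change the substance.
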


\begin{proof} 
The adjunction functor $\stke_X \to \oim \gamma \stke_Y$ defines a morphism 
$\pi_0(\stke_X) \to \pi_0(\oim \gamma \stke_Y)$. Since $\pi_0(\stke_X)$ is the singleton-valued constant sheaf, this gives 
a global section of $\pi_0(\oim \gamma \stke_Y)$, hence of $\pi_0(\oim \gamma(\stkt
\tens[\C]\stke_Y^{\mu_2(\stkt^\op)}))$ by Lemma~\ref{lem:iso-pi_0T}.
\end{proof}

Denote by  $can$ the canonical global section of $\pi_0(\oim \gamma(\stkt\tens[\C]\stke_Y^{\mu_2(\stkt^\op)}))$. Then the inverse of the isomorphism~\eqref{eq:MoritaClass} is realized as
\[
[\stkt] \mapsto [(\oim \gamma(\stkt\tens[\C]\stke_Y^{\mu_2(\stkt^\op)}))^{can}],
\]
where $[\, \cdot\, ]$ denotes the $\C$-equivalence class and we use the Notation~\ref{nt:subAlg}.

Assume that $\stkt = \opb\gamma\stks$, for $\stks$ an invertible $\C_X$-algebroid. Since $\mu_2(\opb\gamma\stks^\op)$ 
is trivial and $\stke_Y = \opb\gamma\stke_X$, the above isomorphism reduces to
\[
[\opb\gamma\stks] \mapsto [(\oim \gamma \opb\gamma(\stks\tens[\C]\stke_X))^{can}] = [\stks\tens[\C]\stke_X],
\] 
the latter being the class of the ``twist" of $\stke_X$ by $\stks$.

\begin{remark} Replacing $\stke_X$ by an $\she$-algebroid in the previous construction, one gets an action of $H^2(Y;\C^\times_Y)$ on the set of $\C$-equivalence classes (resp. Morita classes) of $\she$-algebroids. In such a way, the latter becomes an 
$H^2(Y;\C^\times_Y)$-torsor and the canonical isomorphism~\eqref{eq:MoritaClass} is obtained by choosing the $\C$-equivalence class of $\stke_X$ as base point.
\end{remark}

\appendix

\section{Cocycles}\label{se:coc}

For the reader's convenience, we recall here the descent conditions for stacks, and detail the case of algebroids. This is parallel to the case of gerbes. These results are well known and can be found for example in~\cite{Gir71,Bre94,Kas96} (see also~\cite{Kon01,DP04,PS04,DP05,BGNT07,Yek12}).

Let $X$ be a topological space (or a site), and $\shr$ a sheaf of commutative
rings on $X$.
If  $\shu=\{U_{i}\}_{i\in I}$ is an open cover of $X$, we set
$U_{ij} = U_{i} \cap U_{j}$, $U_{ijk} = U_{i} \cap U_{j} \cap U_{k}$ etc.
We use the notation $\bullet$ for the  horizontal composition of transformations.

\subsection{Glueing of stacks}\label{se:cocy} 
Let us recall here how to recover $\shr$-stacks, $\shr$-functors and transformations from collections of local data.

\begin{proposition}
\label{pr:patch}
Let $\shu = \{U_i\}_{i\in I}$ be an open cover of $X$.
\begin{itemize}
\item[(i)] 
Consider the descent datum $\triplet{\stkc_i}{\functf_{ij}}{\transfa_{ijk}}{ijk\in I}$,
that is, $\stkc_i$ are $\shr$-stacks on $U_i$, $\functf_{ij}\colon \stkc_j \to \stkc_i$
are $\shr$-equivalences on $U_{ij}$ and $\transfa_{ijk}\colon
\functf_{ik}\to\functf_{ij}\circ \functf_{jk}$ are invertible transformations on
$U_{ijk}$, such that
\begin{equation}
\label{eq:alpha}
\vcenter{\xymatrix@C=3em{ \functf_{ij} \circ \functf_{jk} \circ \functf_{kl} &
\functf_{ik} \circ \functf_{kl} \ar[l]^-{\transfa_{ijk}\bullet \id_{\functf_{kl}}} \\
\functf_{ij} \circ \functf_{jl} \ar[u]^{\id_{\functf_{ij}} \bullet \transfa_{jkl}} &
\functf_{il} \ar[u]_{\transfa_{ikl}} \ar[l]_{\transfa_{ijl}}
}} \qquad
\text{commutes.}
\end{equation}
Then there exists an $\shr$-stack $\stkc$ on $X$ endowed with
$\shr$-equivalences $\functf_i\colon \stkc|_{U_i}\to\stkc_i$ and invertible
transformations $\transfa_{ij} \colon \functf_i \to \functf_{ij} \circ
\functf_j$ on $U_{ij}$, such that
\[
\vcenter{\xymatrix@C=3em{ \functf_{ij} \circ \functf_{jk} \circ \functf_{k} &
\functf_{ij} \circ \functf_{j} \ar[l]^-{\id_{\functf_{ij}} \bullet \transfa_{jk}} \\
\functf_{ik} \circ \functf_{k} \ar[u]^{\transfa_{jkl} \bullet \id_{\functf_{k}}}&
\functf_{i} \ar[u]_{\transfa_{ij}} \ar[l]_{\transfa_{ik}} .
}} \qquad
\text{commutes.}
\]
The $\shr$-stack $\stkc$ is unique up to an $\shr$-equivalence unique up to a
unique invertible transformation.
\item[(ii)] 
Let $\stkc$ be as above, and let $\stkc'$ be associated with the descent datum \\
$\triplet{\stkc'_i}{\functf'_{ij}}{\transfa'_{ijk}}{ijk\in I}$. Consider the  descent datum
$\pair{\functg_i}{\transfb_{ij}}{ij\in I}$, that is, $\functg_{i}\colon
\stkc_i \to \stkc'_i$ are $\shr$-functors on $U_{i}$ and $\transfb_{ij}\colon
\functf'_{ij} \circ \functg_{j} \to \functg_{i} \circ \functf_{ij}$ are
invertible transformations on $U_{ij}$, such that
\begin{equation}
\label{eq:alpha2}
\vcenter{\xymatrix@C=3em{ \functg_{i} \circ \functf_{ij} \circ \functf_{jk} &
\functg_{i} \circ \functf_{ik} \ar[l]^-{\id_{\functg_{i}} \bullet \transfa_{ijk}} \\
\functf'_{ij} \circ \functg_{j} \circ \functf_{jk}
\ar[u]^{\transfb_{ij} \bullet \id_{\functf_{jk}}} &
\functf'_{ij} \circ \functf'_{jk} \circ \functg_{k}
\ar[l]_{\id_{\functf'_{ij}} \bullet \transfb_{jk}} & \functf'_{ik} \circ \functg_{k}
\ar[ul]_{\transfb_{ik}} \ar[l]_{\transfa'_{ijk} \bullet \id_{\functg_{k}}}
}} \qquad
\text{commutes.}
\end{equation}
Then there exists an $\shr$-functor $\functg\colon\stkc\to\stkc'$
endowed with invertible transformations $\transfb_{i} \colon \functf'_i \circ
\functg \to \functg_{i} \circ \functf_i$ on $U_{i}$, such that
\[ 
\vcenter{\xymatrix@C=3em{ \functg_{i} \circ \functf_{ij} \circ \functf_{j} &
\functg_{i} \circ \functf_{i} \ar[l]^-{\id_{\functg_{i}} \bullet \transfa_{ij}} \\
\functf'_{ij} \circ \functg_{j} \circ \functf_{j}
\ar[u]^{\transfb_{ij} \bullet \id_{\functf_{j}}} &
\functf'_{ij} \circ \functf'_{j} \circ \functg \ar[l]_{\id_{\functf'_{ij}} \bullet \transfb_{j}}
& \functf'_{i}\circ \functg \ar[ul]_{\transfb_{i}}
\ar[l]_{\transfa'_{ij} \bullet \id_{\functg}}
}} \qquad
\text{commutes.}
\] 
The $\shr$-functor $\functg\colon\stkc\to\stkc'$ is unique up to a unique
invertible transformation.
\item[(iii)] 
Let $\functg\colon\stkc\to\stkc'$ be as above, and let
$\functg'\colon\stkc\to\stkc'$ be the $\shr$-functor associated with the descent datum
$\pair{\functg'_i}{\transfb'_{ij}}{ij\in I}$. Consider the descent datum
$\uniple{\transfd_{i}}{i\in I}$, that is, $\transfd_{i}\colon
\functg_{i}\to\functg'_{i}$ are transformations on $U_{i}$ such that
\begin{equation}
\label{eq:alpha3}
\vcenter{\xymatrix{ \functg_{i} \circ \functf_{ij}
\ar[d]_{\transfd_{i} \bullet \id_{\functf_{ij}}} & \functf'_{ij} \circ \functg_{j}
\ar[l]^{\transfb_{ij}} \ar[d]^{\id_{\functf'_{ij}} \bullet \transfd_{j}} \\
\functg'_{i} \circ \functf_{ij} & \functf'_{ij} \circ \functg'_{j}
\ar[l]_{\transfb'_{ij}} 
}} \qquad
\text{commutes.}
\end{equation}
Then, there exists a unique transformation $\transfd\colon
\functg\to\functg'$ such that $\transfd|_{U_i} = \transfd_i$.
\end{itemize}
\end{proposition}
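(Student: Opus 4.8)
The plan is to prove all three parts by the same mechanism as descent for sheaves, lifted one categorical level, so that the genuine content is merely the organization of the coherence data. First I would dispose of uniqueness: two solutions $\stkc$ in (i) are identified by applying (ii) and (iii) to the identity glueing data $\functg_i=\id_{\stkc_i}$, $\transfb_{ij}=\id$; two solutions $\functg$ in (ii) are identified by applying (iii); and uniqueness in (iii) is immediate, a transformation being determined by its restrictions to a cover. So it remains to construct $\stkc$, $\functg$, $\transfd$ together with the transformations $\transfa_{ij}$, $\transfb_i$, and to check the commutative squares in the statement.

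For (i), I would let $\stkc$ be the stack of descent data: for $V\subset X$ open, an object $\objc$ of $\stkc(V)$ is a family $\objc=(\objc_i;\transfm_{ij})_{i,j\in I}$ with $\objc_i\in\stkc_i(V\cap U_i)$ and $\transfm_{ij}\colon\functf_{ij}(\objc_j)\isoto\objc_i$ invertible on $V\cap U_{ij}$, subject on $V\cap U_{ijk}$ to the cocycle identity $\transfm_{ik}=\transfm_{ij}\circ\functf_{ij}(\transfm_{jk})\circ\transfa_{ijk}(\objc_k)$; a morphism $\objc\to\objc'$ is a family $\phi_i\colon\objc_i\to\objc'_i$ commuting with the $\transfm$'s; restrictions are the evident ones. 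I would then verify that $\stkc$ is a separated prestack, using that $\shHom[\stkc](\objc,\objc')$ is the subpresheaf of $\prod_i\shHom[\stkc_i](\objc_i,\objc'_i)$ cut out by equalities of morphisms, hence a sheaf because each $\shHom[\stkc_i]$ is; and that $\stkc$ satisfies descent, which reduces to descent in the $\stkc_i$ (to glue the $\objc_i$-components) together with that sheaf property (to glue the $\transfm_{ij}$, the cocycle identities then gluing for free). Finally the projection $\functf_i\colon\stkc|_{U_i}\to\stkc_i$, $\objc\mapsto\objc_i$, is an equivalence: it is fully faithful because each $\functf_{ij}$ is, and essentially surjective because from $\objd\in\stkc_i(V)$ with $V\subset U_i$ (so that $V\cap U_j\subset U_{ij}$, where $\functf_{ij}$ lives) one manufactures a descent datum whose $j$-th component is a quasi-inverse image of $\objd$ under $\functf_{ij}$, glued via the $\transfa$'s --- and here \eqref{eq:alpha} is exactly what guarantees the cocycle identity. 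Taking $\transfa_{ij}:=(\transfm_{ij})^{-1}$, read as a transformation $\functf_i\Rightarrow\functf_{ij}\circ\functf_j$, the square required in (i) commutes by the very definition of a descent datum.

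For (ii), I would define $\functg\colon\stkc\to\stkc'$ on objects by $\functg(\objc)=(\functg_i(\objc_i);\transfm'_{ij})$, where $\transfm'_{ij}$ is the composite of $\transfb_{ij}(\objc_j)\colon\functf'_{ij}\functg_j(\objc_j)\to\functg_i\functf_{ij}(\objc_j)$ and $\functg_i(\transfm_{ij})\colon\functg_i\functf_{ij}(\objc_j)\to\functg_i(\objc_i)$, and on morphisms by $\phi_i\mapsto\functg_i(\phi_i)$; the cocycle condition \eqref{eq:alpha2} for the $\transfb$'s is precisely what is needed for $(\functg_i(\objc_i);\transfm'_{ij})$ to be a descent datum in $\stkc'$, and then $\transfb_i=\id$ makes the square in (ii) commute. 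For (iii), I would set $\transfd(\objc)=(\transfd_i(\objc_i))_i$; hypothesis \eqref{eq:alpha3} says precisely that this family commutes with the glueing isomorphisms, hence defines a morphism $\functg(\objc)\to\tilde\functg(\objc)$ in $\stkc'(V)$, and naturality in $\objc$ is immediate, so $\transfd$ is the unique transformation with $\transfd|_{U_i}=\transfd_i$. The main obstacle is not conceptual but the volume of coherence bookkeeping: compatibility of descent data with all restriction functors, the verification --- the one essential use of \eqref{eq:alpha} --- that the quasi-inverse built in (i) really lands among descent data, and the pentagon- and naturality-type identities for $\functg$, $\transfb_i$ and $\transfd$; each of these unwinds to a diagram chase that invokes one of \eqref{eq:alpha}, \eqref{eq:alpha2}, \eqref{eq:alpha3}.
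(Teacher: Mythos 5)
Your construction is correct and is precisely the standard descent-data realization of the glued stack that the paper leaves implicit (it cites Breen rather than giving a proof): indeed, the description of objects of $\stkc(X)$ used in the paper's proof of the module-cocycle proposition in Appendix A.3 is exactly your datum $(\objc_i;\transfm_{ij})$, with your cocycle identity $\transfm_{ik}=\transfm_{ij}\circ\functf_{ij}(\transfm_{jk})\circ\transfa_{ijk}(\objc_k)$ equivalent to the paper's $\morpha_{ij}\circ\functf_{ij}(\morpha_{jk})=\morpha_{ik}\circ\transfa_{ijk}^{-1}(\obja_k)$. Your reduction of the uniqueness statements to (ii) and (iii), and your identification of where \eqref{eq:alpha}, \eqref{eq:alpha2} and \eqref{eq:alpha3} enter, are all as they should be.
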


\begin{remark}\label{rk:ref}
Let $\shv = \{V_i\}_{i\in J}$ be open cover of $X$ finer than $\shu$, and choose a refinement map 
$\rho \colon J \to I$ (that is, $V_i \subset U_{\rho(i)}$ for any $i\in J$).
Let $D = \triplet{\stkc_i}{\functf_{ij}}{\transfa_{ijk}}{ijk\in I}$ be a descent datum defined on $\shu$ and set
$$
\tilde\stkc_i = \stkc_{\rho(i)}|_{V_{i}},  \quad \tilde\functf_{ij} = \functf_{\rho(i)\rho(j)}|_{V_{ij}} , \quad \tilde\transfa_{ijk} = \transfa_{\rho(i)\rho(j)\rho(k)}|_{V_{ijk}}.
$$
Then $\opb \rho D= \triplet{\tilde\stkc_i}{\tilde\functf_{ij}}{\tilde\transfa_{ijk}}{ijk\in J}$ is a descent datum on $\shv$ which defines an $\shr$-stack $\shr$-equivalent to that associated to $D$. 

Let  $\rho' \colon J \to I$ be another choice of a refinement map and $\opb {\rho'} D= \triplet{\tilde\stkc'_i}{\tilde\functf'_{ij}}{\tilde\transfa'_{ijk}}{ijk\in J}$ the associated descent datum on $\shv$. Set
$$
\functg_i = \functf_{\rho'(i)\rho(i)}|_{V_{i}} , \quad \transfb_{ij} = \opb {(\transfa_{\rho'(i)\rho'(j)\rho(i)}|_{V_{ij}}\bullet \id_{\tilde\functf_{ij}})} \circ (\id_{\tilde\functf'_{ij}} \bullet  \transfa_{\rho'(j)\rho(i)\rho(j)}|_{V_{ij}}).
$$
Then $\pair{\functg_i}{\transfb_{ij}}{ij\in J}$ is a descent datum as in (ii) and, since the $\functg_i$ are equivalences, it defines an $\shr$-equivalence between the $\shr$-stacks associated to $\opb \rho D$ and $\opb {\rho'}  D$.
\end{remark}

\subsection{Algebroid cocycles}\label{se:acocy}

We give here a description of $\shr$-algebroids and $\shr$-functors between them 
in terms of $\shr$-algebras and $\shr$-algebra morphisms.

\medskip
Let $\stka$ be an $\shr$-algebroid on $X$. By definition, there exists
an open cover $\{U_i\}_{i\in I}$ of $X$ such that $\stka|_{U_i}$ is non-empty.
For $\obja_i\in\stka(U_i)$ and $\sha_i = \shEnd[\stka](\obja_i)$, there are
$\shr$-equivalences $\functf_i\colon\stka|_{U_i} \to \astk{\sha_i}$. Choose
quasi-inverses $\qinv{\functf_i}$ and invertible transformations $\id \to
\qinv{\functf_j}\circ \functf_j$. Set $\functf_{ij} := \functf_i\circ \qinv{\functf_j}
\colon \astk{\sha_j} \to \astk{\sha_i}$ on $U_{ij}$. On $U_{ijk}$ there are
invertible transformations $\transfa_{ijk} \colon \functf_{ik} \to
\functf_{ij}\circ \functf_{jk}$ induced by $\id \to \qinv{\functf_j}\circ \functf_j$. On
$U_{ijkl}$ one checks that the diagram \eqref{eq:alpha} commutes. By
Proposition~\ref{pr:patch}~(i), the data
$\triplet{\stka_i}{\functf_{ij}}{\transfa_{ijk}}{i,j,k\in I}$ are enough to
reconstruct $\stka$, in the sense that the stack obtained by glueing these
data is $\shr$-equivalent to $\stka$.

The $\shr$-equivalence $\functf_{ij}\colon \astk{\sha_j} \to \astk{\sha_i}$ on
$U_{ij}$ is locally induced by $\shr$-algebra isomorphisms. There thus exist
an open cover $\{U_{ij}^\alpha\}_{\alpha\in A}$ of $U_{ij}$ such that
$\functf_{ij} = \astk{(f_{ij}^\alpha)}$ on $U_{ij}^\alpha$ for $f_{ij}^\alpha
\colon \sha_j \to \sha_i$ isomorphisms of $\shr$-algebras. On triple
intersections $U_{ijk}^{\alpha\beta\gamma} = U_{ij}^\alpha \cap U_{ik}^\beta
\cap U_{jk}^\gamma$, the invertible transformations $\transfa_{ijk} \colon
\astk{(f_{ik}^\beta)} \to \astk{(f_{ij}^\alpha f_{jk}^\gamma)}$ are given by
invertible sections $a_{ijk}^{\alpha\beta\gamma} \in
\sha_i(U_{ijk}^{\alpha\beta\gamma})$ such that $f_{ij}^\alpha f_{jk}^\gamma =
\ad(a_{ijk}^{\alpha\beta\gamma}) f_{ik}^\beta$. 
(Recall that we set $\ad(a)(b) = aba^{-1}$.)
On quadruple intersections
$U_{ijkl}^{\alpha\beta\gamma\delta\epsilon\varphi} =
U_{ijk}^{\alpha\beta\gamma} \cap U_{ijl}^{\alpha\delta\epsilon} \cap
U_{ikl}^{\beta\delta\varphi} \cap U_{jkl}^{\gamma\epsilon\varphi}$, the
commutative diagram \eqref{eq:alpha} is equivalent to the equality
$a_{ijk}^{\alpha\beta\gamma} a_{ikl}^{\beta\delta\varphi} =
f_{ij}^\alpha(a_{jkl}^{\gamma\varepsilon\varphi})
a_{ijl}^{\alpha\delta\varepsilon}$.

One can treat in the same manner $\shr$-functors and transformations. We
summarize the results in the next proposition. However, as indices of
hypercovers are quite cumbersome, we will not write them explicitly anymore.
Instead, we will assume that
\begin{equation}
\label{eq:hypercofinal}
     \text{open covers of $X$ are cofinal among hypercovers.}
\end{equation}
This is the case, for example, of paracompact spaces.

\begin{proposition}
\label{pr:glue}
Assume \eqref{eq:hypercofinal}.
Let $\{U_i\}_{i\in I}$ be a sufficiently fine open cover of $X$
\begin{itemize}
\item[(i)] 
Any $\shr$-algebroid $\stka$ is reconstructed from a non-abelian cocycle
$\triplet{\sha_i}{f_{ij}}{a_{ijk}}{i,j,k\in I}$, that is, $\sha_i$ are
$\shr$-algebras on $U_i$, $f_{ij}\colon \sha_j|_{U_{ij}} \to \sha_i|_{U_{ij}}$
are $\shr$-algebra isomorphisms and $a_{ijk} \in \sha_i(U_{ijk})$ are
invertible sections,
such that
\begin{equation}
\label{eq:2nonabcoc}
\begin{cases}
f_{ij}f_{jk} = \ad(a_{ijk})f_{ik}, &\text{in }
\shHom[\shr\text-\stack{Alg}_X](\sha_k, \sha_i)(U_{ijk}),
\\ a_{ijk} a_{ikl} = f_{ij}(a_{jkl}) a_{ijl}, &\text{in }
\sha_i(U_{ijkl}).
\end{cases}
\end{equation}
\item[(ii)] 
Let $\stka$ be as above, and let $\stka'$ be an $\shr$-algebroid
constructed from the non-abelian cocycle $\triplet{\sha'_i}{f'_{ij}}{a'_{ijk}}{i,j,k\in I}$.
Any $\shr$-functor $\functg\colon\stka\to\stka'$ is reconstructed from
a non-abelian cocycle $\pair{g_{i}}{b_{ij}}{i,j\in I}$, that is, $g_{i}\colon \sha_i \to \sha'_i$
are $\shr$-algebra morphisms and $b_{ij} \in \sha'_i(U_{ij})$ are invertible
sections,
such that
\begin{equation}
\label{eq:1nonabcoc}
\begin{cases}
g_i f_{ij}=\ad (b_{ij})f'_{ij}g_j, &\text{in
}\shHom[\shr\text-\stack{Alg}_X](\sha_j, \sha'_i)(U_{ij}),
\\ g_i(a_{ijk})b_{ik} = b_{ij}f'_{ij}(b_{jk})a'_{ijk}, &\text{in }
\sha'_i(U_{ijk}).
\end{cases}
\end{equation}
\item[(iii)] 
Let $\functg\colon\stka\to\stka'$ be as above, and let
$\functg'\colon\stka\to\stka'$ be constructed from the non-abelian cocycle
$\pair{g'_{i}}{b'_{ij}}{i,j\in I}$. Any transformation of $\shr$-functors
$\transfd\colon \functg\to\functg'$ is reconstructed from a non-abelian cocycle
$\uniple{d_{i}}{i\in I}$, that is, $d_i\in \sha'_i(U_i)$ are sections such that
\begin{equation}
\label{eq:0nonabcoc}
d_i b_{ij} = b'_{ij} f'_{ij}(d_j), \quad\text{in }
\sha'_i(U_{ij}).
\end{equation}
\end{itemize}
\end{proposition}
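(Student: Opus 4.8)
The plan is to transport the stack-theoretic gluing result Proposition~\ref{pr:patch} across the $2$-functor $\astk{(\dummy)}$, using that $\astk{(\dummy)}$ is faithful and locally full (Lemma~\ref{le:+locfull}) together with the explicit description~\eqref{eq:shbfunct} of transformations between functors of the form $\astk f$. Each of the three parts splits into a ``going'' step (the given algebra data produce stack data satisfying the hypotheses of Proposition~\ref{pr:patch}) and a ``coming back'' step (every $\shr$-algebroid, $\shr$-functor, or transformation arises this way on a sufficiently fine cover). Recall from~\eqref{eq:shbfunct} that a section $b\in\shb(U)$ defines a transformation $\astk f\to\astk{f'}$ exactly when $b\,f(a)=f'(a)\,b$ for all $a$, that is, when $f'=\ad(b)\circ f$; and that, since $\astk{(\dummy)}$ is a functor, $\functf_{ij}\circ\functf_{jk}=\astk{(f_{ij}f_{jk})}$.

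For (i), I would set $\stkc_i=\astk{\sha_i}$, $\functf_{ij}=\astk{f_{ij}}$, and interpret each invertible section $a_{ijk}\in\sha_i(U_{ijk})^\times$ as a transformation $\transfa_{ijk}\colon\functf_{ik}\to\functf_{ij}\circ\functf_{jk}$. By the criterion just recalled, the first equation in~\eqref{eq:2nonabcoc}, namely $f_{ij}f_{jk}=\ad(a_{ijk})f_{ik}$, is precisely the condition for $a_{ijk}$ to define such an invertible transformation. A diagram chase then identifies the second equation $a_{ijk}a_{ikl}=f_{ij}(a_{jkl})a_{ijl}$ with the commutativity of~\eqref{eq:alpha}: here one uses that whiskering a transformation given by a section $a$ on the right by some $\astk g$ leaves $a$ unchanged, whiskering on the left by $\astk h$ replaces $a$ by $h(a)$, and vertical composition is the product of sections; this is exactly what produces the $f_{ij}(a_{jkl})$ term. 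Proposition~\ref{pr:patch}~(i) then yields a stack $\stka$ locally $\shr$-equivalent to $\astk{\sha_i}$, hence an $\shr$-algebroid. Conversely, given an $\shr$-algebroid $\stka$ and a cover on which it is non-empty, one chooses objects $\obja_i$, sets $\sha_i=\shEnd[\stka](\obja_i)$, and gets equivalences $\functf_i\colon\stka|_{U_i}\to\astk{\sha_i}$; on a fine enough cover the composites $\functf_i\circ\qinv{\functf_j}$ are of the form $\astk{f_{ij}}$ by Lemma~\ref{le:+locfull}, and the canonical transformations $\transfa_{ijk}$ are given by sections $a_{ijk}$ via~\eqref{eq:shbfunct}, recovering~\eqref{eq:2nonabcoc}.

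Parts (ii) and (iii) follow the same pattern. For (ii) I would set $\functg_i=\astk{g_i}$ and read $b_{ij}$, via~\eqref{eq:shbfunct}, as an invertible transformation $\functf'_{ij}\circ\functg_j\to\functg_i\circ\functf_{ij}$; the first equation in~\eqref{eq:1nonabcoc} is exactly the condition $g_if_{ij}=\ad(b_{ij})f'_{ij}g_j$ making this well defined, while the second equation translates into the commutativity of~\eqref{eq:alpha2}, so Proposition~\ref{pr:patch}~(ii) produces $\functg$. For (iii), one reads each $d_i\in\sha'_i(U_i)$ as a transformation $\transfd_i\colon\functg_i\to\functg'_i$ (again through~\eqref{eq:shbfunct}) and checks that~\eqref{eq:0nonabcoc} is the commutativity of~\eqref{eq:alpha3}, so Proposition~\ref{pr:patch}~(iii) glues the $\transfd_i$ into a unique $\transfd$. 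In both converse directions, faithfulness and local fullness of $\astk{(\dummy)}$ ensure that every $\shr$-functor and every transformation between stacks of the form $\astk{\sha_i}$ is locally of the stated algebraic form.

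The main obstacle will be the bookkeeping needed to collapse the hypercover indices appearing in the preliminary discussion of \S\ref{se:acocy}: there the identifications $\functf_{ij}\simeq\astk{f_{ij}}$ only hold on a refinement $\{U_{ij}^\alpha\}$, producing cocycles indexed by $\alpha\beta\gamma\dots$, and it is only under the standing ``sufficiently fine cover'' hypothesis---that covers are cofinal among hypercovers, as for paracompact spaces---that these refinements are absorbed into the original cover, yielding the clean equations~\eqref{eq:2nonabcoc}--\eqref{eq:0nonabcoc}. The remaining effort is the routine but careful verification that each horizontal-composition diagram~\eqref{eq:alpha}, \eqref{eq:alpha2}, \eqref{eq:alpha3} unwinds, under the correspondence~\eqref{eq:shbfunct}, into the corresponding algebraic identity, keeping track of where each $\shr$-algebra morphism acts on the sections involved.
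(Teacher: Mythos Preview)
Your proposal is correct and matches the paper's approach: the paper's argument is the discussion immediately preceding the proposition in \S\ref{se:acocy}, which likewise starts from Proposition~\ref{pr:patch}, passes through $\astk{(\dummy)}$ using its local fullness to replace equivalences by algebra isomorphisms on a refinement, translates the transformations into sections via~\eqref{eq:shbfunct}, and then invokes the ``sufficiently fine cover'' hypothesis to suppress hypercover indices. You are simply more explicit in naming Lemma~\ref{le:+locfull} and in spelling out the whiskering rules that turn \eqref{eq:alpha}--\eqref{eq:alpha3} into \eqref{eq:2nonabcoc}--\eqref{eq:0nonabcoc}.
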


In particular, two non-abelian cocycles
\[
\triplet{\sha_i}{f_{ij}}{a_{ijk}}{i,j,k\in I}, \qquad
\triplet{\sha'_i}{f'_{ij}}{a'_{ijk}}{i,j,k\in I},
\]
are associated to $\shr$-equivalent $\shr$-algebroids if and only if, up to refinements, there exists a non-abelian cocycle
$\pair{g_{i}}{b_{ij}}{i,j\in I}$ satisfying \eqref{eq:1nonabcoc} with $g_i$
isomorphisms of $\shr$-algebras.

Viceversa, let $\stka$ be as in Proposition~\ref{pr:glue}~(i). For $i,j\in I$
let $\sha'_i$ be $\shr|_{U_i}$-algebras, $g_{i}\colon\sha_i\to\sha'_i$
isomorphisms of $\shr$-algebras and $b_{ij}\in\sha'_i(U_{ij})$ invertible
sections. Then, the non-abelian cocycle
$\triplet{\sha'_i}{f'_{ij}}{a'_{ijk}}{i,j,k\in I}$
defined by \eqref{eq:1nonabcoc} is associated to an
$\shr$-algebroid $\shr$-equivalent to $\stka$.

\begin{remark}
Let $\triplet{\sha_i}{f_{ij}}{a_{ijk}}{i,j,k\in I}$ be
a non-abelian cocycle associated to an
$\shr$-algebroid $\stka$. Then \eqref{eq:2nonabcoc} implies the relations
\[
f_{ii} = \ad(a_{iii}), \quad 
a_{iij} = a_{iii}, \quad
a_{ijj} = f_{ij}(a_{jji}),
\quad\text{ for any }i,j\in I.
\] 
Setting $\sha'_i = \sha_i$, $g_{i} = \id_{\sha_i}$, and $b_{ij} =
a_{iji}$ in \eqref{eq:1nonabcoc}, we get
\[ 
f'_{ij} = \ad(a_{iji}^{-1})f_{ij}, \quad a'_{ijk} = f'_{ij}(a_{jkj}^{-1}a_{jki}).
\] 
Then, the non-abelian cocycle $\triplet{\sha'_i}{f'_{ij}}{a'_{ijk}}{i,j,k\in I}$ is associated to an
$\shr$-algebroid $\shr$-equivalent to $\stka$, and it
satisfies the relations
\[ 
f'_{ii} = \id_{\sha_i}, \quad a'_{iij} = a'_{ijj} = 1,
\] 
of a normalized cocycle in the sense of~\cite{Bre94}. 
\end{remark}

\subsection{Module cocycles}\label{se:mcocy}

Let $\stka$ be the $\shr$-algebroid described over the open cover
$\{U_i\}_{i\in I}$ of $X$ by the non-abelian cocycle
$\triplet{\sha_i}{f_{ij}}{a_{ijk}}{i,j,k\in I}$. The stack of (left)
$\stka$-modules $\stkMod(\stka)$ is then described as in
Proposition~\ref{pr:patch}~(i) by the family
\[
\triplet{\stkMod(\sha_i)}{\stkMod(\astk{f_{ji}})}{\stkMod(a_{kji})}{i,j,k\in
I}
\] 
(note the inversion of indices due to the fact that $\stkMod(\dummy)$ is
contravariant).
By Morita theory, the functor $\stkMod(\astk{f_{ji}})={}_{f_{ji}}(\dummy)$ is isomorphic to $\shp_{ij}\tens[\sha_j](\dummy)$ for the invertible $\sha_i\tens[\shr]\sha_j^\op$-module $\shp_{ij}= {}_{f_{ji}}\sha_j$. We thus recover the description of twisted sheaves given in \cite{Kas08} (see also~\cite{Kas96,DP04,DS07}).

\begin{proposition}
Let $\stka$ be as above. An object of $\catMod(\stka)$ is described by a
family $\pair{\shm_i}{\varphi_{ij}}{i,j \in I}$, where
$\shm_i\in\catMod(\sha_i)$, and $\varphi_{ij}\in \Hom[\sha_i](
{}_{f_{ji}}\shm_j\vert_{U_{ij}} , \shm_{i} \vert_{U_{ij}})$ are isomorphisms,
such that for any $u\in\shm_k$ one has
\[ 
\varphi_{ij} ( \varphi_{jk} (u)) =
\varphi_{ik}(a_{kji}^{-1} u).
\]
\end{proposition}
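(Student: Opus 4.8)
The plan is to read the statement off the glueing datum for $\stkMod(\stka)$ recalled just above it. By the descent description of Proposition~\ref{pr:patch}~(i), giving a global object $\shm\in\catMod(\stka)$ amounts to giving objects $\shm_i\in\catMod(\sha_i)$ over $U_i$, together with isomorphisms on $U_{ij}$ relating $\shm_i$ to the image of $\shm_j$ under the glueing functor $\functf_{ij}=\stkMod(\astk{f_{ji}})$, subject to a coherence condition governed by the transformations $\transfa_{ijk}=\stkMod(a_{kji})$. The first step is to make both data concrete. By~\eqref{eq:2modalg}, $\functf_{ij}=\stkMod(\astk{f_{ji}})\colon\stkMod(\sha_j)\to\stkMod(\sha_i)$ is the functor $\shn\mapsto{}_{f_{ji}}\shn$ on objects and ${}_{f_{ji}}(\dummy)$ on morphisms; hence the glueing isomorphism, written in the direction pointing towards $\shm_i$, is an isomorphism $\varphi_{ij}\colon{}_{f_{ji}}\shm_j|_{U_{ij}}\isoto\shm_i|_{U_{ij}}$ of $\sha_i$-modules, exactly as in the statement. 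Likewise, since $f_{kj}f_{ji}=\ad(a_{kji})f_{ki}$ by~\eqref{eq:2nonabcoc}, the section $a_{kji}$ is, by~\eqref{eq:shbfunct}, the transformation $\astk{f_{ki}}\to\astk{f_{kj}}\circ\astk{f_{ji}}$, and so by~\eqref{eq:2modalg} its image $\transfa_{ijk}=\stkMod(a_{kji})$, evaluated on $\shm_k$, is the isomorphism of $\sha_i$-modules ${}_{f_{ki}}\shm_k\to{}_{f_{kj}f_{ji}}\shm_k=\functf_{ij}\functf_{jk}(\shm_k)$ given by left multiplication by $a_{kji}$.

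It then remains to evaluate on $\shm$ over $U_{ijk}$ the commutative square of Proposition~\ref{pr:patch}~(i). Writing $\transfa_{ij}$ also for the value on $\shm$ of the transformation $\transfa_{ij}\colon\functf_i\to\functf_{ij}\circ\functf_j$ of that proposition, so that $\varphi_{ij}=\transfa_{ij}^{-1}$, the square reads
\[
(\,\cdot\,a_{kji})\circ\varphi_{ik}^{-1}=({}_{f_{ji}}\varphi_{jk})^{-1}\circ\varphi_{ij}^{-1}
\qquad\text{as maps }\shm_i\longrightarrow{}_{f_{kj}f_{ji}}\shm_k .
\]
Passing to inverses, using $(\,\cdot\,a_{kji})^{-1}=(\,\cdot\,a_{kji}^{-1})$ and that ${}_{f_{ji}}(\dummy)$ is a functor, this becomes $\varphi_{ij}\circ({}_{f_{ji}}\varphi_{jk})=\varphi_{ik}\circ(\,\cdot\,a_{kji}^{-1})$, that is, $\varphi_{ij}\bigl({}_{f_{ji}}\varphi_{jk}(u_k)\bigr)=\varphi_{ik}(a_{kji}^{-1}u_k)$ for every $u_k\in\shm_k$, which is the displayed relation. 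Conversely, any family $(\shm_i,\varphi_{ij})$ subject to this relation reverses the above steps to produce a coherent glueing datum, hence, by Proposition~\ref{pr:patch}~(i), a global object of $\stkMod(\stka)$; the two assignments are mutually inverse up to isomorphism. I would also note that Proposition~\ref{pr:patch} is applied to the underlying non-linear glueing, but the $\shr$-linear structure of $\stkMod(\shr)$ is preserved throughout, so the resulting $\shm$ is automatically an $\shr$-functor $\stka\to\stkMod(\shr)$.

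The only genuinely delicate point is the index bookkeeping, in particular the appearance of $a_{kji}^{-1}$ rather than $a_{kji}$ and the order $kji$ of its subscripts. This is forced by the contravariance of the $2$-functor $\stkMod(\dummy)$, which reverses $1$-cells — so that $\functf_{ij}\circ\functf_{jk}$ is $\stkMod$ of $\astk{f_{kj}}\circ\astk{f_{ji}}$, and the relevant instance of~\eqref{eq:2nonabcoc} is $f_{kj}f_{ji}=\ad(a_{kji})f_{ki}$ — but keeps the direction of $2$-cells, together with the choice $\varphi_{ij}=\transfa_{ij}^{-1}$ made so that $\varphi_{ij}$ lands in $\shm_i$. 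Everything else is a routine unravelling of definitions.
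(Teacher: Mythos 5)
Your proposal is correct and follows essentially the same route as the paper: both reduce the statement to the general description of objects of a stack glued via Proposition~\ref{pr:patch}~(i) (a family $\shm_i$ with isomorphisms $\varphi_{ij}\colon\functf_{ij}(\shm_j)\to\shm_i$ satisfying $\varphi_{ij}\circ\functf_{ij}(\varphi_{jk})=\varphi_{ik}\circ\transfa_{ijk}^{-1}(\shm_k)$) and then specialize to $\stkMod(\stka)$ using \eqref{eq:2modalg} and the contravariance of $\stkMod(\dummy)$, which accounts for the index reversal and the appearance of $a_{kji}^{-1}$. The paper merely states the descent description of objects and leaves the specialization implicit, whereas you carry out the identification of $\functf_{ij}$ and $\transfa_{ijk}$ and the evaluation of the coherence square explicitly; the content is the same.
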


\begin{proof}
Let $\stkc$ be an $\shr$-stack as in Proposition~\ref{pr:patch}~(i). The
statement follows by noticing that objects of $\stkc(X)$ are described by data
\[
\pair{\obja_i}{\morpha_{ij}}{i,j\in I},
\] 
where $\obja_i\in\stkc_i(U_i)$, and $\morpha_{ij}\colon
\functf_{ij}(\obja_j)\to \obja_i$ are isomorphisms in $\stkc_i(U_{ij})$, such
that
\[ 
\morpha_{ij} \circ \functf_{ij}(\morpha_{jk}) = \morpha_{ik} \circ
\transfa_{ijk}^{-1}(\obja_k)
\] 
as isomorphisms $\functf_{ij}\functf_{jk}(\obja_k) \isoto \obja_i$ in
$\stkc_i(U_{ijk})$.
\end{proof}

\end{document}